\documentclass[a4paper,10pt]{article}
\usepackage[utf8]{inputenc}

\usepackage{wrapfig}
\usepackage{amsmath} 
\usepackage{amsthm} 
\usepackage{amssymb} 
\usepackage{enumerate} 
\usepackage{esint} 
\usepackage{pgf,tikz} 
\usetikzlibrary{arrows} 
 \usepackage{yfonts} 
 \usepackage{mathrsfs} 
 \usepackage{graphicx}
\usepackage{caption}
\usepackage{subcaption}
\usepackage{mathtools} 


\textwidth15cm
\textheight21cm
\evensidemargin.2cm
\oddsidemargin.2cm

\addtolength{\headheight}{5.2pt}

\definecolor{ffffff}{rgb}{1.0,1.0,1.0}
\definecolor{qqqqff}{rgb}{0.0,0.0,1.0}
\definecolor{ffqqqq}{rgb}{1.0,0.0,0.0}
\definecolor{zzzzqq}{rgb}{0.6,0.6,0.0}

\newcommand*\circled[1]{\tikz[baseline=(char.base)]{
            \node[shape=circle,draw,inner sep=2pt] (char) {#1};}}

\newcommand*\squared[1]{\tikz[baseline=(char.base)]{
            \node[shape=rectangle,draw,inner sep=2.4pt] (char) {#1}; \node[shape=rectangle,draw,inner sep=1pt] (char) {#1};}}

\newcommand{\C}{{\mathbb C}}       
\newcommand{\R}{{\mathbb R}}       
\newcommand{\N}{{\mathbb N}}       
\newcommand{\Z}{{\mathbb Z}}       
\newcommand{\DDD}{{\mathbb D}}
\newcommand{\diam}{{\rm diam}}
\newcommand{\dist}{{\rm dist}}
\newcommand{\Dist}{{\rm D}}
\newcommand{\Sh}{{\mathbf {Sh}}} 
\newcommand{\SH}{{\mathbf {SH}}} 
\newcommand{\real}{{\rm Re \,}}
\newcommand{\imag}{{\rm Im}}
\newcommand{\rf}[1]{{(\ref{#1})}}
\newcommand{\supp}{{\rm supp}}

\newcommand{\Beurling}{{\mathcal B}}
\newcommand{\Cauchy}{{\mathcal C}}

%

\newcommand{\norm}[1]{{\left\| {#1} \right\|}}

%



%
\newtheorem{theorem}{Theorem}
\newtheorem*{theorem*}{Theorem}
\newtheorem{lemma}[theorem]{Lemma}
\newtheorem{claim}[theorem]{Claim}

\newtheorem*{corollary*}{Corollary}
\newtheorem{proposition}[theorem]{Proposition}
\newtheorem{definition}[theorem]{Definition}

\newtheorem{example}[theorem]{Example}
\newtheorem{remark}[theorem]{Remark}

\numberwithin{subsection}{section}
\numberwithin{theorem}{section}
\numberwithin{equation}{section}
\numberwithin{figure}{section}

%

%

%
\usepackage[affil-it]{authblk}

\title{Sobolev regularity of quasiconformal mappings on domains}

\author{Mart\'i Prats
\thanks{MP (Departament de Ma\-te\-m\`a\-ti\-ques, Universitat Aut\`onoma de Bar\-ce\-lo\-na, Catalonia): \texttt{mprats@mat.uab.cat}.}}

\begin{document}
\maketitle
\bibliographystyle{alpha}

\begin{abstract} 
Consider a Lipschitz domain $\Omega$ and a measurable function $\mu$ supported in $\overline\Omega$ with $\left\|{\mu}\right\|_{L^\infty}<1$. Then the derivatives of a quasiconformal solution of the Beltrami equation $\overline{\partial} f =\mu\, \partial f$ inherit the Sobolev regularity $W^{n,p}(\Omega)$ of the Beltrami coefficient $\mu$ as long as $\Omega$ is regular enough. The condition obtained is that the outward unit normal vector $N$ of the boundary of the domain is in the trace space, that is, $N\in B^{n-1/p}_{p,p}(\partial\Omega)$. 
\end{abstract}

\section{Introduction}
Let $\mu\in L^\infty$ supported in a certain ball $B\subset \C$ with $\norm{\mu}_{L^\infty}<1$ and consider $K:=\frac{1+\norm{\mu}_{L^\infty}}{1-\norm{\mu}_{L^\infty}}$. We say that $f$ is a $K$-quasiregular solution to the Beltrami equation
\begin{equation}\label{eqBeltrami}
\overline{\partial} f =\mu\, \partial f
\end{equation}
with Beltrami coefficient $\mu$ if $f\in W^{1,2}_{loc}$, that is, if $f$ and $\nabla f$ are square integrable functions in any compact subset of $\C$, and $\overline{\partial} f (z)=\mu(z) \partial f(z)$ for almost every $z\in\C$. Such a function $f$ is said to be a $K$-quasiconformal mapping if it is a homeomorphism of the complex plane. If, moreover, $f(z)=z+\mathcal{O}(\frac1z)$ as $z\to\infty$, then we say that $f$ is the principal solution to \rf{eqBeltrami}. 

Given a compactly supported Beltrami coefficient $\mu$, the existence and uniqueness of the principal solution is granted by the measurable Riemann mapping Theorem (see \cite[Theorem 5.1.2]{AstalaIwaniecMartin}, for instance). 
A natural question is to what spaces $f$ belongs.  The goal of this paper is to prove the following theorem.
\begin{theorem}\label{theoInvertBeltrami}
Let $n\in \N$, let $\Omega$ be a bounded Lipschitz domain with outward unit normal vector $N$ in $B^{n-1/p}_{p,p}(\partial\Omega)$ for some $2<p<\infty$ and let $\mu\in W^{n,p}(\Omega)$ with $\norm{\mu}_{L^\infty}<1$ and $\supp(\mu)\subset\overline{\Omega}$. Then, the principal solution  $f$ to \rf{eqBeltrami} is in the Sobolev space $W^{n+1,p}(\Omega)$.
\end{theorem}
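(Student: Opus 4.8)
The plan is to reduce everything to the Sobolev regularity of the single function $\omega:=\overline\partial f|_\Omega$. Since $\mu$ is supported in $\overline\Omega$, so is $\overline\partial f$, and the principal solution is represented by $f(z)=z+\Cauchy(\overline\partial f)(z)$, whence $\partial f=1+\Beurling(\overline\partial f)$ (both identities make sense because $\overline\partial f\in L^q_{loc}(\C)$ for some $q>2$ by Astala's higher integrability, and is compactly supported). Writing $\Beurling_\Omega\psi:=\Beurling(\chi_\Omega\psi)|_\Omega$ for the truncated transform, and using that $\overline\partial f=\chi_\Omega\overline\partial f$ a.e., we get $\partial f|_\Omega=1+\Beurling_\Omega\omega$; so $f\in W^{n+1,p}(\Omega)$ is equivalent to $\omega,\Beurling_\Omega\omega\in W^{n,p}(\Omega)$ (together with $f\in L^p(\Omega)$, which is automatic since $f$ is continuous and $\Omega$ bounded). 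Granting the key analytic input — that $\Beurling_\Omega$ is bounded on $W^{m,p}(\Omega)$ for $0\le m\le n$ precisely under the hypothesis $N\in B^{n-1/p}_{p,p}(\partial\Omega)$ — it therefore suffices to prove $\omega\in W^{n,p}(\Omega)$. The equation to exploit is the pointwise identity $\omega=\mu\,\partial f=\mu(1+\Beurling_\Omega\omega)$ on $\Omega$, i.e. $(I-\mu\,\Beurling_\Omega)\,\omega=\mu$ on $\Omega$, where $\mu\,\Beurling_\Omega$ denotes multiplication by $\mu$ composed with $\Beurling_\Omega$; note also $\omega\in L^2(\Omega)$ since $\omega\in L^q$, $q>2$, and $\Omega$ is bounded.

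The next step is to invert $I-\mu\,\Beurling_\Omega$. On $L^2(\Omega)$ it is a contraction perturbation of the identity, as $\norm{\Beurling_\Omega}_{L^2(\Omega)\to L^2(\Omega)}\le\norm{\Beurling}_{L^2(\C)\to L^2(\C)}=1$ and $\norm{\mu}_{L^\infty}<1$; hence it is invertible — in particular injective — on $L^2(\Omega)$, and consequently injective on $L^p(\Omega)$ for every $p\ge2$ (an $L^p$ solution of the homogeneous equation, extended by zero, lies in $L^2(\C)$ and so vanishes). On $W^{m,p}(\Omega)$ for $0\le m\le n$ and all $1<p<\infty$, I would argue that $I-\mu\,\Beurling_\Omega$ is a Fredholm operator of index zero: the boundedness of $\Beurling_\Omega$ on $W^{m,p}(\Omega)$ is available by hypothesis, and the commutators of $\Beurling_\Omega$ with multiplication by $\mu$ and by its derivatives are compact on $L^p(\Omega)$, because $\mu\in W^{n,p}(\Omega)\subset\mathrm{VMO}(\Omega)$ (extend $\mu$ to a VMO function on $\C$ and invoke Uchiyama's theorem for $[\Beurling,M_{\tilde\mu}]$, then restrict). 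Combined with the injectivity just established, this gives that $I-\mu\,\Beurling_\Omega$ is invertible on $W^{m,p}(\Omega)$ for $0\le m\le n$.

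Now I would induct on $n$. For $n=1$: $I-\mu\,\Beurling_\Omega$ is invertible on $W^{1,p}(\Omega)$, so $(I-\mu\,\Beurling_\Omega)v=\mu\in W^{1,p}(\Omega)$ has a unique solution $v\in W^{1,p}(\Omega)$; since $v\in L^2(\Omega)$ and $\omega\in L^2(\Omega)$ both solve the equation and $I-\mu\,\Beurling_\Omega$ is injective on $L^2(\Omega)$, we get $\omega=v\in W^{1,p}(\Omega)$. For $n\ge2$, assume $\omega\in W^{n-1,p}(\Omega)$; then $\Beurling_\Omega\omega\in W^{n-1,p}(\Omega)$ and, since $W^{n-1,p}(\Omega)$ is a Banach algebra ($p>2$), $\mu\,\Beurling_\Omega\omega\in W^{n-1,p}(\Omega)$, so we may apply $\partial^\alpha$ with $|\alpha|=n-1$ to $(I-\mu\,\Beurling_\Omega)\omega=\mu$. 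Using the Leibniz rule together with the commutation formula $\partial_j(\Beurling_\Omega\psi)=\Beurling_\Omega(\partial_j\psi)+\mathcal T_j\psi$ — where $\mathcal T_j$ is the boundary-potential operator arising from the surface-measure term, bounded on $W^{m,p}(\Omega)$ for $m\le n-1$ by the hypothesis on $N$ — and moving the top-order term $\mu\,\Beurling_\Omega(\partial^\alpha\omega)$ to the left, one obtains $(I-\mu\,\Beurling_\Omega)(\partial^\alpha\omega)=H_\alpha$, where $H_\alpha$ is $\partial^\alpha\mu$ plus a finite sum of products of a derivative of $\mu$ of order $\le n-1$ (hence in $W^{1,p}(\Omega)$) with $\Beurling_\Omega$ or $\mathcal T_j$ applied to a derivative of $\omega$ of order $\le n-2$ (hence in $W^{1,p}(\Omega)$); by the algebra property, $H_\alpha\in W^{1,p}(\Omega)$. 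Since $I-\mu\,\Beurling_\Omega$ is invertible on $W^{1,p}(\Omega)$ and injective on $L^p(\Omega)$, while $\partial^\alpha\omega\in L^p(\Omega)$ already solves this equation, we conclude $\partial^\alpha\omega\in W^{1,p}(\Omega)$; letting $\alpha$ run over $|\alpha|=n-1$ gives $\omega\in W^{n,p}(\Omega)$, and then $f\in W^{n+1,p}(\Omega)$ by the first paragraph.

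The main obstacle, and the technical heart of the matter, is the commutation formula together with the $W^{n,p}(\Omega)$-boundedness of $\Beurling_\Omega$ and of the boundary operators $\mathcal T_j$ and their iterates — that is, the reduction I used as a black box. The point is that $\partial_j(\chi_\Omega\psi)=\chi_\Omega\,\partial_j\psi-(\psi\,N_j)\,\sigma$ as distributions ($\sigma$ arclength on $\partial\Omega$, $\psi$ the trace), so $\partial_j\Beurling(\chi_\Omega\psi)=\Beurling(\chi_\Omega\partial_j\psi)-\Beurling((\psi N_j)\,\sigma)$, and the extra term restricted to $\Omega$ is controlled by the trace of $\psi$ on $\partial\Omega$ multiplied by $N$; iterating $n$ times brings in tangential derivatives of $N$ up to order $n-1$, and the space in which all of these must lie for uniform $L^p(\Omega)$-control of the entire hierarchy is exactly the trace space of $W^{n,p}(\Omega)$, namely $B^{n-1/p}_{p,p}(\partial\Omega)$ — which is why the condition is both sufficient and, essentially, sharp. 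Carrying this out on a merely Lipschitz domain — where one cannot freely commute derivatives past the singular kernel, must use a Whitney/Jones-type extension and deal with the limited regularity of the boundary parametrization, and must keep track of all lower-order boundary contributions — is where the real work lies, and I would expect it to occupy the preceding sections, with the proof of Theorem~\ref{theoInvertBeltrami} being the comparatively short assembly above. A lesser, but genuine, point is the rigorous justification of the differentiation of the nonlinear identity and of the Fredholm inversion (the a priori bound $\omega\in L^2(\Omega)$, the compactness of the commutators via extension and Uchiyama's theorem), but these are standard.
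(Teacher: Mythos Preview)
Your high-level reduction is the same as the paper's: write $h=\overline\partial f$, note that $h$ is supported in $\overline\Omega$ and solves $(I_\Omega-\mu\Beurling_\Omega)h=\mu$, and conclude by inverting $I_\Omega-\mu\Beurling_\Omega$ on $W^{n,p}(\Omega)$. The gap is precisely in that last step.

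Your Fredholm argument does not go through as written. Uchiyama's theorem gives compactness of $[\Beurling,M_{\tilde\mu}]$ on $L^p(\C)$, hence of $[\Beurling_\Omega,M_\mu]$ on $L^p(\Omega)$; it says nothing about compactness on $W^{n,p}(\Omega)$. More importantly, compactness of the commutator, on any space, does \emph{not} by itself make $I-\mu\Beurling_\Omega$ Fredholm: $\mu\Beurling_\Omega$ is neither small nor compact, and you have not produced an approximate two-sided inverse. The paper fills this gap via Iwaniec's scheme: one composes with $P_m=\sum_{k<m}(\mu\Beurling_\Omega)^k$ to reach $I_\Omega-(\mu\Beurling_\Omega)^m$ and then splits
\[
I_\Omega-(\mu\Beurling_\Omega)^m=\bigl(I_\Omega-\mu^m(\Beurling^m)_\Omega\bigr)+\mu^m\bigl((\Beurling^m)_\Omega-(\Beurling_\Omega)^m\bigr)+\bigl(\mu^m(\Beurling_\Omega)^m-(\mu\Beurling_\Omega)^m\bigr).
\]
The first piece is invertible for $m$ large, but only because one has a \emph{quantitative} bound $\norm{(\Beurling^m)_\Omega}_{W^{n,p}\to W^{n,p}}\le C_\epsilon m^{n+1}(1+\epsilon)^m$ (Theorem~\ref{theoGeometricPGtr2}); the third piece is compact because $[\mu,\Beurling_\Omega]$ is compact \emph{on $W^{n,p}(\Omega)$}, which the paper proves by showing it is smoothing, $W^{n,p}\to W^{n+1,p}$, when $\mu\in C^\infty_c$ (Lemma~\ref{lemCompactness}); and the second piece reduces to compactness of the ``reflection'' operators $\mathcal{R}_m f=\chi_\Omega\Beurling\bigl(\chi_{\Omega^c}\Beurling^{m-1}(\chi_\Omega f)\bigr)$ (Lemma~\ref{lemCompactnessDeath}), which is the hardest analytic step and has no analogue in your outline. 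This last operator encodes the interaction with $\Omega^c$ that you never see if you only look at $\Beurling_\Omega$ and its commutators with multiplication.

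Your inductive route via the commutation formula $\partial_j\Beurling_\Omega=\Beurling_\Omega\partial_j+\mathcal T_j$ is a reasonable idea, but it does not avoid the problem: it still rests on invertibility of $I-\mu\Beurling_\Omega$ on $W^{1,p}(\Omega)$, for which you have given no argument beyond the unproven Fredholm claim. Moreover, the operators $\mathcal T_j$ (Beurling transforms of boundary densities $\psi N_j\,d\sigma$ restricted to $\Omega$) are not available as black boxes under the hypothesis $N\in B^{n-1/p}_{p,p}(\partial\Omega)$; the companion paper establishes $W^{n,p}$-boundedness of the \emph{solid} operators $T^\gamma_\Omega$, not of these layer potentials, and iterating your formula would require controlling $n$-fold compositions of such terms. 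In short, the ``black box'' you invoke is a different (and not obviously equivalent) package from the one actually proven, and the part of the argument you describe as the ``comparatively short assembly'' is where the paper spends essentially all of its effort.
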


The principal solution can be given by means of the Cauchy and the Beurling transforms. For $g\in C^\infty_c$ its Cauchy transform is defined as
$$\Cauchy g(z):= \frac{1}{\pi}\int \frac{g(w)}{z-w}dm(w) \mbox{\quad\quad for all }z\in \C,$$
and its Beurling transform, as
$$\Beurling g(z):=\lim_{\varepsilon\to 0} \frac{-1}{\pi}\int_{|w-z|>\varepsilon} \frac{g(w)}{(z-w)^2}dm(w) \mbox{\quad\quad for almost every }z\in \C.$$
The Beurling transform is a bounded operator in $L^p$ for $1<p<\infty$ and for $g\in W^{1,p}(\C)$ we have that $\Beurling (\overline{\partial} g)=\partial g$. Given a ball $B$, the Cauchy transform sends functions in $L^p(B)$ and vanishing in the complement of $B$ to $W^{1,p}(\C)$ when $2<p<\infty$. Furthermore, the operator $I-\mu \Beurling$ is invertible in $L^2$ and, if we call 
\begin{equation*}
h:=(I-\mu \Beurling)^{-1} \mu,
\end{equation*}
then 
\begin{equation*}
f(z)= \Cauchy h(z) + z
\end{equation*}
is the principal solution of \rf{eqBeltrami} with $\overline\partial f= h$ and  $\partial f= \Beurling h +1$.

The key point to prove Theorem \ref{theoInvertBeltrami} is inverting the operator $(I-\mu \Beurling)$ in a suitable space. 
Astala showed in \cite{Astala} that $h\in L^p$ for $1+k<p<1+1/k$ (in fact, since $h$ is also compactly supported, one can say the same for every $1\leq p \leq 1+k$ even though $(I-\mu \Beurling)$ may not be invertible in $L^p$ for that values of $p$, as shown by Astala, Iwaniec and Saksman in \cite{AstalaIwaniecSaksman}). 
Clop et al. in \cite{ClopFaracoMateuOrobitgZhong} and Cruz, Mateu and Orobitg in \cite{CruzMateuOrobitg} proved that if $\mu$ belongs to the Sobolev space $W^{s,p}(\C)$ (in the Bessel potential sense when $s\notin\N$) with $sp>2$ then also $h\in W^{s,p}(\C)$. One also finds some results in the same spirit for the critical case $sp=2$ and the subcritical case $sp<2$ in  \cite{ClopFaracoMateuOrobitgZhong} and \cite{ClopFaracoRuiz}, but here the space to which $h$ belongs is slightly worse than the space to which $\mu$ belongs, that is, either some integrability or some smoothness is lost.

When it comes to dealing with a Lipschitz domain $\Omega$ with $\supp(\mu)\subset\overline\Omega$, Mateu, Orobitg and Verdera showed in \cite{MateuOrobitgVerdera} that, if the parameterizations of the boundary of $\Omega$ are in $C^{1,\varepsilon}$ with $0<\varepsilon<1$, then  for every $0<\sigma<\varepsilon$  one has that 
\begin{equation}\label{eqEpsilon}
\mu\in C^{0,\varepsilon}(\Omega) \implies  h\in C^{0,\sigma}(\Omega).
\end{equation}
Furthermore, the principal solution to \rf{eqBeltrami} is bilipschitz in that case. The authors allow $\Omega$ to be a finite union of disjoint domains with boundaries overlapping in sets of positive length. In \cite{CittiFerrari}, Giovanna Citti and Fausto Ferrari proved that, if one does not allow any overlapping at all, then \rf{eqEpsilon} holds for $\sigma=\varepsilon$. In \cite{CruzMateuOrobitg} the authors study also the Sobolev spaces to conclude that for the same kind of domains, when $0<\sigma<\varepsilon<1$ and $1<p<\infty$ with $\sigma p>2$ one has that
\begin{equation}\label{eqGap}
\mu \in W^{\sigma,p}(\Omega)\implies h\in W^{\sigma,p} (\Omega).
\end{equation}
A key point is proving the boundedness of the Beurling transform in $W^{\sigma,p}(\Omega)$. To do so, the authors note that $\Beurling\chi_\Omega \in W^{\sigma,p}(\Omega)$ by means of some results from \cite{MateuOrobitgVerdera} and then they prove a $T(1)$ theorem that grants the boundedness of $\Beurling$ in $W^{\sigma,p}(\Omega)$ if $\Beurling\chi_\Omega \in W^{\sigma,p}(\Omega)$. The other key point is the invertibility of $I-\mu \Beurling$ in $W^{\sigma,p}(\Omega)$, which is shown using Fredholm theory.

Cruz and Tolsa proved in \cite{CruzTolsa} that for $0<s\leq1$, $1<p<\infty$ with $sp>1$, if the outward unit normal vector $N$ is in the Besov space ${B}^{s-1/p}_{p,p}(\partial\Omega)$ then $\Beurling\chi_\Omega\in W^{s,p}(\Omega)$. This condition is necessary for Lipschitz domains with small Lipschitz constant (see \cite{TolsaSharp}). Moreover, the fact that $N\in B^{s-1/p}_{p,p}(\partial\Omega)$ implies the parameterizations of the boundary of $\Omega$ to be in $B^{s+1-1/p}_{p,p}$ and, for  $sp>2$, the parameterizations are in $C^{1+s-2/p}$ by a well-known embedding theorem. In that situation, one can use the $T(1)$ result in \cite{CruzMateuOrobitg} to deduce the boundedness of the Beurling transform in $W^{s,p}(\Omega)$. However, their result on quasiconformal mappings only allows to infer that  for every $2/p<\sigma<s-2/p$ we have that \rf{eqGap} holds.

Note that Theorem \ref{theoInvertBeltrami} only deals with the natural values of $s$, but the restrictions $\sigma<s-2/p$ and $s<1$ are eliminated.  For $n=1$ the author expects this to be a sharp result in view of \cite{TolsaSharp}. 

In \cite{PratsPlanarDomains} the author proved that the Beurling transform is bounded in $W^{n,p}(\Omega)$, reaching the following result:
\begin{theorem}\label{theoGeometricNaive}
Consider $p>2$, and $n\in \N$ and let $\Omega$ be a bounded Lipschitz domain with  $N\in B^{n-1/p}_{p,p}(\partial\Omega)$. Then, for every $f\in W^{n,p}(\Omega)$ we have that
\begin{equation*}
\norm{\Beurling(\chi_\Omega f)}_{W^{n,p}(\Omega)}\leq C \norm{N}_{B^{n-1/p}_{p,p}(\partial\Omega)}\norm{f}_{W^{n,p}(\Omega)},
\end{equation*}
where $C$ depends on $p$, $n$, $\diam(\Omega)$ and the Lipschitz character of the domain.
\end{theorem}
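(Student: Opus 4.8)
\section*{Proof proposal for Theorem \ref{theoGeometricNaive}}

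The plan is to push all $n$ derivatives onto $\chi_\Omega f$, using that the Beurling transform commutes with constant-coefficient differentiation, and then to read off the conclusion from the structure of the resulting boundary distributions. Write $g:=\Beurling(\chi_\Omega f)$; since $\Omega$ is bounded, $\chi_\Omega f\in L^p(\C)$ has compact support, so $g\in L^p(\C)$ with $\norm{g}_{L^p(\C)}\lesssim\norm{f}_{L^p(\Omega)}$, and it remains to show $\nabla^k g\in L^p(\Omega)$ for $0\le k\le n$ with the asserted bound; I describe the worst case $k=n$ (the lower orders carry fewer and less singular boundary terms and are strictly easier). Two preliminary observations: since $N\in B^{n-1/p}_{p,p}(\partial\Omega)$ and $p>2$, the boundary parametrisations are of class $C^{1+n-2/p}\subset C^{n}$ by the embedding recalled in the introduction, and $W^{n,p}(\Omega)\hookrightarrow C^{n-1}(\overline{\Omega})$; hence every trace $\mathrm{tr}(D^\beta f)$ with $|\beta|\le n-1$ is a genuine function, lying in $B^{n-|\beta|-1/p}_{p,p}(\partial\Omega)$ by the trace theorem (as $D^\beta f\in W^{n-|\beta|,p}(\Omega)$), and the tangential calculus on $\partial\Omega$ below is legitimate. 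Also $|N|=1$ a.e.\ forces $\norm{N}_{B^{n-1/p}_{p,p}(\partial\Omega)}\ge c(\diam\Omega,\text{Lipschitz character})>0$, so a constant depending on $\diam\Omega$ may be replaced by one proportional to $\norm{N}_{B^{n-1/p}_{p,p}(\partial\Omega)}$.

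Since $D^\alpha\Beurling=\Beurling D^\alpha$ on compactly supported distributions, $\nabla^n g=\Beurling(\nabla^n(\chi_\Omega f))$. Using $\partial_j\chi_\Omega=-N_j\sigma$, with $\sigma$ the arclength measure on $\partial\Omega$, and iterating the identity $\nabla(\chi_\Omega u)=\chi_\Omega\nabla u-\mathrm{tr}(u)\,N\sigma$ (valid for $u$ smooth up to $\partial\Omega$), while noting that each further gradient either stays on the bulk term, differentiates $\sigma$ transversally, or differentiates tangentially the densities already accumulated on $\partial\Omega$ (traces of derivatives of $f$, components of $N$, curvature factors), one arrives at
\[\nabla^n(\chi_\Omega f)=\chi_\Omega\,\nabla^n f\;+\;\sum_{m=0}^{n-1}\partial_\nu^{\,m}\big(a_m\,\sigma\big),\]
where each $a_m$ is a finite sum of products of a trace $\mathrm{tr}(D^\beta f)$ with $|\beta|\le n-1-m$ and a universal polynomial in the components of $N$ and their tangential derivatives of order at most $n-1-m-|\beta|$. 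In each such product I would single out one distinguished factor carrying the top smoothness --- either $\mathrm{tr}(D^{\,n-1-m}f)$, or, when no derivative lands on $f$, one (possibly differentiated) copy of $N$ --- and absorb the rest (finitely many copies of $N$ with few derivatives) as bounded multipliers; using the product rule $\norm{uv}_{B^s_{p,p}}\lesssim\norm{u}_{B^s_{p,p}}\norm{v}_{L^\infty}+\norm{u}_{L^\infty}\norm{v}_{B^s_{p,p}}$ together with $\norm{N}_{L^\infty}=1$ and induction, and the fact that $B^{n-1/p}_{p,p}(\partial\Omega)$ is an algebra because $np>2$, this yields, for every $m$,
\[\norm{a_m}_{B^{m+1-1/p}_{p,p}(\partial\Omega)}\;\lesssim\;\norm{N}_{B^{n-1/p}_{p,p}(\partial\Omega)}\,\norm{f}_{W^{n,p}(\Omega)},\]
\emph{linearly} in $\norm{N}$, where one also uses the trace estimate for $\mathrm{tr}(D^\beta f)$ and $n-|\beta|\ge m+1$.

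The bulk term is harmless, $\norm{\Beurling(\chi_\Omega\nabla^n f)}_{L^p(\C)}\lesssim\norm{\nabla^n f}_{L^p(\Omega)}$, so everything reduces to the following operator estimate --- the core of the proof and a higher-order relative of the Cruz--Tolsa theorem, whose $m=0$, $a=N_j$ case is exactly $\Beurling\chi_\Omega\in W^{1,p}(\Omega)$: for $0\le m\le n-1$ and a density $a$ on $\partial\Omega$,
\[\norm{\Beurling\big(\partial_\nu^{\,m}(a\,\sigma)\big)}_{L^p(\Omega)}\;\lesssim\;\norm{a}_{B^{m+1-1/p}_{p,p}(\partial\Omega)}.\]
Over a half-plane $H$ one sees both why this is the right statement and how to prove it: integrating by parts $m+1$ times converts $\Beurling(\partial_\nu^{\,m}(a\sigma))$ into the Beurling (Hilbert-transform-type) kernel on the bounding line acting on the $(m+1)$-st tangential derivative of $a$, a one-dimensional Calder\'on--Zygmund operator bounded on $L^p$ --- which is precisely why $a\in B^{m+1-1/p}_{p,p}(\partial\Omega)$ is natural, and which explains why no $\dist(\cdot,\partial\Omega)^{-m-1}$ blow-up occurs. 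For a general $\Omega$ I would run a Whitney decomposition $\mathcal W(\Omega)$: on cubes $Q$ with $\dist(Q,\partial\Omega)\gtrsim\ell(Q)$ the kernel $\nabla_z^m(z-\cdot)^{-2}$ against a mass on $\partial\Omega$ is smooth with decay $|z-w|^{-2-m}$, handled by dyadic-annulus sums; on cubes meeting $\partial\Omega$ one replaces the arc of $\partial\Omega$ near $Q$ by the bounding line of a best-fitting half-plane, the half-plane part being controlled as above and the error being governed by the Whitney-scale non-flatness of $\partial\Omega$ at $Q$, whose $\ell^p(\mathcal W(\Omega))$-sum is nothing but the finiteness of $\norm{N}_{B^{n-1/p}_{p,p}(\partial\Omega)}$ via the standard dyadic characterisation of that trace space (which dominates $B^{m+1-1/p}_{p,p}(\partial\Omega)$ since $n\ge m+1$). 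Combining the bound on $\norm{a_m}$ with this estimate and summing over $m$ --- and then over $0\le k\le n$, the lower orders being analogous --- yields $\norm{g}_{W^{n,p}(\Omega)}\lesssim\norm{N}_{B^{n-1/p}_{p,p}(\partial\Omega)}\norm{f}_{W^{n,p}(\Omega)}$.

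The main obstacle is the displayed operator estimate for $m\ge1$: the half-plane model is transparent, but transferring it to a general $\Omega$ demands making quantitative the passage from $N\in B^{n-1/p}_{p,p}(\partial\Omega)$ to $\ell^p$-summable Whitney-scale flatness, comparing $\Omega$ with half-planes scale by scale while keeping every error first order in $N$, and checking carefully that the double-layer-type contributions cancel down to $a^{(m+1)}$-size rather than blowing up near $\partial\Omega$. The supporting Besov product and algebra facts and the trace theorem on these Lipschitz domains (with $C^n$ boundary supplied by the hypothesis on $N$) are technical but routine. Alternatively one could organise everything as a $T(1)$/$T(P)$ statement --- it suffices to bound $\Beurling(\chi_\Omega P)$ in $W^{n,p}(\Omega)$ for every polynomial $P$ of degree $\le n-1$ --- but after the same Leibniz expansion this reduces to exactly the same family of boundary estimates.
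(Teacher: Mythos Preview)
The paper does not actually prove this theorem: it is quoted from the companion article \cite{PratsPlanarDomains} (and the sharper quantitative version used later is Theorem~\ref{theoGeometricPGtr2}, also quoted from that article). So there is no ``paper's own proof'' to compare against here; what follows is an assessment of your outline on its own merits and a brief comparison with the strategy one can infer from \cite{PratsPlanarDomains} via Theorem~\ref{theoGeometricPGtr2}.

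Your reduction is sound in spirit: commuting derivatives through $\Beurling$, peeling off a bulk term $\Beurling(\chi_\Omega\nabla^n f)$, and leaving boundary distributions is a natural route, and your observation that $|N|=1$ lets you keep the final estimate linear in $\norm{N}_{B^{n-1/p}_{p,p}}$ by pushing extra $N$-factors into $L^\infty$ is correct. However, the proposal has a genuine gap at exactly the point you flag as ``the main obstacle'': the displayed estimate
\[
\norm{\Beurling\big(\partial_\nu^{\,m}(a\,\sigma)\big)}_{L^p(\Omega)}\;\lesssim\;\norm{a}_{B^{m+1-1/p}_{p,p}(\partial\Omega)}
\]
is the entire content of the theorem once the algebra is done, and you only sketch it. Two concrete issues. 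First, on the half-plane you say $m{+}1$ integrations by parts convert $\Beurling(\partial_\nu^m(a\sigma))$ into a one-dimensional CZO acting on $a^{(m+1)}$; but $a\in B^{m+1-1/p}_{p,p}(\R)$ does not place $a^{(m+1)}$ in $L^p(\R)$ (it lies only in $B^{-1/p}_{p,p}$), so the model computation as written does not close --- you need to stop one integration short and use instead that $\partial_x^{m}\big((x-\cdot)^{-1}\big)$ against $a'$ (or an analogous fractional version) lands in $L^p$ of the upper half-plane, which is a genuine trace/extension statement rather than a CZO bound. Second, the passage from half-plane to $\Omega$ requires that the Whitney-scale ``non-flatness'' errors be summable in $\ell^p$ \emph{after} being hit by $\Beurling$ and by $m$ normal derivatives; asserting that this is ``nothing but'' the dyadic characterisation of $\norm{N}_{B^{n-1/p}_{p,p}}$ hides precisely the analysis that makes the theorem hard (for $m\ge 1$ the error terms involve higher derivatives of the local graph parametrisation and do not reduce to oscillation of $N$ alone without further work).

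By contrast, the approach in \cite{PratsPlanarDomains} (visible here through Theorem~\ref{theoGeometricPGtr2}) avoids distributions supported on $\partial\Omega$ altogether: it works with the family $T^\gamma_\Omega$, uses a $T(P)$-type reduction (as you mention at the end) together with approximating polynomials on a Whitney decomposition, and controls $T^\gamma_\Omega 1$ directly via line integrals over $\partial\Omega$ --- this is where $\norm{N}_{B^{n-1/p}_{p,p}}$ enters, through the parameterisations in $B^{n+1-1/p}_{p,p}$. That route sidesteps the need to make sense of $\partial_\nu^m\sigma$ and yields the quantitative $m$-dependence needed later in this paper, which your approach does not obviously provide.
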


In this paper we will face the invertibility of $(I-\mu \Beurling)(\chi_\Omega \cdot)$ in $W^{n,p}(\Omega)$. We will follow the scheme that Iwaniec used in \cite{Iwaniec} to show that $I-\mu \Beurling$ is invertible in every $L^p$ for $1<p<\infty$ when $\mu\in VMO$. That is, we will reduce the proof to the compactness of certain commutators. In our context, however, as it also happens in \cite{CruzMateuOrobitg}, we will have to deal with the compactness of the operator $\chi_\Omega \Beurling \left(\chi_{\Omega^c} \Beurling\left(\chi_\Omega\cdot \right)\right)$ as well. Their approach was based on a result in \cite{MateuOrobitgVerdera} that could be useful for the case $W^{\sigma,p}(\Omega)$ with $\sigma<n-2/p$ but it is not sufficiently strong to deal with the endpoint case $W^{n,p}(\Omega)$, so we present here a new approach which entangles some interesting nuances (see Section \ref{secTechnical}).

Let us stress  a crucial step in Iwaniec's scheme. We need to bound not only the Beurling transform but its iterates $\Beurling^m$ or, more precisely, we need the norm of  $\mu^m \Beurling^m(\chi_\Omega\cdot):W^{n,p}(\Omega)\to W^{n,p}(\Omega)$ to be small  for $m$ big enough. Thus, Theorem \ref{theoGeometricNaive} above is too naive, and we need a quantitative version. The reader may expect to find a bound with a polynomial behavior with respect to $m$, but the fact is that the author has not been able to get such an estimate. Instead, we will use an upper bound for the norm with exponential growth on $m$ but the base will be chosen as close to $1$ as desired, as shown in \cite[Theorem 3.15]{PratsPlanarDomains}. This will suffice to prove Theorem \ref{theoInvertBeltrami}.

The plan of the paper is the following. In Section \ref{secPreliminaries} some preliminary assumptions are stated. Subsection \ref{secNotation} explains the notation to be used and recalls some well-known facts. In Subsection \ref{secChainsAndPolynomials} one recalls some tools to be used in the proof of the main result. In Subsection \ref{secSpaces} the definition of the Besov spaces $B^s_{p,p}$ is given along with some well-known facts. Subsection \ref{secOperators} is about some operators related to the Beurling transform, providing a standard notation for the whole article, and recalling the precise results from \cite{PratsPlanarDomains} to be used.

Section \ref{secQuasiconformal} gives the proof of Theorem \ref{theoInvertBeltrami}. In Subsection \ref{secOutline} one finds the outline of the proof via Fredholm Theory, reducing it to the compactness of a commutator  which is proven in Subsection \ref{secCompactness} and the compactness of $\chi_\Omega \Beurling \left(\chi_{\Omega^c} \Beurling^m\left(\chi_\Omega\cdot \right)\right)$ which is studied in Subsection \ref{secCompactnessDeath}.  Finally, Subsection \ref{secTechnical} is devoted to establishing a generalization of the results in \cite{MateuOrobitgVerdera} to be used in the last subsection.

\section{Preliminaries}\label{secPreliminaries}
\subsection{Some notation and well-known facts}\label{secNotation}
{\bf On inequalities:} 
When comparing two quantities $x_1$ and $x_2$ that depend on some parameters $p_1,\dots, p_j$ we will write 
$$x_1\leq C_{p_{i_1},\dots, p_{i_j}} x_2$$
if the constant $C_{p_{i_1},\dots, p_{i_j}} $ depends on ${p_{i_1},\dots, p_{i_j}}$. We will also write $x_1\lesssim_{p_{i_1},\dots, p_{i_j}} x_2$ for short, or simply $x_1\lesssim  x_2$ if the dependence is clear from the context or if the constants are universal. We may omit some of these variables for the sake of simplicity. The notation $x_1 \approx_{p_{i_1},\dots, p_{i_j}} x_2$ will mean that $x_1 \lesssim_{p_{i_1},\dots, p_{i_j}} x_2$ and $x_2 \lesssim_{p_{i_1},\dots, p_{i_j}} x_1$.

{\bf On polynomials:}
We write $\mathcal{P}^n(\R^d)$ for the vector space of real polynomials of degree smaller or equal than $n$ with $d$ real variables. If it is clear from the context we will just write $\mathcal{P}^n$.

{\bf On sets:}
Given two sets $A$ and $B$, we define their long distance as
$$\Dist(A,B):=\diam(A)+\diam(B)+\dist(A,B).$$
Given $x\in \R^d$ and $r>0$, we write $B(x,r)$ or $B_r(x)$ for the open ball centered at $x$ with radius $r$ and $Q(x,r)$ for the open cube centered at $x$ with sides parallel to the axis and side-length $2r$. Given any cube $Q$, we write $\ell(Q)$ for its side-length, and $rQ$ will stand for the cube with the same center but enlarged by a factor $r$. We will use the same notation for balls and one dimensional cubes, that is, intervals. 


 We call domain an open and connected subset of $\R^d$.
\begin{definition}\label{defLipschitz}
Given $n\geq 1$, we say that $\Omega\subset\C$ is a $(\delta,R)-C^{n-1,1}$ domain if given any $z\in\partial\Omega$, there exists a function $A_z\in C^{n-1,1}(\R)$ supported in $[-4R,4R]$ such that 
\begin{equation*}
\norm{A_z^{(j)}}_{L^\infty}\leq \frac{\delta}{R^{j-1}} \mbox{\,\,\,\, for every } 0\leq j \leq n,
\end{equation*}
and, possibly after a translation that sends $z$ to the origin and a rotation that brings the tangent at $z$ to the real line, we have that
$$\Omega\cap Q(0,R)= \{x+i\,y: y>A_z(x)\}.$$
In case $n=1$ the assumption of the tangent is removed (we say that $\Omega$ is a $(\delta,R)$-Lipschitz domain). 
\end{definition}

{\bf On measure theory:}
We denote the $d$-dimensional Lebesgue measure in $\R^d$ by $m$. At some point we use $m$ also to denote a natural number. We will write $dz$ for the form $dx+i\,dy$ and analogously $d\overline{z}=dx-i\,dy$, where $z=x+i\,y$. Thus, when integrating a function with respect to  the Lebesgue measure of a variable $z$ we will always use $dm(z)$ to avoid confusion,  or simply $dm$. 

{\bf On indices:}
In this text  $\N_0$ stands for the natural numbers including $0$. Otherwise we will write $\N$.  We will make wide use of the multiindex notation for exponents and derivatives. For $\alpha\in\Z^d$  its modulus is $|\alpha|=\sum_{i=1}^d|\alpha_i|$ and its factorial is $\alpha!=\prod_{i=1}^d \alpha_i!$. Given two multiindices $\alpha, \gamma\in \Z^d$ we write $\alpha\leq \gamma$ if $\alpha_i\leq \gamma_i$ for every $i$. We say $\alpha<\gamma$ if, in addition, $\alpha\neq\gamma$. Furthermore, we write
$${\alpha \choose \gamma}:=\prod_{i=1}^d {\alpha_i \choose \gamma_i}=\begin{cases}
\prod_{i=1}^d \frac{\alpha_i!}{\gamma_i!(\alpha_i-\gamma_i)!}  & \mbox{if }\alpha \in \N_0^d \mbox{ and }\vec{0}\leq \gamma \leq\alpha ,\\
0 & \mbox{otherwise.}
\end{cases}$$
For $x\in \R^d$ and $\alpha\in \Z^d$ we write $x^\alpha:= \prod x_i^{\alpha_i}$.  Given any $\phi\in C^\infty_c$ (infintitely many times differentiable with compact support in $\R^d$)  and $\alpha\in\N_0^d$ we write $D^\alpha\phi=\frac{\partial^{|\alpha|}}{\prod \partial_{x_i}^{\alpha_i}}\phi$. 

At some point we will use also use roman letter for multiindices, and then, to avoid confusion, we will use the vector notation $\vec{i},\vec{j}, \dots$

{\bf On complex notation}
For $z=x+i\,y \in \C$ we write $\real(z):=x$ and $\imag(z):=y$. Note that the symbol $i$ will be used also widely as a index for summations without risk of confusion. The multiindex notation will change slightly: for $z\in \C$ and $\alpha\in \Z^2$ we write $z^\alpha:=z^{\alpha_1}\overline{z}^{\alpha_2}$. 

We also adopt the traditional Wirtinger notation for derivatives, that is, given any $\phi\in C^\infty_c(\C)$, then 
$$\partial \phi (z):=\frac{\partial \phi}{\partial z}(z)=\frac12(\partial_x\phi-i\,\partial_y\phi) (z),$$
 and 
 $$\overline \partial \phi (z):=\frac{\partial\phi}{\partial \overline z}(z)=\frac12(\partial_x\phi+i\,\partial_y\phi) (z).$$
Thus, given any $\phi\in C^\infty_c(\C)$  and $\alpha\in\N_0^2$, we write $D^\alpha\phi=\partial^{\alpha_1}\overline\partial^{\alpha_2}\phi$.

{\bf On Sobolev spaces:}
For any open set $U$, every distribution $f\in \mathcal{D}'(U)$ and $\alpha\in\N_0^d$, the {\em distributional derivative} $D^\alpha_U f$ is the distribution defined by
$$\langle D^\alpha_U f,\phi\rangle:=(-1)^{|\alpha|}\langle f, D^\alpha \phi\rangle \mbox{\,\,\,\, for every }\phi \in C^\infty_c(U).$$
Abusing notation we will write $D^\alpha$ instead of $D^\alpha_U$ if it is clear from the context. If the distribution is regular, that is, if it coincides with an $L^1_{loc}$ function acting on $\mathcal{D}(U)$, then we say that $D^\alpha_U f$ is a {\em weak derivative} of $f$ in $U$. We write $|\nabla^n f|=\sum_{|\alpha|=n}|D^\alpha f|$.

Given numbers $n\in\N$, $1\leq p\leq\infty$ an open set $U\subset\R^d$ and an $L^1_{loc}(U)$ function $f$, we say that $f$ is in the Sobolev space $W^{n,p}(U)$ of smoothness $n$ and order of integrability $p$ if $f$ has weak derivatives $D^\alpha_U f\in L^p$ for every $\alpha\in\N_0^d$ with $|\alpha|\leq n$. When $\Omega$ is a Lipschitz domain, we will use the norm
$$\norm{f}_{W^{n,p}(\Omega)}=\norm{f}_{L^p(\Omega)}+\norm{\nabla^n f}_{L^p(\Omega)},$$
which has other equivalent expressions such as
\begin{equation}\label{eqEquivalenceNormsSobolev}
\norm{f}_{W^{n,p}(\Omega)}\approx \norm{f}_{L^p(\Omega)}+\sum_{|\alpha|\leq n}\norm{D^\alpha f}_{L^p(\Omega)}\approx \norm{f}_{L^p(\Omega)}+\sum_{j=1}^{d}\norm{\partial_j^n f}_{L^p(\Omega)}
\end{equation}
(see \cite[Theorem 4.2.4]{TriebelInterpolation}) or, if $\Omega$ is an extension domain,
$$\norm{f}_{W^{n,p}(\Omega)}\approx \inf_{F: F|_\Omega\equiv f}\norm{F}_{W^{n,p}(\R^d)}.$$
From \cite{Jones}, we know that uniform domains (and in particular, Lipschitz domains) are Sobolev extension domains for any indices $n\in \N$ and $1\leq p \leq\infty$.

%
%
{\bf On Whitney coverings:}
Given a domain $\Omega$, we say that a collection of open dyadic cubes $\mathcal{W}$ is a {\rm Whitney covering} of $\Omega$ if they are disjoint, the union of the cubes and their boundaries is $\Omega$, there exists a constant $C_{\mathcal{W}}$ such that 
$$C_\mathcal{W} \ell(Q)\leq \dist(Q, \partial\Omega)\leq 4C_\mathcal{W}\ell(Q),$$
two neighbor cubes $Q$ and $R$ (i.e., $\overline Q\cap \overline R\neq\emptyset$) satisfy $\ell(Q)\leq 2 \ell(R)$, and the family $\{20 Q\}_{Q\in\mathcal{W}}$ has finite overlapping. The existence of such a covering is granted for any open set different from $\R^d$ and in particular for any domain as long as $C_\mathcal{W}$ is big enough (see \cite[Chapter 1]{SteinPetit} for instance).

{\bf On the Leibniz rule:}
Given a domain $\Omega\subset \R^d$, a function $f\in W^{n,p}(\Omega)$ and a multiindex $\alpha\in \N_0^d$ with $|\alpha|\leq n$, if $\phi\in C^\infty_c(\Omega)$, then $\phi\cdot f\in  W^{n,p}(\Omega)$ and 
\begin{equation}\label{eqLeibniz}
D^\alpha (\phi\cdot f)=\sum_{\gamma\leq \alpha}{\alpha\choose\gamma} D^\gamma\phi D^{\alpha-\gamma} f
\end{equation}
 (see \cite[Section 5.2.3]{Evans}).
 
{\bf On Green's formula:}
The Green Theorem can be written in terms of complex derivatives (see \cite[Theorem 2.9.1]{AstalaIwaniecMartin}). Let $\Omega$ be a bounded Lipschitz domain. If $f, g\in W^{1,1}(\Omega)\cap C(\overline{\Omega})$, then 
\begin{equation}\label{eqGreen}
\int_\Omega \left(\partial f + \overline \partial g\right) \,dm=\frac{i}2\left(\int_{\partial\Omega} f(z) \, d\overline{z}- \int_{\partial\Omega}g(z)\, dz\right).
\end{equation}

{\bf On the Sobolev Embedding Theorem:}
We state a reduced version of the Sobolev Embedding Theorem for Lipschitz domains (see \cite[Theorem 4.12, Part II]{AdamsFournier}). 
For each Lipschitz domain $\Omega\subset \R^d$ and every $p>d$, there is a continuous embedding of the Sobolev space $W^{1,p}(\Omega)$ into the H\"older space $C^{0,1-\frac{d}{p}}(\overline\Omega)$. That is, writing
$$\norm{f}_{C^{0,s}(\overline\Omega)}=\norm{f}_{L^\infty(\Omega)}+\sup_{\substack{x,y\in\overline\Omega\\x\neq y}}\frac{|f(x)-f(y)|}{|x-y|^s}\mbox{\,\,\,\, for $0<s\leq 1$},$$
we have that for every $f\in W^{1,p}(\Omega)$, 
\begin{equation}\label{eqSobolevEmbedding}
\norm{f}_{L^\infty(\Omega)}\leq \norm{f}_{C^{0,1-\frac dp}(\overline\Omega)} \leq C_\Omega \norm{f}_{W^{1,p}(\Omega)}.
\end{equation}

{\bf On inequalities:}
We will use Young's inequality. It states that for measurable functions $f$ and $g$, we have that
\begin{equation}\label{eqYoung}
\norm{f * g}_{L^q}\leq\norm{f}_{L^r}\norm{g}_{L^p}
\end{equation}
for $1\leq p,q,r\leq \infty$ with $\frac{1}{q}=\frac{1}{p}+\frac{1}{r}-1$ (see \cite[Appendix A2]{SteinPetit}).

\subsection{On chains and approximating polynomials}\label{secChainsAndPolynomials}
In the proof of Lemmas \ref{lemCompactness} and \ref{lemCompactnessDeath} we will use some techniques from \cite[Sections 3 and 4]{PratsTolsa}. We sum up some results here and refer the reader to that paper for the details. First we need the concept of `chain of cubes', which can be seen as some kind of hyperbolic path between the centers of those cubes. Along this section and the following one, we consider $\R^d$ as the ambient space, the necessary modifications to Definition \ref{defLipschitz} are left to the reader.

\begin{figure}[ht]
 \centering
 {\includegraphics[width=0.9\textwidth]{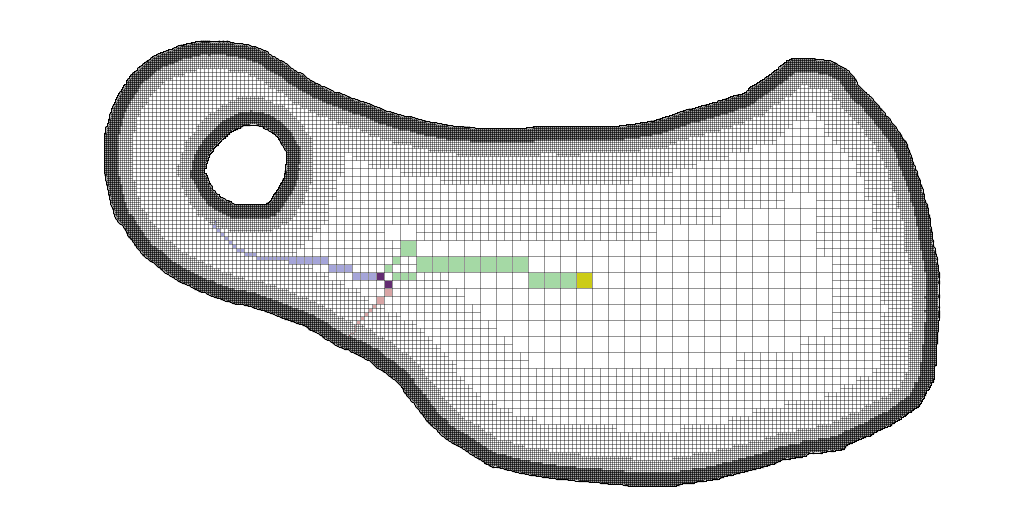}}    
  \caption{A Whitney decomposition of a Lipschitz domain with an admissible chain. In green, the prolongations to $Q_0$ (see Remark \ref{remChain}).}\label{figCovering}
\end{figure}

\begin{remark}[see \cite{PratsTolsa}]\label{remChain}
Consider a Lipschitz domain $\Omega \subset \R^d$, a Whitney covering $\mathcal{W}$, and a fixed Whitney cube $Q_0\in \mathcal{W}$ with size comparable to the diameter of $\Omega$. For every pair of Whitney cubes $Q$ and $S$ there exists an {\em admissible chain} $[Q,S]\in \bigcup_{M=1}^\infty \mathcal{W}^M$, that is,  satisfying the following properties:
\begin{enumerate}
\item The chain $[Q,S]=(Q_1,\dots,Q_M)$ satisfies that $Q_1=Q$, $Q_M=S$ and for any $1\leq j<M$, the cubes $Q_j$ and its {\rm next cube} in the chain $[Q,S]$, $\mathcal{N}(Q_j):=Q_{j+1}$ are neighbors. Abusing the notation, we also write $[Q,S]$ for the set $\{Q_1,\dots,Q_M\}$.

\item The length of the chain $\ell([Q,S]):=\sum_{j=1}^M \ell(Q_j)$ satisfies that $\ell([Q,S])\approx \Dist(Q,S)$, with constants depending only on $d$ and the Lipschitz character of $\Omega$.

\item If $M>1$, there exist two neighbor cubes $Q_S,S_Q\in [Q,S]$ such that the subchains $[Q,Q_S]$ and $[S_Q,S]$ are disjoint, the union $[Q,Q_S]\cup[S_Q,S]=[Q,S]$ and there are two admissible chains $[Q,Q_0]$ and $[Q_0,S]$ such that the subchains $[Q,Q_S]\subset [Q,Q_0]$ and $[S_Q,S]\subset [Q_0,S]$. In other words, $[Q,Q_S]$ is the ``ascending'' subchain and $[S_Q,S]$ is the ``descending'' subchain (see Figure \ref{figCovering}). 

\item  For $P \in [Q,Q_S]$, $L\in[S_Q,S]$ we have that
\begin{equation}\label{eqDPQFar}
\Dist(P,S)\approx \Dist(Q,S) \approx \Dist(Q,L).
\end{equation}
Moreover
\begin{equation}\label{eqDPQClose}
\Dist(P,Q)\approx \ell(P) \quad\quad\mbox{ and }\quad\quad \Dist(L,S)\approx \ell(L).
\end{equation}
In particular, 
\begin{equation*}
\ell(Q_S)\approx\ell(S_Q)\approx \Dist(Q,S)\approx \Dist(Q,Q_S)\approx \Dist(Q_S,S).
\end{equation*}
All the constants depend only on $d$ and the Lipschitz character of $\Omega$.
\end{enumerate}
\end{remark}

\begin{definition}\label{defShadow}
If $Q, S\in [P,Q_0]$ for some Whitney cube $P$ and $\mathcal{N}^j(Q)=S$ for a certain $j \in \N_0$, then we say that $Q\leq S$. 

We define the \emph{shadow} of $Q$ as $\SH_\rho(Q):=\{S: \Dist(S,Q)\leq \rho Q\}$, and its ``realization'' is the region $\Sh_\rho(Q):=\bigcup_{S\in\SH_\rho(Q)} S$. For $\rho_0$ big enough, we have that every Whitney cube $Q$ satisfies that
 $$\{S: S\leq Q\} \subset \SH_{\rho_0}(Q).$$
 We will then write ${\Sh} (Q):={\Sh}_{\rho_0}(Q)$ (see Figure \ref{figShadow}).
\end{definition}

\begin{figure}[ht]
 \centering
 {\includegraphics[width=0.9\textwidth]{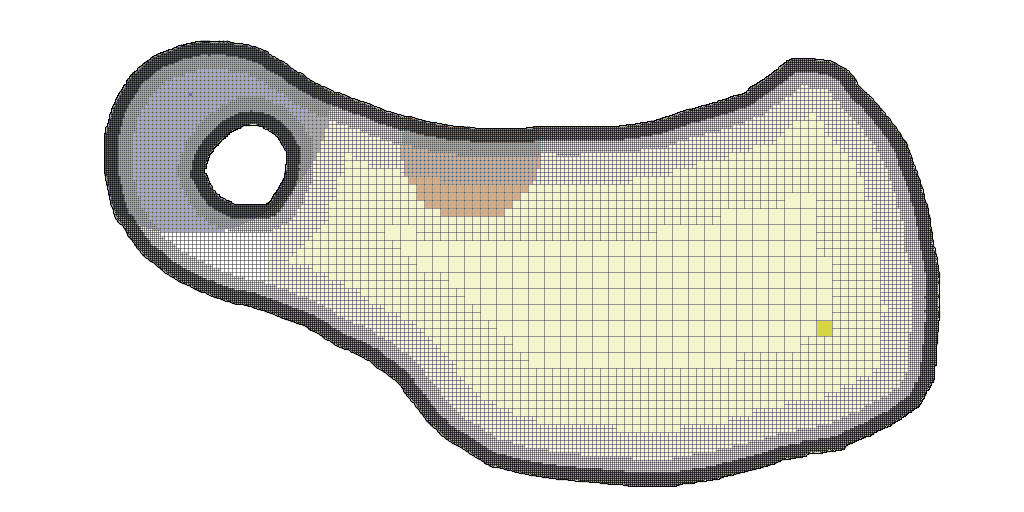}}    
  \caption{A Whitney decomposition of a Lipschitz domain with the shadows of three different cubes (see Definition \ref{defShadow}).}\label{figShadow}
\end{figure}

Let us recall the definition of the non-centered Hardy-Littlewood maximal operator. Given $f\in L^1_{loc}(\R^d)$ and $x\in\R^d$, we define $Mf(x)$ as the supremum of the mean of $f$ in cubes containing $x$, that is,
$$Mf(x)=\sup_{Q\ni x} \frac{1}{|Q|} \int_Q f(y) \, dy.$$
It is a well known fact that this operator is bounded on $L^p$ for $1<p<\infty$. We are interested also in the properties of the maximal function exposed in \cite{PratsTolsa}.
\begin{lemma}\label{lemmaximal}
Assume that $g\in L^1_{loc}(\R^d)$ and $r>0$. For every $Q\in\mathcal{W}$, we have
\begin{enumerate}[1)]
\item If $\eta>0$, 
\begin{equation}\label{eqMaximalFar}
 \sum_{S:\Dist(Q,S)>r}  \frac{\int_S g(x) \, dx}{D(Q,S)^{d+\eta}}\lesssim \frac{\inf_{y\in Q} Mg(y)}{r ^\eta}.
 \end{equation}
\item If $\eta>0$,
\begin{equation}\label{eqMaximalClose} 
 \sum_{S:\Dist(Q,S)<r}  \frac{\int_S g(x) \, dx}{D(Q,S)^{d-\eta}}\lesssim \inf_{y\in Q} Mg(y) \,r^\eta.
 \end{equation}
\item In particular, 
\begin{equation*}
 \sum_{S: S<Q} \int_S g(x) \, dx\lesssim \inf_{y\in Q} Mg(y) \, \ell(Q)^d.
\end{equation*}
\end{enumerate}
\end{lemma}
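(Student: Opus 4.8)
The plan is to reduce all three statements to a single elementary estimate: for every $t\gtrsim \ell(Q)$,
\[
\sum_{S\in\mathcal{W}:\,\Dist(Q,S)\le t}\int_S g\,dm\;\lesssim\; t^{d}\,\inf_{y\in Q}Mg(y),
\]
where the implicit constant depends only on $d$ and $C_{\mathcal{W}}$. Assuming $g\ge 0$ (otherwise replace $g$ by $|g|$), this holds because the Whitney cubes are pairwise disjoint; because $\Dist(Q,S)\le t$ forces $\diam(S)\le t$ and $\dist(Q,S)\le t$, so every such $S$ lies in the ball $B:=B(x_Q,C_d t)$ about the center $x_Q$ of $Q$; hence $\sum_S\int_S g\le\int_B g$; and because $t\gtrsim\ell(Q)$ gives $Q\subset B$, so for each $y\in Q$ a cube of side comparable to $t$ containing both $y$ and $B$ is admissible in the supremum defining $Mg(y)$, whence $\int_B g\lesssim t^{d}Mg(y)$. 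Taking the infimum over $y\in Q$ yields the displayed estimate.

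Parts 1) and 2) are then dyadic decompositions plus geometric summation. For 1), since $\Dist(Q,S)\ge\diam(Q)\gtrsim\ell(Q)$ for every $S$, it is enough to sum over $S$ with $\Dist(Q,S)>t_0:=\max\{r,\diam(Q)\}$; split this set into the annular families $A_k=\{S:2^{k}\le\Dist(Q,S)<2^{k+1}\}$ over integers $k$ with $2^{k}\gtrsim t_0$, use $\Dist(Q,S)^{-(d+\eta)}\approx 2^{-k(d+\eta)}$ on $A_k$, apply the key estimate with $t=2^{k+1}$ to get $\sum_{S\in A_k}\int_S g\lesssim 2^{kd}\inf_Q Mg$, and sum the convergent geometric series $\sum_{2^{k}\gtrsim t_0}2^{-k\eta}\lesssim t_0^{-\eta}\le r^{-\eta}$; here $\eta>0$ is exactly what makes the series converge at the large-scale end. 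Part 2) is the mirror image: the sum ranges over $\diam(Q)\lesssim\Dist(Q,S)<r$ (and is empty unless $r\gtrsim\diam(Q)$), hence over the finitely many families $A_k$ with $\diam(Q)\lesssim 2^{k}\lesssim r$; now $\Dist(Q,S)^{-(d-\eta)}\approx 2^{-k(d-\eta)}$, the key estimate again gives $2^{kd}\inf_Q Mg$ per family, and $\sum_{2^{k}\lesssim r}2^{k\eta}\lesssim r^{\eta}$, where $\eta>0$ is used this time to ensure the finite sum is dominated by its top-scale term $\approx r^{\eta}$.

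Part 3) is immediate from the key estimate: by Definition \ref{defShadow} we have $\{S:S<Q\}\subset\SH_{\rho_0}(Q)=\{S:\Dist(S,Q)\le\rho_0\ell(Q)\}$, so the sum is at most $\sum_{S:\,\Dist(Q,S)\le\rho_0\ell(Q)}\int_S g$, which is the key estimate with $t=\rho_0\ell(Q)\approx\ell(Q)$; alternatively it is the case $\eta=d$, $r\approx\ell(Q)$ of part 2). I do not foresee a real obstacle; the only points requiring care are the bookkeeping at the extreme scales — in 1) starting the dyadic sum at scale $\max\{r,\diam(Q)\}$ so the key estimate applies yet the claimed bound remains $r^{-\eta}$, and in 2)–3) noting the sum is empty when $r\lesssim\diam(Q)$ — and tracking that every constant depends only on $d$ and $C_{\mathcal{W}}$, i.e.\ ultimately on the Lipschitz character of $\Omega$.
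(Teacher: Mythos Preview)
Your argument is correct and is precisely the standard dyadic annuli decomposition one expects for such estimates. Note, however, that the paper does not actually prove this lemma: it is stated without proof and attributed to \cite{PratsTolsa} (see the sentence immediately preceding the lemma, ``We are interested also in the properties of the maximal function exposed in \cite{PratsTolsa}''). So there is no in-paper proof to compare against; your write-up would serve perfectly well as the omitted argument, and the referenced paper proves it in essentially the same way --- the key estimate $\sum_{S:\Dist(Q,S)\le t}\int_S g\lesssim t^d\inf_Q Mg$ followed by a geometric sum over dyadic scales.
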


We will also use some approximating polynomials of a Sobolev function $f$ around $3Q$. Namely, given a function $f\in W^{n,p}(Q)$, we define $\mathbf{P}_{Q}^{n} f$ as the unique polynomial such that for every multiindex $\alpha$ with $|\alpha|\leq n$, we have that
$$\int_Q D^\alpha f\, dm=\int_Q D^\alpha \mathbf{P}_Q^n f\, dm.$$
These polynomials have the following properties:
\begin{enumerate}
\item Let $z_Q$ be the center of $Q$. If we consider the Taylor expansion of $\mathbf{P}_{3Q}^{n-1} f$ at $z_Q$, 
\begin{equation}\label{eqTaylorExpansion}
\mathbf{P}_{3Q}^{n-1} f (z)= \sum_{|\gamma|<n}m_{Q,\gamma} (z-z_Q)^\gamma,
\end{equation}
then the coefficients $m_{Q,\gamma}$ are bounded by
\begin{equation}\label{eqMGammaBounded}
|m_{Q,\gamma}| \lesssim_{d,n} \norm{f}_{W^{n-1,\infty}(3Q)} (1+\ell(Q)^{n-1}). 
\end{equation}

\item Let us assume that, in addition, the function $f$ is in the Sobolev space $W^{n,p}(3Q)$ for a certain $1\leq p \leq\infty$. Given $0\leq j\leq n$, if we have a smooth function $\varphi \in C^\infty(3Q)$ satisfying $\norm{\nabla^i\varphi}_{L^\infty(3Q)}\lesssim\frac{1}{\ell(Q)^j}$ for  $0\leq i\leq j$, then we have the Poincar\'e inequality
 \begin{equation}\label{eqPoincare}
 \norm{\nabla^j\left(\left(f-\mathbf{P}_{3Q}^{n-1} f\right) \varphi \right)}_{L^p(3Q)}\leq C \ell(Q)^{n-j} \norm{\nabla^n f}_{L^p(3Q)} .
 \end{equation}

\item Given a domain with a Whitney covering $\mathcal{W}$, two Whitney cubes $Q, S\in \mathcal{W}$, an admissible chain $[S,Q]$ as in Remark \ref{remChain},  and $f\in W^{n,p}(\Omega)$, we have that
\begin{equation}\label{eqChain}
\norm{f-\mathbf{P}^{n-1}_{3Q} f}_{L^1(S)}  \lesssim_{d,n} \sum_{P\in [S,Q]}\frac{\ell(S)^d D(P,S)^{n-1}}{\ell(P)^{d-1}}\norm{\nabla^n f}_{L^1(5P)}.
\end{equation}
\end{enumerate}

\begin{lemma}\label{lemTwoFoldedFunctionGuay}
Consider a Lipschitz domain $\Omega\subset \R^d$ with Whitney covering $\mathcal{W}$, two functions $f\in L^p(\Omega)$ and $g\in L^{p'}(\Omega)$ and $\rho\geq 1$. Then
$$A_\rho(f,g):=\sum_{Q,S\in\mathcal{W}}\sum_{P\in [S,Q]}\frac{\ell(S)^d D(P,S)^{\rho-1}\norm{f}_{L^1(20P)}\norm{g}_{L^1(20Q)}}{\ell(P)^{d-1} D(Q,S)^{\rho+d}}\lesssim \norm{f}_{L^p(\Omega)}\norm{g}_{L^{p'}(\Omega)}.$$
\end{lemma}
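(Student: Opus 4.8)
The plan is to collapse the double sum defining $A_\rho(f,g)$ to a single sum over Whitney cubes by means of the ascending/descending structure of admissible chains (Remark~\ref{remChain}) and the maximal-function estimates of Lemma~\ref{lemmaximal}, reducing the claim to
\[
A_\rho(f,g)\ \lesssim\ \sum_{P\in\mathcal{W}}\norm{f}_{L^1(20P)}\,\inf_{y\in P}Mg(y),
\]
after which Hölder's inequality and the $L^{p'}$-boundedness of the Hardy--Littlewood maximal operator finish the job. Concretely, for $Q,S\in\mathcal{W}$ with $[S,Q]$ of length $>1$, split $[S,Q]=[Q,Q_S]\cup[S_Q,S]$ into its ascending subchain (attached to $Q$) and its descending subchain (attached to $S$) as in Remark~\ref{remChain}(3), which splits $A_\rho(f,g)=A_\rho^{\mathrm{asc}}(f,g)+A_\rho^{\mathrm{desc}}(f,g)$; the length-$1$ chains give the diagonal $\sum_Q m(Q)^{-1}\norm{f}_{L^1(20Q)}\norm{g}_{L^1(20Q)}$, which is already dominated by the displayed right-hand side.

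For $A_\rho^{\mathrm{asc}}(f,g)$, where $P$ runs over the ascending subchain of $Q$, use \eqref{eqDPQFar} to replace $\Dist(P,S)$ by $\Dist(Q,S)$ and cancel $\Dist(Q,S)^{\rho-1}$ against the denominator, obtaining a sum whose innermost factor is $\sum_{S:\,P\in[Q,Q_S]}\ell(S)^d\Dist(Q,S)^{-(d+1)}$. Since $P\in[Q,Q_S]$ forces $\ell(P)\lesssim\ell(Q_S)\approx\Dist(Q,S)$ (Remark~\ref{remChain}(4)), this is a sum over $S$ with $\Dist(Q,S)\gtrsim\ell(P)$ and is $\lesssim\ell(P)^{-1}$ by Lemma~\ref{lemmaximal}(1) with $g=\chi_\Omega$, $\eta=1$ and $r\approx\ell(P)$. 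What remains is $\sum_Q\sum_{P:\,Q\leq P}\ell(P)^{-d}\norm{f}_{L^1(20P)}\norm{g}_{L^1(20Q)}$; fixing $P$ and summing over $Q\leq P$ gives $\sum_{Q\leq P}\norm{g}_{L^1(20Q)}\lesssim\ell(P)^{d}\inf_{y\in P}Mg(y)$ by Lemma~\ref{lemmaximal}(2)--(3) and the bounded overlap of $\{20Q\}$, so that $A_\rho^{\mathrm{asc}}(f,g)\lesssim\sum_P\norm{f}_{L^1(20P)}\inf_{y\in P}Mg(y)$.

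For $A_\rho^{\mathrm{desc}}(f,g)$, where $P$ runs over the descending subchain of $S$, \eqref{eqDPQClose} gives $\Dist(P,S)\approx\ell(P)$ and \eqref{eqDPQFar} gives $\Dist(Q,S)\approx\Dist(Q,P)$, while $S\leq P$ and $\ell(P)\lesssim\ell(S_Q)\approx\Dist(Q,S)$. Fixing $P$, the pair $(Q,S)$ ranges over $S\leq P$ and over $Q$ with $\Dist(Q,P)\geq\ell(P)$ (which is automatic), and
\[
A_\rho^{\mathrm{desc}}(f,g)\ \lesssim\ \sum_P\frac{\norm{f}_{L^1(20P)}}{\ell(P)^{d-\rho}}\ \Bigl(\sum_{S:\,S\leq P}\ell(S)^d\Bigr)\ \sum_{Q}\frac{\norm{g}_{L^1(20Q)}}{\Dist(Q,P)^{d+\rho}}.
\]
Here $\sum_{S\leq P}\ell(S)^d=\sum_{S\leq P}m(S)\lesssim\ell(P)^d$ by Lemma~\ref{lemmaximal}(3) with $g=\chi_\Omega$, and $\sum_Q\norm{g}_{L^1(20Q)}\Dist(Q,P)^{-(d+\rho)}\lesssim\ell(P)^{-\rho}\inf_{y\in P}Mg(y)$ by Lemma~\ref{lemmaximal}(1) with $\eta=\rho$, $r\approx\ell(P)$ (again after the bounded-overlap reduction from $\int_{20Q}$ to $\int_Q$); balancing the powers of $\ell(P)$ gives $A_\rho^{\mathrm{desc}}(f,g)\lesssim\sum_P\norm{f}_{L^1(20P)}\inf_{y\in P}Mg(y)$ as well. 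Finally, writing $\norm{f}_{L^1(20P)}\lesssim m(P)^{1/p'}\norm{f}_{L^p(20P)}$ and $\inf_{y\in P}Mg(y)\leq m(P)^{-1/p'}\norm{Mg}_{L^{p'}(P)}$, Hölder's inequality for series together with the bounded overlap of $\{20P\}$, the disjointness of $\mathcal{W}$ and the $L^{p'}$-boundedness of $M$ (valid since $1<p'<\infty$) yields $\sum_P\norm{f}_{L^1(20P)}\inf_{y\in P}Mg(y)\lesssim\norm{f}_{L^p(\Omega)}\norm{Mg}_{L^{p'}(\Omega)}\lesssim\norm{f}_{L^p(\Omega)}\norm{g}_{L^{p'}(\Omega)}$.

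The step I expect to be the real work is the combinatorics of the chain decomposition rather than any single estimate: one has to determine, for a fixed Whitney cube $P$, exactly which pairs $(Q,S)$ place $P$ in the ascending (resp.\ descending) part of $[S,Q]$, and then check the comparabilities of Remark~\ref{remChain}(4) — in particular $\ell(P)\lesssim\Dist(Q,S)$ and $\Dist(P,S)\approx\Dist(Q,S)$ (resp.\ $\Dist(Q,P)\approx\Dist(Q,S)$) along those subchains — so that, after cancelling the $\Dist(Q,S)^{\rho-1}$ factor, the $S$-sum (resp.\ $Q$-sum) becomes a genuine ``far-away'' sum with exponent $d+1$ (resp.\ $d+\rho$) to which Lemma~\ref{lemmaximal}(1) applies with room to spare. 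The passage from the dilated integrals $\int_{20Q}$ appearing in $A_\rho$ to the $\int_Q$ in Lemma~\ref{lemmaximal} is a routine consequence of the finite overlap of $\{20Q\}_{Q\in\mathcal{W}}$.
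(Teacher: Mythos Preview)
Your proof is correct and follows essentially the same route as the paper's: split the chain $[S,Q]$ into the subchain attached to $Q$ and the one attached to $S$, use the comparabilities of Remark~\ref{remChain}(4) to reduce each piece to a single sum over $P$ of $\norm{f}_{L^1(20P)}\inf_P Mg$, and conclude by H\"older and the $L^{p'}$-boundedness of $M$. The only cosmetic differences are that the paper first uses the universal bound $\Dist(P,S)\lesssim\Dist(Q,S)$ to cancel the $\rho-1$ power (reducing both pieces to a $\Dist(Q,S)^{d+1}$ denominator) before splitting, whereas you split first and keep the sharper comparability $\Dist(P,S)\approx\ell(P)$ on the $S$-side, which leads you to apply Lemma~\ref{lemmaximal}(1) with $\eta=\rho$ there instead of $\eta=1$; both choices give the same final bound.
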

\begin{proof}
Using that $P\in [S,Q]$ implies $\Dist(P,S)\lesssim \Dist(Q,S)$ (see Remark \ref{remChain}), we get
\begin{align*}
A_\rho(f,g)
	&\lesssim \left(\sum_{Q,S\in\mathcal{W}} \sum_{P\in [S,S_Q]}+ \sum_{Q,S\in\mathcal{W}}\sum_{P\in [Q_S,Q]}  \right) \frac{\ell(S)^d \norm{f}_{L^1(20P)}\norm{g}_{L^1(20Q)}}{\ell(P)^{d-1} D(Q,S)^{d+1}} \\
	& = A^{(1)}(f,g)+A^{(2)}(f,g).
\end{align*}

We consider first the term $A^{(1)}(f,g)$ where the sum is taken with respect to  cubes $P\in [S,S_Q]$ and, thus, by \rf{eqDPQFar} the long distance $\Dist(Q,S)\approx \Dist(P,Q)$. Moreover, we have $S\in\SH(P)$ by Definition \ref{defShadow}. Thus, rearranging the sum, 
\begin{align*}
A^{(1)}(f,g)
	&\lesssim 
 \sum_{P\in\mathcal{W}}  \frac{\norm{f}_{L^1(20P)}}{\ell(P)^{d-1}} \sum_{Q\in\mathcal{W}}  \frac{\norm{g}_{L^1(20Q)}}{D(Q,P)^{d+1}}  \sum_{S \in \SH(P)} \ell(S)^d.
\end{align*}
By  Definition \ref{defShadow} again
$$\sum_{S \in \SH(P)} \ell(S)^d \approx \ell(P)^d$$
and, by \rf{eqMaximalFar} and the finite overlapping of the cubes $\{20 Q\}_{Q\in\mathcal{W}}$, we get
$$ \sum_{Q\in\mathcal{W}}  \frac{\norm{g}_{L^1(20Q)}}{D(Q,P)^{d+1}}\lesssim \frac{\inf_{x\in20P} Mg(x)}{\ell(P)}.$$

Next we perform a similar argument with $A^{(2)}(f,g)$. Note that when $P\in[Q_S,Q]$, we have $\Dist(Q,S) \approx \Dist(P,S)$ and $Q \in \SH(P)$, leading to
\begin{align*}
A^{(2)}(f,g)
	&\lesssim 
 \sum_{P\in\mathcal{W}}  \frac{\norm{f}_{L^1(20P)}}{\ell(P)^{d-1}} \sum_{Q \in \SH(P)}  \norm{g}_{L^1(20Q)}  \sum_{S\in\mathcal{W}} \frac{\ell(S)^d}{D(P,S)^{d+1}}.
\end{align*}
By \rf{eqMaximalClose} we get
$$\sum_{Q \in \SH(P)}  \norm{g}_{L^1(20Q)} \lesssim  \inf_{x\in20P} Mg(x)\, \ell(P)^d$$
and, applying \rf{eqMaximalFar} to the characteristic function of the domain,
$$ \sum_{S\in\mathcal{W}}  \frac{\ell(S)^d}{D(P,S)^{d+1}}\approx \frac{1}{\ell(P)}.$$
Thus, 
\begin{align*}
A_\rho(f,g)
	&\lesssim
 \sum_{P\in\mathcal{W}}  \frac{\norm{f}_{L^1(20P)}}{\ell(P)^{d-1}} \frac{\inf_{20P} Mg}{\ell(P)} \ell(P)^d \lesssim  \sum_{P\in\mathcal{W}} \norm{f \cdot Mg}_{L^1(20P)}.
\end{align*}
By H\"older inequality and the boundedness of the Hardy-Littlewood maximal operator in $L^{p'}$, 
\begin{align*}
A_\rho(f,g)
	&\lesssim \left(\sum_{P\in\mathcal{W}} \norm{ f}_{L^p(20P)}^p\right)^{1/p} \left(
 \sum_P \norm{Mg}_{L^{p'}(20P)}^{p'}\right)^{1/p'} \lesssim \norm{ f}_{L^p(\Omega)}\norm{g}_{L^{p'}(\Omega)} .
\end{align*}
\end{proof}

\subsection{Function spaces}\label{secSpaces}
Next we recall some definitions and results on the function spaces that we will use. For a complete treatment we refer the reader to \cite{TriebelTheory} and \cite{RunstSickel}.

\begin{definition}\label{defCollection}Let $\{\psi_j\}_{j=0}^\infty\subset C^\infty_c(\R^d)$ be a family of radial functions such that
\begin{equation*}
\left\{ 
\begin{array}{ll}
\supp \,\psi_0 \subset \DDD(0,2), & \\
\supp \,\psi_j \subset \DDD(0,2^{j+1})\setminus \DDD(0,2^{j-1}) & \mbox{ if $j\geq 1$},\\	
\end{array}
\right.
\end{equation*}
for all multiindex $\alpha\in \N^d$ there exists a constant $c_\alpha$ such that
\begin{equation*}
\norm{D^\alpha \psi_j}_\infty \leq \frac{c_\alpha}{2^{j |\alpha|} } \mbox{\,\,\, for every $j\geq 0$}
\end{equation*}
and
\begin{equation*}
\sum_{j=0}^\infty \psi_j(x)=1 \mbox{\,\,\, for every $x\in\R^d$.}
\end{equation*}
\end{definition}
 
 \begin{definition}
Given any Schwartz function $\psi \in \mathcal{S}(\R^d)$ its Fourier transform is
$$F\psi(\zeta)=\int_{\R^d} e^{-2\pi i x\cdot \zeta} \psi(x) dm(x).$$
This notion extends to the tempered distributions $\mathcal{S}(\R^d)'$ by duality.
 
Let $s \in \R$,  $1\leq p\leq \infty$, $1\leq q\leq\infty$. Then we define the non-homogeneous Besov space $B^s_{p,q}(\R^d)$ as the set of tempered distributions $f\in \mathcal{S}'(\R^d)$ such that 
\begin{equation*}
\norm{f}_{B^s_{p,q}}=\norm{\left\{2^{sj}\norm{F^{-1}\psi_j F f}_{L^p}\right\}}_{l^q}<\infty.
\end{equation*}
\end{definition}
These norms are equivalent for different choices of $\{\psi_j\}$.

Consider the boundary of a Lipschitz domain $\Omega \subset \C$. When it comes to the Besov space $B^s_{p,q}(\partial \Omega)$ we can just define it using the arc parameter of the curve, $z:I \to \partial\Omega$ with $|z'(t)|=1$ for all $t$.  We also use an auxiliary bump function $\varphi_\Omega:\R\to\R$ such that $\varphi_\Omega|_{2 I}\equiv 1$ and $\varphi_\Omega|_{(4 I)^c}\equiv 0$. 
Then, if $1\leq p,q< \infty$,  we define naturally the homogeneous Besov norm on the boundary of $\Omega$ as
\begin{equation*}
\norm{f}_{B^s_{p,q}(\partial\Omega)} :=\norm{(f\circ z) \varphi_\Omega}_{B^s_{p,q}(\R)}.
\end{equation*}
Note that if the domain is bounded, then $I$ is a finite interval with length equal to the length of the boundary of $\Omega$ and we need to extend $z$ periodically to $\R$ in order to have a sensible definition above.
For more information on these norms, we refer the reader to \cite[Section 2.3]{PratsPlanarDomains}.

\begin{theorem}\label{theoRunstSickel}
Let $n\in \N$ and $1<p<\infty$ with $np>d$. If $\Omega \subset \R^d$ is an extension domain, then for every pair $f,g\in W^{n,p}(\Omega)$ we have that
$$\norm{f\, g}_{W^{n,p}(\Omega)}\leq C_{d,n,p,\Omega} \norm{f}_{W^{n,p}(\Omega)}\norm{ g}_{W^{n,p}(\Omega)}.$$
Moreover, if  $\Omega$ is a Lipschitz domain and  $p>d$, then for $m\geq n$ we have that
$$\norm{f^m}_{W^{n,p}(\Omega)}\leq C_{d,n,p,\Omega} \, m^n \, \norm{f}_{L^\infty(\Omega)}^{m-n}  \norm{f}_{W^{n,p}(\Omega)}^n .$$
\end{theorem}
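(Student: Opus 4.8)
The plan is to reduce both estimates to the Leibniz rule \rf{eqLeibniz}, to H\"older's inequality, and to the Sobolev embeddings that become available once $np>d$. The auxiliary fact I would isolate first is that, for a Sobolev extension domain $\Omega$ with $np>d$,
\[
\norm{D^\beta g}_{L^{np/|\beta|}(\Omega)}\lesssim_{d,n,p,\Omega}\norm{g}_{W^{n,p}(\Omega)}\qquad\text{whenever }|\beta|\le n,
\]
with the convention $np/0=\infty$, so that the case $\beta=0$ is exactly the embedding $W^{n,p}(\Omega)\subset L^\infty(\Omega)$ (cf.\ \rf{eqSobolevEmbedding}). This holds because $D^\beta g\in W^{n-|\beta|,p}(\Omega)$ and, since $np>d$, the exponent $np/|\beta|$ does not exceed the critical Sobolev exponent of $W^{n-|\beta|,p}$; one gets it by extending $g$ to $\R^d$ and invoking the $\R^d$ embeddings. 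Note the precise role of the hypothesis: the borderline $np=d$ is excluded exactly so that $W^{n,p}(\Omega)\subset L^\infty(\Omega)$.

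For the algebra inequality I would fix $\alpha$ with $|\alpha|\le n$, expand $D^\alpha(fg)=\sum_{\gamma\le\alpha}\binom{\alpha}{\gamma}D^\gamma f\,D^{\alpha-\gamma}g$ by \rf{eqLeibniz}, and bound each summand: for $|\gamma|=j$ with $1\le j\le|\alpha|-1$, split $\tfrac1p=\tfrac{j}{np}+\tfrac{n-j}{np}$ and apply H\"older with exponents $\tfrac{np}{j}$ and $\tfrac{np}{n-j}$, so that the displayed embeddings give $\norm{D^\gamma f\,D^{\alpha-\gamma}g}_{L^p(\Omega)}\lesssim\norm{f}_{W^{n,p}(\Omega)}\norm{g}_{W^{n,p}(\Omega)}$; the endpoints $j=0$ and $j=|\alpha|$ are handled by pulling out $\norm{g}_{L^\infty(\Omega)}$ or $\norm{f}_{L^\infty(\Omega)}$. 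Summing the finitely many terms and adding $\norm{fg}_{L^p}\le\norm{f}_{L^\infty}\norm{g}_{L^p}$ finishes the first part; iterating it shows $f^m\in W^{n,p}(\Omega)$ for every $m$, which is what makes the second statement meaningful.

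For the power inequality (now with $\Omega$ Lipschitz and $p>d$) I would fix $\alpha$ with $|\alpha|=n$ and apply the multivariate Fa\`a di Bruno formula to $t\mapsto t^m$: it writes $D^\alpha(f^m)$ as a finite sum, indexed by unordered tuples of nonzero multiindices $(\beta_1,\dots,\beta_k)$ with $\sum_i\beta_i=\alpha$ (so $1\le k\le n$ and $\sum_i|\beta_i|=n$), of terms $c_{\alpha,(\beta_i)}\,m(m-1)\cdots(m-k+1)\,f^{m-k}\prod_{i=1}^k D^{\beta_i}f$, where $c_{\alpha,(\beta_i)}$ depends only on $n$ and $d$ and, since $m\ge n\ge k$, the exponent $m-k\ge 0$ and $m(m-1)\cdots(m-k+1)\le m^k\le m^n$. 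H\"older with exponents $np/|\beta_i|$ together with the embeddings above bound $\bigl\|f^{m-k}\prod_i D^{\beta_i}f\bigr\|_{L^p(\Omega)}$ by $\norm{f}_{L^\infty(\Omega)}^{m-k}\norm{f}_{W^{n,p}(\Omega)}^{k}$, and since $p>d$ gives $\norm{f}_{L^\infty(\Omega)}\lesssim\norm{f}_{W^{1,p}(\Omega)}\lesssim\norm{f}_{W^{n,p}(\Omega)}$ via \rf{eqSobolevEmbedding}, this collapses, for every $1\le k\le n$, to $\norm{f}_{L^\infty(\Omega)}^{m-n}\norm{f}_{W^{n,p}(\Omega)}^{n}$. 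Summing the finitely many terms and adding $\norm{f^m}_{L^p}\le\norm{f}_{L^\infty}^{m-1}\norm{f}_{L^p}\lesssim\norm{f}_{L^\infty(\Omega)}^{m-n}\norm{f}_{W^{n,p}(\Omega)}^n$ then yields the stated bound, in view of the equivalent norm in \rf{eqEquivalenceNormsSobolev}.

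I do not expect a genuine obstacle here — the statement is classical (cf.\ \cite{RunstSickel}) — but the one place that needs real care is the bookkeeping of the dependence on $m$: one has to check that in the Fa\`a di Bruno expansion the \emph{only} source of $m$-growth is the falling factorial $m(m-1)\cdots(m-k+1)$ with $k\le n$ (this is exactly what produces the factor $m^n$), and that the family of crude bounds $\norm{f}_{L^\infty}^{m-k}\norm{f}_{W^{n,p}}^{k}$, $1\le k\le n$, does collapse into the single term $\norm{f}_{L^\infty}^{m-n}\norm{f}_{W^{n,p}}^{n}$ — which again relies on $W^{n,p}(\Omega)\subset L^\infty(\Omega)$, i.e.\ on $p>d$.
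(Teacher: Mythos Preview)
Your proposal is correct and follows the same overall strategy as the paper --- expand via Leibniz, control the factors by Sobolev embeddings, and track the $m$-dependence through the combinatorial coefficients --- but the bookkeeping differs in two places worth noting.

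For the algebra property, the paper does not reprove anything: it cites \cite[Section 4.6.4]{RunstSickel} for the $\R^d$ case and then transfers to $\Omega$ by composing with the extension operator. Your direct Leibniz--H\"older argument with the scale $L^{np/|\beta|}$ is a standard self-contained alternative. For the power estimate, the paper first reduces to pure directional derivatives via the equivalent norm \rf{eqEquivalenceNormsSobolev}, so it only needs to control $\partial_k^n(f^m)$. It then writes this as $f^{m-n}\sum_{\vec j}c_{\vec j,m}\prod_{i=1}^n\partial_k^{j_i}f$ (a sum over decreasing $\vec j\in\N_0^n$ with $|\vec j|=n$) and uses the exact identity $\sum_{\vec j}c_{\vec j,m}=m^n$; since $p>d$, every factor $\partial_k^{j_i}f$ with $j_i<n$ is placed in $L^\infty$ via \rf{eqSobolevEmbedding}, and the single factor with $j_i=n$ (when present) stays in $L^p$. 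Your route through the general Fa\`a di Bruno expansion, bounding each falling factorial $m(m-1)\cdots(m-k+1)\le m^n$ and using the full $L^{np/|\beta_i|}$ scale, reaches the same conclusion with slightly heavier bookkeeping; the paper's reduction to pure derivatives and use of only the $L^\infty$ embedding is what lets it keep the combinatorics one-dimensional.
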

\begin{proof}
We have that $W^{n,p}(\R^d)$ is a multiplicative algebra (see \cite[Section 4.6.4]{RunstSickel}), that is, if $f,g\in W^{n,p}(\R^d)$, then
$$\norm{f\, g}_{W^{n,p}}\leq C_{d,n,p} \norm{f}_{W^{n,p}}\norm{ g}_{W^{n,p}}.$$
Since $\Omega$ is an extension domain, we have a bounded operator $E:W^{n,p}(\Omega)\to W^{n,p}(\R^d)$ such that $(Ef)|_\Omega=f|_\Omega$ for every $f\in W^{n,p}(\Omega)$. The first property is a consequence of this fact.

To prove the second property, first assume that $f\in C^\infty(\overline{\Omega})$. By \rf{eqEquivalenceNormsSobolev} we only need to prove that $\norm{\partial_k^n(f^m)}_{L^p(\Omega)} \leq C_{d,n,p,\Omega} m^n \left( \norm{f}_{L^\infty(\Omega)}^{m-n}  \norm{f}_{W^{n,p}(\Omega)}^n \right)$ for $1\leq k\leq d$. Without loss of generality, we will assume $k=1$. By the Leibniz' rule, it is an exercise to check that
\begin{equation}\label{eqBreakEveryCombination}
\partial_1^n(f^m)= f^{m-n}\sum_{\substack{\vec{j}\in \N_0^n\\j_i\geq j_{i+1} \text{ for } 1\leq i <n\\ |\vec{j}|=n}} c_{\vec{j},m}\prod_{i=1}^{n} \partial_1^{j_i} f,
\end{equation}
with $c_{\vec{j},m}>0$ and $\sum_{\vec{j}} c_{\vec{j},m} = m^n$. 
Consider $\vec{j}=(n,0,\cdots,0)$. Then, by  \rf{eqSobolevEmbedding}, that is, the Sobolev embedding Theorem, we get
\begin{equation}\label{eqSimplestCase}
 \norm{\prod_{i=1}^{n} \partial_1^{j_i} f}_{L^p(\Omega)}=\norm{ \partial_1^{n} f \, f^{n-1}}_{L^p(\Omega)}\leq \norm{\partial_1^{n} f}_{L^p(\Omega)}\norm{f}_{L^\infty(\Omega)}^{n-1} \lesssim_{\Omega,p} \norm{f}_{W^{n,p}(\Omega)}^{n} .
 \end{equation}
For $\vec{j}\neq(n,0,\cdots,0)$, the indices $j_i<n$ for $1\leq i \leq n$ and we use \rf{eqSobolevEmbedding} again to state that
\begin{equation}\label{eqOtherCases}
 \norm{\prod_{i=1}^{n} \partial_1^{j_i} f}_{L^p(\Omega)} \leq \prod_{i=1}^{n} \norm{\partial_1^{j_i} f}_{L^{\infty}(\Omega)}|\Omega|^\frac{1}{p} \lesssim_{\Omega,n,p}  \prod_{i=1}^{n} \norm{\partial_1^{j_i} f}_{W^{1,p}(\Omega)} \leq \norm{f}_{W^{n,p}(\Omega)}^{n} .
 \end{equation}
By \rf{eqBreakEveryCombination}, \rf{eqSimplestCase}, \rf{eqOtherCases} and the triangle inequality, we get that
$$\norm{\partial_1^n(f^m)}_{L^p(\Omega)}\leq \norm{f^{m-n}}_{L^\infty(\Omega)}\sum_{\substack{\vec{j}\in \N_0^n\\j_i\geq j_{i+1} \mbox{ for } 1\leq i <n\\ |\vec{j}|=n}} c_{\vec{j},m}\norm{\prod_{i=1}^{n} \partial_1^{j_i} f}_{L^p(\Omega)}\lesssim  m^n\norm{f}_{L^\infty(\Omega)}^{m-n} \norm{f}_{W^{n,p}(\Omega)}^{n} .$$
By an approximation procedure this property applies to every $f\in W^{n,p}(\Omega)$.
\end{proof}

\subsection{A family of convolution operators in the plane}\label{secOperators}
\begin{definition}\label{defOperador}
Consider a function $K \in L^1_{loc}(\C\setminus\{0\})$. For any $f\in L^1_{loc}$ we define
$$T^K f(z)=\lim_{\varepsilon\to 0}\int_{\C \setminus B_\varepsilon(z)}K(z-w)f(w) \,dm(w)$$
as long as the limit exists, for instance, when $K$ is bounded away from $0$, $f\in L^1$ and $z\notin \supp(f)$ or when $f=\chi_U$ for an open set $U$ with $z\in U$, $\int_{B_\varepsilon(0)\setminus B_{\varepsilon'}(0)} K \, dm =0$ for every $\epsilon>\varepsilon'>0$ and $K$ is integrable at infinity. We say that $K$ is the kernel of $T^K$. 

For any multiindex $\gamma \in \Z^2$, we will consider $K^\gamma(z)=z^{\gamma}=z^{\gamma_1}\overline{z}^{\gamma_2}$ and then we abbreviate $T^\gamma f:=T^{K^\gamma} f$, that is,
\begin{equation*}
T^\gamma f(z)=\lim_{\varepsilon\to 0}\int_{\C \setminus B_\varepsilon(z)}(z-w)^\gamma f(w) \,dm(w)
\end{equation*}
as long as the limit exists.

For any operator $T$ and any domain $\Omega$, we can consider $T_\Omega f= \chi_\Omega \, T(\chi_\Omega\, f)$.
\end{definition}

\begin{example}\label{exBeurlingIterate}
As the reader may have observed, the Beurling and the Cauchy transforms are in the above family of operators. Namely, when $K(z)=z^{-2}$, that is, for $\gamma=(-2,0)$, then $\frac{-1}{\pi}T^\gamma$ is the Beurling transform. The operator $\frac1\pi T^{(-1,0)}$ coincides with the Cauchy transform.

Consider the iterates of the Beurling transform $\Beurling^m$ for $m>0$. For every $f \in L^p$ and $z\in \C$ we have
\begin{align*}
\Beurling^mf(z)
	& =\frac{(-1)^m m}{\pi}\lim_{\varepsilon\to 0}\int_{|z-\tau|>\varepsilon}\frac{(\overline{z-\tau})^{m-1}}{(z-\tau)^{m+1}}f(\tau)\, dm(\tau) =\frac{(-1)^m m}{\pi}T^{(-m-1,m-1)}f(z).
\end{align*}
That is, for $\gamma=(\gamma_1, \gamma_2)$ with $\gamma_1+\gamma_2=-2$ and $\gamma_1\leq -2$, the operator $T^\gamma$ is an iteration of the Beurling transform modulo constant (see \cite[Section 4.2]{AstalaIwaniecMartin}), and it maps $L^p(U)$ to itself for every open set $U$. If $\gamma_2\leq -2$, then $T^\gamma$ is an iterate of the conjugate Beurling transform and it is bounded in $L^p$ as well.
 \end{example}

Let us sum up some properties of the Cauchy transform which will be useful in the subsequent sections (see  \cite[Theorems 4.3.10, 4.3.12, 4.3.14]{AstalaIwaniecMartin}). We write $I_\Omega g:= \chi_\Omega\, g$ for every $g\in L^1_{loc}$.
\begin{theorem}\label{theoCauchy}
Let $1<p<\infty$. Then
\begin{itemize}
\item For every $f\in L^p$, we have that $\partial \Cauchy f= \Beurling f$ and $\overline{\partial}\Cauchy f= f$. 
\item For every function $f\in L^1$ with compact support, we have that if $p>2$ then
\begin{equation}\label{eqCauchyCompact}
\norm{\Cauchy f}_{L^p}\lesssim_p \diam(\supp(f))\norm{f}_{L^p}.
\end{equation}
\item Let $\Omega$ be a bounded open subset of $\C$. Then,  we have that
\begin{equation}\label{eqCauchyLocalized}
I_\Omega \circ \Cauchy : L^p(\C)\to W^{1,p}(\Omega)
\end{equation}
is bounded.
\end{itemize}
\end{theorem}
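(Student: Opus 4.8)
All three assertions are classical (they are precisely the results of \cite{AstalaIwaniecMartin} quoted above); since the first is the engine for the other two, I would prove it first. Write $\Cauchy f=\tfrac1\pi f*k$ with $k(z)=1/z$, which is unambiguous for $f\in C^\infty_c$ because $k\in L^1_{loc}$. For such $f$, Green's formula \rf{eqGreen} applied on a large ball minus $B_\varepsilon(z)$, letting $\varepsilon\to0$, shows that $k/\pi$ is a fundamental solution of $\overline\partial$, i.e.\ $\overline\partial\Cauchy f=f$; differentiating in $z$ and taking a principal value --- which is forced because $\partial k=-1/z^{2}$ is not locally integrable --- gives $\partial\Cauchy f=\Beurling f$. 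To reach $f\in L^p$, approximate by $f_n\in C^\infty_c$ in $L^p$ (with supports in a common compact set when $f$ is compactly supported, the only case needed later); since $\Beurling$ is bounded on $L^p$ and $\Cauchy$ is continuous into a suitable ambient space where $\Cauchy f_n\to\Cauchy f$, both sides of each identity converge in $\mathcal{D}'$ and the identities persist in the limit.

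For the second item, translating we may assume $\supp f\subset B(0,R)$ with $R=\diam(\supp f)$, and we split $k=k\chi_{B(0,4R)}+k\chi_{B(0,4R)^c}=:k_{\rm in}+k_{\rm out}$, so that $\pi\Cauchy f=f*k_{\rm in}+f*k_{\rm out}$. Here $k_{\rm in}\in L^1$ with $\norm{k_{\rm in}}_{L^1}\approx R$, so Young's inequality \rf{eqYoung} yields $\norm{f*k_{\rm in}}_{L^p}\lesssim R\,\norm{f}_{L^p}$. On the other hand $k_{\rm out}\in L^p(\C)$ with $\norm{k_{\rm out}}_{L^p}\approx_p R^{2/p-1}$ --- this is where $p>2$ is used --- and, the support of $f$ being bounded, $\norm{f}_{L^1}\lesssim R^{2-2/p}\norm{f}_{L^p}$; Young's inequality again gives $\norm{f*k_{\rm out}}_{L^p}\le\norm{f}_{L^1}\norm{k_{\rm out}}_{L^p}\lesssim_p R\,\norm{f}_{L^p}$. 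Adding the two bounds gives \rf{eqCauchyCompact}.

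For the third item, the gradient is controlled by the first item: $\partial_x\Cauchy f=\Beurling f+f$ and $\partial_y\Cauchy f=i(\Beurling f-f)$, so boundedness of $\Beurling$ on $L^p$ gives $\norm{\nabla\Cauchy f}_{L^p(\Omega)}\le\norm{\nabla\Cauchy f}_{L^p(\C)}\lesssim_p\norm{f}_{L^p}$. It remains to bound $\norm{\Cauchy f}_{L^p(\Omega)}$. Fix a ball $B\supset\Omega$ and split $f=f\chi_{2B}+f\chi_{(2B)^c}$. For the first piece, since $f\chi_{2B}$ is supported in the bounded set $2B$, for $z\in\Omega$ only the values of $k$ on a ball of radius $\lesssim_\Omega 1$ enter, so by Young's inequality $\norm{\Cauchy(f\chi_{2B})}_{L^p(\Omega)}\lesssim\norm{f}_{L^p}\,\norm{k\chi_{\{|z|\lesssim_\Omega 1\}}}_{L^1}\lesssim_\Omega\norm{f}_{L^p}$. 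For the second piece, $|z-w|\gtrsim_\Omega 1+|w|$ whenever $z\in\Omega$ and $w\notin2B$, so $|\Cauchy(f\chi_{(2B)^c})(z)|\lesssim_\Omega\int_{(2B)^c}\frac{|f(w)|}{1+|w|}\,dm(w)\lesssim_{p,\Omega}\norm{f}_{L^p}$ by Hölder's inequality (valid for $1<p<2$; and when $f$ is compactly supported --- the situation in which this theorem is applied in the paper --- one simply chooses $B$ large enough that $\supp f\subset2B$, so this piece is absent and no restriction on $p$ remains). Together with the gradient estimate this yields \rf{eqCauchyLocalized}.

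The only genuinely delicate points, I expect, are two. First, making the limiting argument in the first item precise: one must name an ambient space in which $\Cauchy f_n\to\Cauchy f$ while simultaneously $\Beurling f_n\to\Beurling f$, so that the distributional identities pass to the limit --- cleanest when one restricts, as above, to compactly supported $f$. Second, the third item for $p\ge2$ with $f$ not compactly supported: there the crude absolute-value estimate on the ``far'' piece diverges and must be replaced by an argument exploiting the cancellation of the Cauchy kernel away from $\Omega$. Both are standard and, for the fully general statement, can be quoted from \cite[Chapter 4]{AstalaIwaniecMartin}.
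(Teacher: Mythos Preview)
The paper does not give its own proof of this theorem at all: it is stated as a summary of classical facts with a pointer to \cite[Theorems 4.3.10, 4.3.12, 4.3.14]{AstalaIwaniecMartin}, and is used as a black box thereafter. So there is nothing to compare against; your proposal goes strictly beyond the paper.

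That said, your sketch is essentially correct. The argument for \rf{eqCauchyCompact} via the split $k=k_{\rm in}+k_{\rm out}$ and two applications of Young's inequality is clean and complete. For the third item your honest accounting of the difficulty is accurate: the crude far--piece bound only closes for $1<p<2$, and for $p\ge 2$ with $f$ not compactly supported one must either invoke the definition of $\Cauchy$ on $L^p$ through the Fourier side (where $\widehat{\Cauchy f}(\xi)=c\,\widehat f(\xi)/\overline\xi$, and then $\chi_\Omega \Cauchy f$ is handled by localizing with a cutoff and using the first item), or --- as you note --- simply observe that every application in the paper is to compactly supported $f$, in which case the far piece vanishes. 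Either route is standard; the citation to \cite[Chapter 4]{AstalaIwaniecMartin} is the appropriate way to dispose of it.
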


In the companion article \cite{PratsPlanarDomains}, we proved the following theorem.
\begin{theorem}[see {\cite[Theorem 3.16]{PratsPlanarDomains}}]\label{theoGeometricPGtr2}
Consider $p>2$,   $n\geq 1$ and let $\Omega$ be a Lipschitz domain with parameterizations in $B^{n+1-1/p}_{p,p}$. Then, for every $\epsilon>0$ there exists a constant $C_\epsilon$ depending on $p$, $n$, $\Omega$ and $\epsilon$ such that for every multiindex $\gamma\in \Z^2\setminus\{(-1,-1)\}$ with $\gamma_1+\gamma_2\geq -2$, one has 
\begin{equation*}
\norm{T^\gamma_\Omega}_{W^{n,p}(\Omega)\to W^{n+{\gamma_1+\gamma_2+2},p}(\Omega)}\leq C_\epsilon |\gamma|^{n+{\gamma_1+\gamma_2+2}}\left( \norm{N}_{B^{n-1/p}_{p,p}(\partial\Omega)}+(1+\epsilon)^{|\gamma|}\right) + \diam(\Omega)^{\gamma_1+\gamma_2+2}.
\end{equation*}

In particular (see Example \ref{exBeurlingIterate}), for $m\in\N$ we have that  $(\Beurling^m)_\Omega$ is bounded in $W^{n,p}(\Omega)$, with norm
\begin{equation*}
\norm{(\Beurling^m)_\Omega}_{W^{n,p}(\Omega)\to W^{n,p}(\Omega)}\leq C_\epsilon m^{n+1}\left( \norm{N}_{B^{n-1/p}_{p,p}(\partial\Omega)}+ (1+\epsilon)^{m}\right).
\end{equation*}
\end{theorem}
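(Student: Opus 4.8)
Write $m_\gamma:=\gamma_1+\gamma_2+2\ge0$, so the target smoothness is $n+m_\gamma$. By the equivalent norm \rf{eqEquivalenceNormsSobolev} it suffices to bound $\norm{T^\gamma_\Omega f}_{L^p(\Omega)}$ and, for $j=1,2$, $\norm{\partial_j^{\,n+m_\gamma}(T^\gamma_\Omega f)}_{L^p(\Omega)}$; on the open set $\Omega$ the outer cut-off is inert, so one only has to control $z\mapsto\partial_j^{\,n+m_\gamma}\int_\Omega(z-w)^\gamma f(w)\,dm(w)$ for $z\in\Omega$. The zeroth-order piece and the additive $\diam(\Omega)^{m_\gamma}$ come for free: if $m_\gamma\ge1$ the kernel $|z-w|^{m_\gamma-2}$ is subcritical and Young's inequality \rf{eqYoung} gives $\norm{T^\gamma(\chi_\Omega f)}_{L^p(\Omega)}\lesssim\diam(\Omega)^{m_\gamma}\norm{f}_{L^p(\Omega)}$; if $m_\gamma=0$ then, by Example \ref{exBeurlingIterate} and the hypothesis $\gamma\ne(-1,-1)$, $T^\gamma$ is a constant multiple of an iterated (conjugate) Beurling transform, bounded on $L^p(\C)$ with norm that one packages into $C_\epsilon(1+\epsilon)^{|\gamma|}$ (see below).

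\textbf{The top-order term: Whitney decomposition.} For the top-order derivative I would fix a Whitney covering $\mathcal W$ of $\Omega$ and, on each $Q\in\mathcal W$, decompose $f=\sum_{S\in\mathcal W}\chi_S f$ and $\chi_S f=\chi_S\mathbf{P}^{n-1}_{3S}f+\chi_S(f-\mathbf{P}^{n-1}_{3S}f)$. For $S$ equal or a neighbour of $Q$ one uses the local $L^p(\C)$ smoothing bounds for $T^\gamma$ (controllable by a power of $|\gamma|$, or by $C_\epsilon(1+\epsilon)^{|\gamma|}$ via the refinement below); for $S$ far from $Q$ the kernel is smooth on $Q\times S$ and a careful count of the falling factorials gives $|\nabla^{n+m_\gamma}_z(z-w)^\gamma|\lesssim_n|\gamma|^{n+m_\gamma}|z-w|^{-n-2}\approx|\gamma|^{n+m_\gamma}\Dist(Q,S)^{-n-2}$, the factor remaining a power of $|\gamma|$ — rather than a constant raised to $|\gamma|$ — which takes some care because $m_\gamma$ may itself be comparable to $|\gamma|$. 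The remainder pieces $f-\mathbf{P}^{n-1}_{3S}f$ are then estimated through the chain inequality \rf{eqChain}, Remark \ref{remChain} and the maximal bounds of Lemma \ref{lemmaximal}, and the resulting double sum over $(Q,S)$ is summed by Lemma \ref{lemTwoFoldedFunctionGuay} with $\rho=n$ (whose implicit constant is independent of $\rho$), giving a contribution $\lesssim_n|\gamma|^{n+m_\gamma}\norm{\nabla^n f}_{L^p(\Omega)}$.

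\textbf{The polynomial parts and the normal vector.} What remains are the contributions of the polynomials $\mathbf{P}^{n-1}_{3S}f$, which telescope into $\partial_j^{\,n+m_\gamma}$ of expressions $T^\gamma(P\chi_\Omega)$ with $P$ of degree $<n$. Following \cite{CruzTolsa} I would treat these by Green's formula \rf{eqGreen}: applied repeatedly it turns the area integral of the (possibly very singular) kernel against $P$ into a boundary integral, lowering the order of the kernel at each step and placing a component of the outward normal on the density; whenever the kernel is still too singular along $\partial\Omega$ one also integrates by parts along the closed curve (which produces no endpoint terms), trading kernel-derivatives for derivatives of $P$ and of $N$, until only Cauchy-type kernels remain. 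Since $B^{n-1/p}_{p,p}(\partial\Omega)$ is a multiplicative algebra ($np>2$), contains $N$, and since $W^{n,p}(\Omega)$-estimates for Cauchy-type layer potentials hold with constants polynomial in the order, this part contributes $\lesssim_n|\gamma|^{n+m_\gamma}\norm{N}_{B^{n-1/p}_{p,p}(\partial\Omega)}\norm{f}_{W^{n,p}(\Omega)}$. Adding the three contributions yields the asserted estimate. The factor $(1+\epsilon)^{|\gamma|}$ — in place of the power of $|\gamma|$ that the argument naively produces, but that I do not see how to retain on the domain — enters only through the $L^p$ operator norms of the iterated Beurling transforms appearing in the near interactions, and is obtained by a separate device: on $L^2(\Omega)$ every iterated Beurling transform compressed to $\Omega$ has operator norm $\le1$ (it is a compression of a unitary), and combining this with an interpolation argument one bounds the $W^{n,p}(\Omega)$ norm of the $m$-th iterate by $C_\epsilon(1+\epsilon)^{m}$ — this is \cite[Theorem 3.15]{PratsPlanarDomains}, and it is where the dependence of $C_\epsilon$ on $\epsilon$ originates. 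Finally the particular statement for $\Beurling^m$ follows by taking $\gamma=(-m-1,m-1)$ (so $m_\gamma=0$, $|\gamma|=2m$), multiplying by the constant $m/\pi$ of Example \ref{exBeurlingIterate}, and relabelling $(1+\epsilon)^{2m}$ as $(1+\tilde\epsilon)^{m}$.

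\textbf{Main obstacle.} Two steps are the real work. First, the reduction of the polynomial-part terms to Cauchy-type boundary integrals has to be carried out \emph{uniformly in $\gamma$}: taming a kernel $(z-w)^\gamma$ of large order requires several chained applications of Green's formula and of tangential integration by parts, and one must verify that the constants grow only polynomially in $|\gamma|$ and that $N\in B^{n-1/p}_{p,p}(\partial\Omega)$ is precisely the regularity that lets every step go through — essentially the extension of the Cruz--Tolsa computation from $\Beurling$ to the full family $T^\gamma$. Second, the exponential factor with base arbitrarily close to $1$: a crude composition bound gives only a fixed base $>1$, and reaching base $1+\epsilon$ forces the $L^2$-isometry/interpolation argument above, which has to be made compatible with the Besov trace estimates and with the polynomial prefactor $|\gamma|^{n+m_\gamma}$. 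This bookkeeping is the technical heart, carried out in \cite{PratsPlanarDomains}.
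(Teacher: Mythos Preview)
The paper does not actually prove this theorem: it is quoted verbatim from the companion article and introduced with the sentence ``In the companion article \cite{PratsPlanarDomains}, we proved the following theorem.'' There is therefore no proof in the present paper against which to compare your proposal, and you yourself flag this at the end (``This bookkeeping is the technical heart, carried out in \cite{PratsPlanarDomains}'').

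That said, as an outline of what the companion paper does, your sketch hits the right structural beats: the Whitney localisation with approximating polynomials \rf{eqChain}, the reduction of the polynomial pieces to boundary terms via Green's formula with the normal $N$ carrying the Besov regularity, and the $L^2$-unitarity/interpolation device for getting the base of the exponential down to $1+\epsilon$. What your sketch understates is that in \cite{PratsPlanarDomains} the dependence on $|\gamma|$ is tracked through an explicit hierarchy of operators $T^{(-2,0)+\lambda}_\Omega$ (these appear in the present paper in \rf{eqCommutatorBinomial} and \rf{eqCommutatorPolyCancels}), and that the uniform-in-$\gamma$ bookkeeping of the Green/tangential-integration-by-parts step is genuinely delicate --- your ``Main obstacle'' paragraph correctly identifies this, but the proposal itself does not contain the argument, only a plan for one. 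As a self-contained proof the proposal is therefore incomplete; as a roadmap to the cited result it is accurate.
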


\section{Quasiconformal mappings}\label{secQuasiconformal}
\subsection{Proof of Theorem \ref{theoInvertBeltrami}}\label{secOutline}
Consider $m\in \N$. Recall that $(\Beurling^m)_\Omega g =\chi_\Omega \Beurling^m(\chi_\Omega g)$ for $g \in L^1_{loc}$ (see Definition \ref{defOperador}) and $I_\Omega g=\chi_\Omega\, g$.  Note that $I_\Omega$ is the identity in $W^{n,p}(\Omega)$.  
Let us define 
$P_m:=I_\Omega+\mu \Beurling_\Omega+(\mu \Beurling_\Omega)^2+\cdots + (\mu \Beurling_\Omega)^{m-1}$. Since $W^{n,p}(\Omega)$ is a multiplicative algebra (by Theorem \ref{theoRunstSickel}), we have that $P_m$ is bounded in $W^{n,p}(\Omega)$. Note that
\begin{equation}\label{eqPmToIMu}
P_m\circ (I_\Omega-\mu \Beurling_\Omega)=(I_\Omega-\mu \Beurling_\Omega)\circ P_m=I_\Omega-(\mu \Beurling_\Omega)^m,
\end{equation}
and
\begin{align}\label{eqFactorizePm}
I_\Omega-(\mu \Beurling_\Omega)^m
\nonumber	&=(I_\Omega-\mu^m(\Beurling^m)_\Omega)+\mu^m((\Beurling^m)_\Omega-(\Beurling_\Omega)^m)+(\mu^m (\Beurling_\Omega)^m-(\mu \Beurling_\Omega)^m)\\
			& =A^{(1)}_m+\mu^m A^{(2)}_m+ A^{(3)}_m.
\end{align}
Note the difference between $(\Beurling_\Omega)^m g= \chi_\Omega \Beurling(\dots\chi_\Omega \Beurling(\chi_\Omega \Beurling(\chi_\Omega g)))$ and $(\Beurling^m)_\Omega g =\chi_\Omega \Beurling^m(\chi_\Omega g)$.  Next we will see that for $m$ large enough, the operator $I_\Omega-(\mu \Beurling_\Omega)^m$ is the sum of an invertible operator and a compact one.

First we will study the compactness of $A^{(3)}_m=\mu^m (\Beurling_\Omega)^m-(\mu \Beurling_\Omega)^m$. To start, writing $[\mu, \Beurling_\Omega](\cdot)$ for the commutator $\mu \Beurling_\Omega(\cdot)-\Beurling_\Omega(\mu \cdot)$ we have the telescopic sum
\begin{align*}
A^{(3)}_m	
	& =\sum_{j=1}^{m-1}\mu^{m-j}[\mu, \Beurling_\Omega]\left(\mu^{j-1}(\Beurling_\Omega)^{m-1}\right)+(\mu \Beurling_\Omega) (\mu^{m-1} (\Beurling_\Omega)^{m-1}-(\mu \Beurling_\Omega)^{m-1})\\
			& =\sum_{j=1}^{m-1}\mu^{m-j}[\mu, \Beurling_\Omega]\left(\mu^{j-1}(\Beurling_\Omega)^{m-1}\right)+(\mu \Beurling_\Omega) A^{(3)}_{m-1}.
\end{align*}
Arguing by induction we can see that $A^{(3)}_m $ can be expressed as a sum of operators bounded in $W^{n,p}(\Omega)$ which have $[\mu, \Beurling_\Omega]$ as a factor. It is well-known that the compactness of a factor implies the compactness of the operator (see for instance \cite[Section 4.3]{Schechter}). Thus, the following lemma, which we prove in Section \ref{secCompactness} implies the compactness of $A^{(3)}_m$.
\begin{lemma}\label{lemCompactness}
The commutator $[\mu, \Beurling_\Omega]$ is compact in $W^{n,p}(\Omega)$.
\end{lemma}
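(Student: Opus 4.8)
The plan is to show that $[\mu,\Beurling_\Omega]$ factors through a compact operator by exploiting that the commutator gains regularity. The core idea is classical: pointwise, $[\mu,\Beurling](\chi_\Omega g)(z) = \tfrac{-1}{\pi}\int_\Omega \frac{\mu(z)-\mu(w)}{(z-w)^2}g(w)\,dm(w)$, so the kernel $\frac{\mu(z)-\mu(w)}{(z-w)^2}$ is less singular than the Beurling kernel by one order because of the cancellation $\mu(z)-\mu(w)=O(|z-w|)$ for smooth $\mu$. Since $\mu\in W^{n,p}(\Omega)$ with $np>2$, $\mu$ is in particular Lipschitz on $\overline\Omega$ (by the Sobolev embedding \rf{eqSobolevEmbedding} applied to $\nabla^{n-1}\mu$, or directly when $n=1$), so this heuristic is rigorous at the level of the $0$-th order estimate. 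First I would set up the decomposition of $[\mu,\Beurling_\Omega]$ over a Whitney covering $\mathcal W$ of $\Omega$, using the approximating polynomials $\mathbf P^{n-1}_{3Q}\mu$ of Subsection \ref{secChainsAndPolynomials}: on each Whitney cube $Q$, write $\mu = \mathbf P^{n-1}_{3Q}\mu + (\mu - \mathbf P^{n-1}_{3Q}\mu)$, and split the commutator's action into a "local" part (interaction with nearby cubes, handled by the Poincar\'e inequality \rf{eqPoincare} which gives the extra power of $\ell(Q)$) and a "far" part (handled by the chain estimate \rf{eqChain} together with Lemmas \ref{lemmaximal} and \ref{lemTwoFoldedFunctionGuay}).

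The key steps, in order, are: (1) reduce compactness in $W^{n,p}(\Omega)$ to proving that $[\mu,\Beurling_\Omega]$ maps bounded sets of $W^{n,p}(\Omega)$ into a set that is precompact — by the Fréchet–Kolmogorov / Rellich-type criterion it suffices to show $[\mu,\Beurling_\Omega]$ is bounded from $W^{n,p}(\Omega)$ into some space that embeds compactly into $W^{n,p}(\Omega)$, e.g. a space with slightly more smoothness or the same smoothness but with a uniform decay/equicontinuity estimate. (2) Differentiate: apply $D^\alpha$ with $|\alpha|=n$ to $[\mu,\Beurling_\Omega]g$ and use the Leibniz rule \rf{eqLeibniz} to distribute derivatives between the $\mu$-factor and the Beurling factor; the top-order term where all $n$ derivatives land on the Beurling part is $\mu\,\Beurling_\Omega(D^\alpha g) - \Beurling_\Omega(\mu\, D^\alpha g) + (\text{lower order commutators of derivatives of }\mu)$, i.e. one still faces a commutator of the form $[\mu, \Beurling_\Omega]$ acting on $D^\alpha g$ plus terms where at least one derivative hits $\mu$. (3) For the terms where a derivative hits $\mu$, the factor $D^\beta\mu$ with $1\le|\beta|\le n$ has only $W^{n-|\beta|,p}$ regularity, so I would treat these as sums of operators of the form $(D^\beta\mu)\cdot \Beurling_\Omega(\cdot)$ composed with $D^{\alpha-\beta}$; compactness of these comes from the compact embedding $W^{n-|\beta|,p}\hookrightarrow W^{n-|\beta|-\epsilon,p}$ (or Hölder continuity of $\mu$) plus boundedness of $\Beurling_\Omega$ from Theorem \ref{theoGeometricNaive}. (4) For the genuine top-order commutator $[\mu,\Beurling_\Omega]$ acting on an $L^p$ function, run the Whitney-cube argument sketched above: on cube $Q$, the contribution from cubes $S$ with $\Dist(Q,S)\approx \ell(Q)$ uses $\|\mu - \mathbf P^{n-1}_{3Q}\mu\|$-type bounds to gain a power of $\ell(Q)$ against the $|z-w|^{-2}$ kernel, while the contribution from far cubes uses the chain decomposition and Lemma \ref{lemTwoFoldedFunctionGuay} with $\rho$ chosen suitably; the gained regularity (a fractional derivative, coming from the Lipschitz modulus of $\mu$) yields a bound into $W^{n,p}(\Omega)$ that is in fact compact because one can interpolate it against the trivial $L^p$ bound to land in an intermediate space with a compact embedding.

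The main obstacle I expect is step (4) — making the "gain of regularity" of the commutator precise and uniform enough to conclude compactness rather than mere boundedness, in the endpoint smoothness $W^{n,p}(\Omega)$ where the domain geometry (only $N\in B^{n-1/p}_{p,p}$) is exactly as rough as the regularity being tracked. In particular, one cannot afford to be cavalier about which derivatives of $\mu$ land where, since $\mu$ has no more than $W^{n,p}$ smoothness, and the estimate on $\Beurling_\Omega$ near $\partial\Omega$ must be the sharp one from \cite{PratsPlanarDomains}; the quantitative control over chains and shadows from Remark \ref{remChain} and the summation Lemma \ref{lemTwoFoldedFunctionGuay} is precisely what is needed to push through. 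A secondary subtlety is that $\supp\mu\subset\overline\Omega$ but $\mu$ need not vanish on $\partial\Omega$, so boundary terms in any integration-by-parts (Green's formula \rf{eqGreen}) must be accounted for; I would handle this by first treating $\mu\in C^\infty(\overline\Omega)$ and then approximating, using that $C^\infty(\overline\Omega)$ is dense in $W^{n,p}(\Omega)$ and that the operator norm bound is continuous in $\mu$ with respect to the $W^{n,p}$ norm (the set of compact operators is closed).
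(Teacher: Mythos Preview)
Your step (2) contains a genuine gap: the claimed Leibniz-type reduction to a top-order term $\mu\,\Beurling_\Omega(D^\alpha g)-\Beurling_\Omega(\mu\,D^\alpha g)$ presupposes that $D^\alpha$ commutes with $\Beurling_\Omega$, and this is false. The global Beurling transform $\Beurling$ is a Fourier multiplier and does commute with derivatives, but $\Beurling_\Omega=\chi_\Omega\Beurling(\chi_\Omega\cdot)$ does not: on $\Omega$ one has $D^\alpha\Beurling_\Omega h=\Beurling\big(D^\alpha_\C(\chi_\Omega h)\big)$, and the distributional derivative $D^\alpha_\C(\chi_\Omega h)$ carries boundary terms supported on $\partial\Omega$, not just $\chi_\Omega D^\alpha h$. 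These boundary terms are precisely what makes the domain problem delicate at the endpoint regularity $W^{n,p}(\Omega)$ with $N\in B^{n-1/p}_{p,p}(\partial\Omega)$, and your plan does not account for them. Consequently the reduction to ``commutator on $L^p$ plus lower-order terms with derivatives on $\mu$'' does not go through.

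The paper avoids this obstruction by a different organization. It first approximates $\mu$ by $C^\infty_c(\C)$ functions (as you also propose), and then for such smooth $\mu$ it proves a clean one-derivative smoothing: $[\mu,\Beurling_\Omega]:W^{n,p}(\Omega)\to W^{n+1,p}(\Omega)$ is bounded, which gives compactness via Rellich. To establish this, the paper decomposes $f$ (not $\mu$) on each Whitney cube $Q$ into a local piece $(f-\mathbf P^{n-1}_{3Q}f)\varphi_Q$, a far piece $(f-\mathbf P^{n-1}_{3Q}f)(\chi_\Omega-\varphi_Q)$, and the polynomial $\mathbf P^{n-1}_{3Q}f$. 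The local piece is compactly supported \emph{inside} $\Omega$, so there the commutator with $\Beurling_\Omega$ coincides with the commutator with $\Beurling$ and one can use the exact identity $[\mu,\Beurling]g=\Beurling(\bar\partial\mu\cdot\Cauchy g)-\partial\mu\cdot\Cauchy g$ (no boundary terms). The far piece is handled by differentiating the kernel $K_\mu(z,w)=(\mu(z)-\mu(w))/(z-w)^2$ directly and using \rf{eqChain} and Lemma~\ref{lemTwoFoldedFunctionGuay}, as you anticipated. The polynomial piece is reduced by a binomial expansion to a $T(1)$-type statement, namely $[\mu,\Beurling_\Omega](1)\in W^{n+1,p}(\Omega)$, and \emph{only then} is a Whitney decomposition of $\mu$ (your proposed move) employed, together with Theorem~\ref{theoGeometricPGtr2} for the operators $T^{(-2,0)+\lambda}_\Omega$ with $\lambda>\vec 0$. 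The point is that all derivatives are taken either on the explicit kernel, or after reducing to the global $\Beurling$ via compact support inside $\Omega$; at no stage does one attempt to commute $D^\alpha$ with $\Beurling_\Omega$.
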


Consider now $A^{(2)}_m=(\Beurling^m)_\Omega-(\Beurling_\Omega)^m$. We define the operator $\mathcal{R}_m g:=\chi_\Omega \Beurling\left(\chi_{\Omega^c} \Beurling^{m-1}(\chi_\Omega \, g)\right)$ whenever it makes sense. This operator can be understood as a (regularizing) reflection with respect to the boundary of $\Omega$. For every $g\in W^{n,p}(\Omega)$ we have that
\begin{align*}
A^{(2)}_m g
			& =\chi_\Omega\left( \Beurling\left((\chi_\Omega + \chi_{\Omega^c})\Beurling^{m-1}(\chi_\Omega \, g)\right)- \Beurling \left(\chi_\Omega\left((\Beurling_\Omega)^{m-1}g\right)\right)\right)\\
	& =\chi_\Omega \Beurling\left(\chi_{\Omega^c} \Beurling^{m-1}(\chi_\Omega g)\right)+ \chi_\Omega \Beurling \left(\chi_\Omega \left(\Beurling^{m-1}(\chi_\Omega\cdot)-(\Beurling_\Omega(\cdot))^{m-1}\right)g\right) =\mathcal{R}_m g + \Beurling_\Omega \circ A^{(2)}_{m-1}g.
\end{align*}
Note that by definition 
\begin{equation}\label{eqAmIsBounded}
\mathcal{R}_m=\left(A^{(2)}_m -\Beurling_\Omega \circ A^{(2)}_{m-1}\right)
\end{equation} 
 is bounded in $W^{n,p}(\Omega)$. In Section \ref{secCompactnessDeath} we will prove the compactness of $\mathcal{R}_m$, which, by induction, will prove the compactness of $A^{(2)}_m$.
\begin{lemma}\label{lemCompactnessDeath}
For every $m$, the operator $\mathcal{R}_m$ is compact in $W^{n,p}(\Omega)$.
\end{lemma}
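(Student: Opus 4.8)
The plan is to show that $\mathcal{R}_m$ is compact in $W^{n,p}(\Omega)$ by exploiting the crucial fact that the kernel of $\Beurling\left(\chi_{\Omega^c}\,\cdot\right)$, viewed as acting on functions supported away from $\partial\Omega$ relative to the evaluation point in $\Omega$, is smooth: when $z\in\Omega$ and $w\in\Omega^c$, the kernel $(z-w)^{-2}$ of $\Beurling$ has no singularity. More precisely, writing $g\in W^{n,p}(\Omega)$, one sets $G:=\Beurling^{m-1}(\chi_\Omega g)$, which by the mapping properties of $\Beurling$ (Example \ref{exBeurlingIterate}, Theorem \ref{theoGeometricPGtr2}) lies in $W^{n,p}_{loc}(\C)$ and, crucially, behaves well near $\partial\Omega$ from the outside. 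Then $\mathcal{R}_m g = \chi_\Omega\, T^{(-2,0)}(\chi_{\Omega^c} G)$ up to the constant $-1/\pi$. The first step is therefore to reduce to a statement about the operator $g\mapsto \chi_\Omega\, T^{(-2,0)}\bigl(\chi_{\Omega^c}\,\Beurling^{m-1}(\chi_\Omega g)\bigr)$ and to record, via \eqref{eqAmIsBounded} and Theorem \ref{theoGeometricPGtr2}, that this operator is already \emph{bounded} on $W^{n,p}(\Omega)$; compactness is the genuine new content.

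The key step is to identify a smoothing mechanism. I would split $\mathcal{R}_m g = \chi_\Omega T^{(-2,0)}(\chi_{\Omega^c} G)$ into a piece where the integration variable $w\in\Omega^c$ is far from $z\in\Omega$ (say $|z-w|>\delta$) and a piece where it is close. On the far piece the kernel $(z-w)^{-2}$ is smooth and bounded with all derivatives controlled by negative powers of $\delta$, so this contribution maps $W^{n,p}(\Omega)$ into $C^\infty(\overline\Omega)\cap W^{n+1,p}(\Omega)$ with norm depending on $\delta$; composed with the compact embedding $W^{n+1,p}(\Omega)\hookrightarrow W^{n,p}(\Omega)$ (Rellich–Kondrachov for the bounded Lipschitz, hence extension, domain $\Omega$) this piece is compact. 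For the near piece, $w\in\Omega^c$ with $|z-w|<\delta$ forces $z$ to lie within distance $\delta$ of $\partial\Omega$, i.e. in a thin collar $\Omega_\delta:=\{z\in\Omega:\dist(z,\partial\Omega)<\delta\}$; using the $L^p$-boundedness of $\Beurling$ together with the $W^{n,p}(\Omega)\to W^{n,p}(\Omega)$ bound for the whole operator (from \eqref{eqAmIsBounded}), one shows the near piece has operator norm $\to 0$ as $\delta\to 0$. Concretely one estimates, for each multiindex $|\alpha|\le n$, the quantity $\|D^\alpha(\chi_{\Omega_\delta}\mathcal{R}_m g)\|_{L^p(\Omega)}$ by splitting derivatives between the smooth kernel and $G$, controlling $\|\nabla^k G\|_{L^p(\Omega_\delta')}$ on a slightly larger collar by the $W^{n,p}$-boundedness of $\Beurling^{m-1}$ and absolute continuity of the $L^p$ norm, so that the near piece is a norm-limit-of-nothing error: $\|\mathcal{R}_m - (\text{far piece})_\delta\| \to 0$. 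Since each far piece is compact and the operator is the norm limit of compact operators, $\mathcal{R}_m$ is compact.

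An alternative, and perhaps cleaner, route — which I expect matches what the paper does given the forward reference to Subsection \ref{secTechnical} on generalizing the results of \cite{MateuOrobitgVerdera} — is to use the chain-and-approximating-polynomial machinery of Subsection \ref{secChainsAndPolynomials} directly: one writes the Whitney decomposition of $\Omega$, and for $Q\in\mathcal{W}$ estimates $\|\nabla^n\mathcal{R}_m g\|_{L^p(Q)}$ by exploiting that the relevant kernel, integrated against $\chi_{\Omega^c}G$ with $G$ having good behavior across $\partial\Omega$, is smooth on each Whitney cube with derivative bounds decaying in $\ell(Q)$; Lemma \ref{lemTwoFoldedFunctionGuay} then packages the resulting double sum over Whitney cubes. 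To upgrade boundedness to compactness one truncates: replace $\Omega$ by $\Omega$ minus a finite union of large Whitney cubes near the boundary, show the truncated operator has smooth kernel hence is compact by Rellich, and let the truncation exhaust $\Omega$, the tails going to zero by the summability just established.

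The main obstacle is the near-boundary piece. Away from $\partial\Omega$ everything is smooth and compactness is soft; the whole difficulty is that $\mathcal{R}_m g$ need not gain a full derivative uniformly up to the boundary, so one cannot simply say $\mathcal{R}_m:W^{n,p}(\Omega)\to W^{n+1,p}(\Omega)$ and invoke Rellich. What saves the argument is that the obstruction to gaining a derivative is concentrated on $\partial\Omega$ — precisely the reflection across the boundary — and this singular part lives on a set of measure zero, so on the collar $\Omega_\delta$ the $W^{n,p}$-norm of $\mathcal{R}_m g$ is small uniformly in $\|g\|_{W^{n,p}(\Omega)}$. Making this last estimate rigorous, i.e. quantifying the collar contribution using only the $L^p$-theory of $\Beurling$ plus the $W^{n,p}$-boundedness of its iterates, and in particular controlling the terms where all $n$ derivatives land on $G$ near $\partial\Omega$, is the technical heart; this is exactly where the generalization of \cite{MateuOrobitgVerdera} developed in Subsection \ref{secTechnical} enters.
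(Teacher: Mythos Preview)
Your second (Whitney-truncation) approach matches the paper's strategy: reduce to compactness of $\mathcal{T}_m := \partial^n \mathcal{R}_m : W^{n,p}(\Omega) \to L^p(\Omega)$ (using that $\mathcal{R}_m f$ is holomorphic in $\Omega$), approximate $\mathcal{T}_m$ by truncated operators $\mathcal{T}_m^i$ built from Whitney cubes of side-length $>2^{-i}$, show each $\mathcal{T}_m^i$ is compact (it lands in $W^{1,p}(\Omega)$, then Rellich), and show $\|\mathcal{T}_m - \mathcal{T}_m^i\|\to 0$. You correctly flag this last step as the technical heart and correctly point to Subsection~\ref{secTechnical}.

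However, your sketches of \emph{why} the tail is small both contain the same gap. In the first approach you invoke ``absolute continuity of the $L^p$ norm'' to make the collar piece small; but absolute continuity is pointwise in $g$, not uniform over $\|g\|_{W^{n,p}}\le 1$ --- if it were uniform, compactness would be immediate for \emph{any} bounded operator. Likewise, ``the singular part lives on a set of measure zero'' is not a uniform statement. What the paper actually does is rewrite $\mathcal{T}_m f(z)$ (via Fubini and Green's formula) as an integral over $\Omega$ against an explicit kernel $K_{\vec m}(z,\xi)$, and then Proposition~\ref{propoKernelExpression} splits $K_{\vec m}$ into a principal part $c\,\partial^n\Beurling\chi_\Omega(z)\cdot(\overline{\xi-z})^{m-1}(\xi-z)^{-m-1}$ plus Taylor-remainder terms carrying an extra $|z-\xi|^{\sigma_p}$ of decay (Lemma~\ref{lemKernelBounds}). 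The decisive point is that the smallness of the tail comes from the \emph{fixed} function $\partial^n\Beurling\chi_\Omega\in L^p(\Omega)$: its $L^p$-mass on the shrinking collar $\Omega\setminus\Omega_{i-i_0}$ tends to $0$, while the $f$-dependent factor multiplying it is merely bounded. This is what delivers uniformity in $f$; your outline does not identify this mechanism. The actual execution (Claim~\ref{claimSmallDifference}) is substantial: the error is reorganized via admissible chains, the local pieces use the kernel with $\vec m_0=(n+2,m+1,m+1)$, the nonlocal pieces use $\vec m_1=(n+2,m+1,m)$ together with the H\"older-type bound \eqref{eqBoundRmnjm}, and Lemma~\ref{lemBeurlingBump} is invoked to kill a spurious $\Beurling^m\varphi_S$ term. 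None of this is routine, and it does not reduce to the soft arguments you suggest.
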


Now, the following claim is the remaining ingredient for the proof of Theorem \ref{theoInvertBeltrami}.
\begin{claim}\label{claimInvertible}
 For $m$ large enough, $A^{(1)}_m $ is invertible.
\end{claim}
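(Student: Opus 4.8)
The plan is to prove that $A^{(1)}_m=I_\Omega-\mu^m(\Beurling^m)_\Omega$ is invertible in $W^{n,p}(\Omega)$ by showing that the operator norm of $\mu^m(\Beurling^m)_\Omega$ becomes strictly smaller than $1$ once $m$ is large, after which invertibility follows from the Neumann series $\sum_{j\ge 0}(\mu^m(\Beurling^m)_\Omega)^j$.

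First I would exploit that, since $p>2$, the space $W^{n,p}(\Omega)$ is a multiplicative algebra (Theorem \ref{theoRunstSickel}), so that for every $g\in W^{n,p}(\Omega)$,
$$\norm{\mu^m(\Beurling^m)_\Omega g}_{W^{n,p}(\Omega)}\lesssim \norm{\mu^m}_{W^{n,p}(\Omega)}\,\norm{(\Beurling^m)_\Omega}_{W^{n,p}(\Omega)\to W^{n,p}(\Omega)}\,\norm{g}_{W^{n,p}(\Omega)},$$
with an implicit constant depending only on $n,p,\Omega$. Write $k:=\norm{\mu}_{L^\infty(\Omega)}<1$. For $m\ge n$, the second part of Theorem \ref{theoRunstSickel} gives $\norm{\mu^m}_{W^{n,p}(\Omega)}\lesssim m^{n}k^{m-n}\norm{\mu}_{W^{n,p}(\Omega)}^{n}$, while the final displayed estimate of Theorem \ref{theoGeometricPGtr2} gives, for each $\epsilon>0$, a constant $C_\epsilon$ independent of $m$ with
$$\norm{(\Beurling^m)_\Omega}_{W^{n,p}(\Omega)\to W^{n,p}(\Omega)}\le C_\epsilon m^{n+1}\Big(\norm{N}_{B^{n-1/p}_{p,p}(\partial\Omega)}+(1+\epsilon)^m\Big).$$
Multiplying these estimates, there is a constant $C'_\epsilon$ depending on $n,p,\Omega,\epsilon$ and $\norm{\mu}_{W^{n,p}(\Omega)}$ but not on $m$ such that
$$\norm{\mu^m(\Beurling^m)_\Omega}_{W^{n,p}(\Omega)\to W^{n,p}(\Omega)}\le C'_\epsilon\, m^{2n+1}k^{m-n}\Big(\norm{N}_{B^{n-1/p}_{p,p}(\partial\Omega)}+(1+\epsilon)^m\Big).$$

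Now I would fix $\epsilon>0$ small enough that $k(1+\epsilon)<1$, which is possible precisely because $k<1$. Then $m^{2n+1}k^{m-n}\to 0$ and $m^{2n+1}k^{m-n}(1+\epsilon)^m=k^{-n}m^{2n+1}\big(k(1+\epsilon)\big)^m\to 0$ as $m\to\infty$, since a geometric sequence of ratio less than $1$ dominates any power of $m$. Hence there is $m_0$ so that $\norm{\mu^m(\Beurling^m)_\Omega}_{W^{n,p}(\Omega)\to W^{n,p}(\Omega)}<1$ for all $m\ge m_0$, and for such $m$ the operator $A^{(1)}_m$ is invertible in $W^{n,p}(\Omega)$ with inverse given by the Neumann series.

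I do not expect a genuine obstacle here: the argument is the standard Neumann series trick. The one delicate point --- and the reason Theorem \ref{theoGeometricPGtr2} is stated with an exponential bound whose base $1+\epsilon$ can be pushed arbitrarily close to $1$, rather than with a fixed exponential base --- is that the decay $k^m$ coming from $\norm{\mu}_{L^\infty}<1$ must beat the growth of $\norm{(\Beurling^m)_\Omega}$; a bound $C^m$ with $C$ fixed and possibly larger than $1/k$ would be useless, whereas the freedom to take the base below $1/k$ makes the competition go the right way. Everything else (the multiplicative-algebra estimate, the polynomial factors in $m$, and the requirement $m\ge n$) is routine.
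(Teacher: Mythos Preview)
Your proof is correct and follows essentially the same approach as the paper: both use the multiplicative-algebra estimate from Theorem~\ref{theoRunstSickel} to bound $\norm{\mu^m}_{W^{n,p}(\Omega)}$ by $m^n\norm{\mu}_{L^\infty}^{m-n}\norm{\mu}_{W^{n,p}(\Omega)}^n$, combine it with the $(1+\epsilon)^m$ bound of Theorem~\ref{theoGeometricPGtr2}, and then choose $1+\epsilon<1/\norm{\mu}_{L^\infty}$ so that the product tends to zero. Your closing remark about why the freedom in $\epsilon$ is essential is exactly the point.
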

\begin{proof}
Since $p>2$ we can use Theorem \ref{theoRunstSickel} to conclude that 
for every $g\in W^{n,p}(\Omega)$
\begin{align*}
\norm{\mu^m (\Beurling^m)_\Omega g}_{W^{n,p}(\Omega)}
	& \lesssim \norm{\mu^m}_{W^{n,p}(\Omega)}\norm{(\Beurling^{m})_\Omega g}_{W^{n,p}(\Omega)}\\
	& \lesssim m^{n} \norm{\mu}^{m-n}_{L^\infty}\norm{\mu}_{W^{n,p}(\Omega)}^n \norm{(\Beurling^m)_\Omega}_{W^{n,p}(\Omega)\to W^{n,p}(\Omega)}\norm{g}_{W^{n,p}(\Omega)}.
\end{align*}

By Theorem \ref{theoGeometricPGtr2}, for any $\epsilon>0$ there are constants depending on the Lipschitz character of $\Omega$ (and other parameters) but not on $m$, such that 
$$\norm{(\Beurling^m)_\Omega}_{W^{n,p}(\Omega)\to W^{n,p}(\Omega)} \lesssim m^{n+1}\left((1+\epsilon)^{m} + \norm{N}_{B^{n-1/p}_{p,p}(\partial\Omega)}\right).$$
In particular, if we choose $1+\epsilon<\frac1{\norm{\mu}_\infty}$, we get that for $m$ large enough, the operator norm $\norm{\mu^m (\Beurling^m)_\Omega}_{W^{n,p}(\Omega)\to W^{n,p}(\Omega)}<1$ and, thus, $A^{(1)}_m$ in \rf{eqFactorizePm} is invertible.
\end{proof}

\begin{proof}[Proof of Theorem \ref{theoInvertBeltrami}]
Putting together Lemmas \ref{lemCompactness} and \ref{lemCompactnessDeath}, Claim \ref{claimInvertible}, and \rf{eqFactorizePm}, we get that $I_\Omega-(\mu \Beurling_\Omega)^m$ can be expressed as the sum of an invertible operator and a compact one for $m$ big enough and, by \rf{eqPmToIMu}, we can deduce that $I_\Omega - \mu \Beurling_\Omega$ is a  Fredholm operator (see \cite[Theorem 5.5]{Schechter}). The same argument works with any other operator $I_\Omega-t \mu \Beurling_\Omega$ for $0<t<1/\norm{\mu}_\infty$. It is well known that the Fredholm index is continuous with respect to the operator norm on Fredholm operators (see \cite[Theorem 5.11]{Schechter}), so the index of $I_\Omega-\mu \Beurling_\Omega$ must be the same index of $I_\Omega$, that is, $0$.

It only remains to see that our operator is injective to prove that it is invertible. Since $\mu$ is continuous, by \cite{Iwaniec} the operator $I-\mu \Beurling$ is injective in $L^p$. Thus, if $g\in W^{n,p}(\Omega)$, and $(I_\Omega - \mu \Beurling_\Omega)g=0$, we define $G(z)= g(z)$ if $z\in\Omega$ and $G(z)=0$ otherwise, and then we have that 
$$(I - \mu \Beurling)G=(I - \mu \chi_\Omega \Beurling)(\chi_\Omega G)=(I_\Omega - \mu \Beurling_\Omega)g=0.$$
 By the injectivity of the former, we get that $G=0$ and, thus, $g=0$ as a function of $W^{n,p}(\Omega)$.
 
Now, remember that the principal solution of \rf{eqBeltrami} is $f(z)=\Cauchy h(z)+z$, where
$$h=(I-\mu \Beurling)^{-1} \mu,$$
that is, $h + \mu \Beurling(h)=\mu$, so $\supp(h)\subset \supp(\mu)\subset \overline{\Omega}$ and, thus, $\chi_\Omega h + \mu \Beurling_\Omega(h)=h + \mu \Beurling(h)=\mu$ modulo null sets, so 
 $$h|_\Omega=(I_\Omega-\mu \Beurling_\Omega)^{-1} \mu,$$
proving that $h\in W^{n,p}(\Omega)$. By Theorem \ref{theoCauchy} we have that $\Cauchy h\in L^p(\C)$. Since the  derivatives of the principal solution, $\overline{\partial}f=h$ and $\partial f = \Beurling h + 1=\Beurling_\Omega h+\chi_{\Omega^c} \Beurling h + 1$, are in $W^{n,p}(\Omega)$, we have $f\in W^{n+1,p}(\Omega)$.
\end{proof}

\subsection{Compactness of the commutator}\label{secCompactness}

\begin{proof}[Proof of Lemma \ref{lemCompactness}]
We want to see that for any $\mu\in W^{n,p}(\Omega)\cap L^\infty$, the commutator $[\mu, \Beurling_\Omega]$ is compact. The idea is to show that it has a regularizing kernel. In particular, we will prove that assuming some extra condition on the regularity of $\mu$, then the commutator maps $W^{n,p}(\Omega)$ to $W^{n+1,p}(\Omega)$.  This will imply the compactness of the commutator as a self-map of $W^{n,p}(\Omega)$ and, by a classical argument on approximation of operators, this will be extended to any given $\mu$.

First we will see that we can assume $\mu$ to be $C^\infty_c(\C)$ without loss of generality by an approxi-mation procedure. Indeed, since $\Omega$ is an extension domain, for every $\mu\in W^{n,p}(\Omega)$, there is a function $E\mu$ with $\norm{E\mu}_{W^{n,p}(\C)}\leq C \norm{\mu}_{W^{n,p}(\Omega)}$ such that $E\mu|_\Omega=\mu\chi_\Omega$. Now, $E \mu$ can be approximated by a sequence of functions $\{{\mu}_j\}_{j\in\N}\subset C^\infty_c(\C)$ in $W^{n,p}(\C)$ and one can define the operator $[\mu_j, \Beurling_\Omega]:W^{n,p}(\Omega)\to W^{n,p}(\Omega)$. Since $W^{n,p}(\Omega)$ is a multiplicative algebra, one can check that $\{[\mu_j, \Beurling_\Omega]\}_{j\in\N}$ is a sequence of operators converging to $[\mu, \Beurling_\Omega]$ in the operator norm. Thus, it is enough to prove that the operators $[\mu_j, \Beurling_\Omega]$  are compact in $W^{n,p}(\Omega)$ for all $j$ (see \cite[Theorem 4.11]{Schechter}).

Let $\mu$ be a $C^\infty_c(\C)$ function. We will prove that the commutator $[\mu,\Beurling_\Omega]$ is a smoothing operator, mapping $W^{n,p}(\Omega)$ into $W^{n+1,p}(\Omega)$. Consider $f\in W^{n,p}(\Omega)$, a Whitney covering $\mathcal{W}$ with appropriate constants and, for every $Q\in\mathcal{W}$, choose a bump function $\chi_{\frac32 Q}\leq \varphi_Q\leq \chi_{2Q}$ with $\norm{\nabla^j\varphi_Q}_{L^\infty}\lesssim\frac{C_j}{\ell(Q)^j}$. Recall that we defined $\mathbf{P}_{3Q}^{n-1} f$ to be the approximating polynomial of $f$ around $3Q$. Then, we split the norm in three terms,
\begin{align}\label{eqCommutatorF3Terms}
\norm{\nabla^{n+1}[\mu, \Beurling_\Omega]f}_{L^p(\Omega)}^p
			& \lesssim_p \sum_{Q\in\mathcal{W}}\norm{\nabla^{n+1}[\mu, \Beurling_\Omega]\left(\left(f-\mathbf{P}_{3Q}^{n-1} f\right)\varphi_Q\right)}_{L^p(Q)}^p\\
\nonumber	& \quad +\sum_{Q\in\mathcal{W}}\norm{\nabla^{n+1}[\mu, \Beurling_\Omega] \left(\left(f-\mathbf{P}_{3Q}^{n-1} f\right)(\chi_\Omega-\varphi_Q)\right)}_{L^p(Q)}^p \\
\nonumber	& \quad + \sum_{Q\in\mathcal{W}}\norm{\nabla^{n+1}[\mu, \Beurling_\Omega]\left(\mathbf{P}_{3Q}^{n-1} f\right)}_{L^p(Q)}^p =: \circled{1}+\circled{2}+\circled{3}.
\end{align}
			
First we study $\circled{1}$. In this case, we can use the following classical trick for compactly supported functions. Given $\varphi\in C^\infty_c(\C)$ and $g\in L^p$, then $\Cauchy g\in W^{1,p}(\supp(\varphi))$ by \rf{eqCauchyLocalized}. Therefore, we can use Leibniz' rule \rf{eqLeibniz} for the first order derivatives of $ \varphi \cdot \Cauchy g$ (see \cite[Section 5.2.3]{Evans}), and by Theorem \ref{theoCauchy} we get
\begin{align}\label{eqCommutesForSmooth}
\varphi \cdot \Beurling(g)- \Beurling(\varphi \cdot g)
\nonumber	& =\varphi \cdot \partial \Cauchy g - \Beurling(\varphi \cdot \overline{\partial} \Cauchy g)=   -\partial \varphi \cdot \Cauchy g +\partial (\varphi \cdot \Cauchy g)- \overline{\partial}\Beurling(\varphi \cdot \Cauchy g) + \Beurling(\overline{\partial}\varphi \cdot \Cauchy g)\\
			& =\Beurling(\overline{\partial}\varphi \cdot \Cauchy g) -\partial\varphi \cdot \Cauchy g .
\end{align}

Thus, for a fixed cube $Q$, since we assumed that $\mu\in C^\infty_c(\C)$, we have that
\begin{equation*}
[\mu, \Beurling]\left(\left(f-\mathbf{P}_{3Q}^{n-1} f\right)\varphi_Q\right)= \Beurling\left(\overline{\partial}\mu \cdot \Cauchy \left(\left(f-\mathbf{P}_{3Q}^{n-1} f\right)\varphi_Q\right)\right) -\partial\mu \cdot \Cauchy \left(\left(f-\mathbf{P}_{3Q}^{n-1} f\right)\varphi_Q\right).
\end{equation*}
Therefore, using the boundedness of the Beurling transform and the fact that it commutes with derivatives, we have that
\begin{align*}
\circled{1}
	& = \sum_{Q}\norm{\nabla^{n+1}[\mu, \Beurling]\left(\left(f-\mathbf{P}_{3Q}^{n-1} f\right)\varphi_Q\right)}_{L^p(Q)}^p\\
	& \lesssim_p \sum_{Q}\norm{\nabla^{n+1}\left(\overline \partial \mu\cdot  \Cauchy \left(\left(f-\mathbf{P}_{3Q}^{n-1} f\right)\varphi_Q\right) \right)}_{L^p}^p+ \sum_{Q}\norm{\nabla^{n+1}\left(\partial \mu\cdot \Cauchy \left(\left(f-\mathbf{P}_{3Q}^{n-1} f\right)\varphi_Q\right) \right)}_{L^p}^p\\
	& \leq \sum_{Q}\sum_{j=0}^{n+1}\norm{\mu}_{W^{n+2,\infty}}^p\norm{\nabla^{j} \Cauchy \left(\left(f-\mathbf{P}_{3Q}^{n-1} f\right)\varphi_Q\right)}_{L^p}^p
\end{align*}
and, using the identities $\partial \Cauchy =\Beurling$, $\overline \partial \Cauchy =Id$ (when $j>0$ in the previous sum) together with \rf{eqCauchyCompact} from Theorem \ref{theoCauchy} (when $j=0$) we can estimate
\begin{align*}
\circled{1}
	& \lesssim_p \norm{\mu}_{W^{n+2,\infty}}^p \sum_{Q}\left(\sum_{j=1}^{n+1}\norm{\nabla^{j-1}\left(\left(f-\mathbf{P}_{3Q}^{n-1} f\right)\varphi_Q\right)}_{L^p(2Q)}^p
		+ \ell(Q)^p\norm{f-\mathbf{P}_{3Q}^{n-1} f}_{L^p(2Q)}^p\right)
\end{align*}
 and, by the Poincar\'e inequality \rf{eqPoincare} we get
 \begin{align*}
\circled{1}
	& \lesssim_{n,p} \norm{\mu}_{W^{n+2,\infty}}^p \sum_{Q}\sum_{j=0}^{n+1} \ell(Q)^{(n+1-j)p}\norm{\nabla^{n}f}_{L^p(2Q)}^p
		\lesssim_{n,\Omega} \norm{\mu}_{W^{n+2,\infty}}^p \norm{\nabla^{n}f}_{L^p(\Omega)}^p.
\end{align*}

Second, we bound $\circled{2}$. Let $Q$ be a Whitney cube, let $z\in Q$ and let $\alpha\in \N^2$ with $|\alpha|=n+1$. Then, if we call
$$K_\mu(z,w)=\frac{\mu(z)-\mu(w)}{(z-w)^2},$$
then, since $z$ is not in the support of $\left(f-\mathbf{P}_{3Q}^{n-1} f\right)(\chi_\Omega-\varphi_Q)$, we have that
$$D^\alpha [\mu, \Beurling_\Omega]\left(\left(f-\mathbf{P}_{3Q}^{n-1} f\right)(\chi_\Omega-\varphi_Q)\right)(z)=\int_{\Omega}D^\alpha_z K_\mu(z,w)\left(f(w)-\mathbf{P}_{3Q}^{n-1} f(w)\right)(1-\varphi_Q(w))\, dm.$$
Note that
$$D^\alpha_z K_\mu(z,w)=(\mu(z)-\mu(w))D^\alpha_z \frac{1}{(z-w)^{2}} + \sum_{\gamma<\alpha}{\alpha \choose \gamma}D^{\alpha-\gamma}\mu(z) D^{\gamma}_z \frac{1}{(z-w)^{2}},$$
so using $|\mu(z)-\mu(w)|\leq \norm{\nabla\mu}_{L^\infty}|z-w|$ we get
$$|D^\alpha_z K_\mu(z,w)|\leq C_{n,\Omega}\norm{\mu}_{W^{n+1,\infty}} \frac{1}{|z-w|^{n+2}}.$$
 Using the duality expression of the $L^p$ norm, estimate \rf{eqChain} and Lemma \ref{lemTwoFoldedFunctionGuay}  we get
\begin{align*}
\circled{2}^\frac{1}{p}
	& \lesssim \norm{\mu}_{W^{n+1,\infty}}  \sup_{\norm{g}_{L^{p'}}\leq 1} \sum_{Q,S} \frac{\int_S\left|f(w)-\mathbf{P}_{3Q}^{n-1} f(w)\right|\, dm(w)}{\Dist(Q,S)^{n+2}} \int_Q \left|g(z)\right| \, dm(z)\\
	& \lesssim \norm{\mu}_{W^{n+1,\infty}}  \sup_{\norm{g}_{L^{p'}}\leq 1} \sum_{Q,S} \sum_{P\in[Q,S]} \frac{\norm{g}_{L^1(Q)} \norm{\nabla^n f}_{L^1(3P)} \Dist(P,S)^{n-1}\ell(S)^2}{\ell(P) \Dist(Q,S)^{n+2}} \\
	& \lesssim_{n,\Omega} \norm{\mu}_{W^{n+1,\infty}} \norm{\nabla^{n}f}_{L^p(\Omega)}.
\end{align*}
Next we use a $T(1)$ argument reducing $\circled{3}$ to the boundedness of $[\mu,\Beurling_\Omega](1)$. Consider the monomials $P_{Q,\gamma}(z):=(z-z_Q)^ \gamma$ where $z_Q$ stands for the center of $Q$. The Taylor expansion \rf{eqTaylorExpansion} of $\mathbf{P}_{3Q}^{n-1} f$ around $z_Q$  can be written as $\mathbf{P}_{3Q}^{n-1} f(z)=\sum_{|\gamma|<n}m_{Q,\gamma}P_{Q,\gamma}(z)$. Thus, we have that
$$-\pi[\mu,\Beurling_\Omega]\mathbf{P}_{3Q}^{n-1} f(z)
= \left[\mu,T^{(-2,0)}_\Omega\right]\mathbf{P}_{3Q}^{n-1} f(z)
=\sum_{|\gamma|<n} m_{Q,\gamma} \left[\mu,T^{(-2,0)}_\Omega\right] \left(P_{Q,\gamma}\right)(z),$$
and using the binomial expansion $(w-z_Q)^\gamma=\sum_{\lambda\leq\gamma} (-1)^\lambda{\gamma\choose\lambda}(z-w)^\lambda(z-z_Q)^{\gamma-\lambda}$ we have
\begin{align}\label{eqCommutatorBinomial}
-\pi[\mu,\Beurling_\Omega]\mathbf{P}_{3Q}^{n-1} f(z)
			& =\sum_{|\gamma|<n}m_{Q,\gamma} \sum_{\vec{0}\leq\lambda\leq\gamma}(-1)^\lambda{\gamma\choose\lambda}\left[\mu,T^{(-2,0)+\lambda}_\Omega\right](1)(z)\, P_{Q,\gamma-\lambda}(z),
\end{align}
that is,
\begin{align*}
\circled{3}
	& =\sum_{Q\in\mathcal{W}}\norm{\nabla^{n+1}[\mu, \Beurling_\Omega](\mathbf{P}_{3Q}^{n-1} f)}_{L^p(Q)}^p\\
	& \lesssim \sum_{|\gamma|<n}\sum_{\vec{0}\leq\lambda\leq\gamma} \sum_{Q\in\mathcal{W}}|m_{Q,\gamma}|^p \norm{\nabla^{n+1}\left(\left[\mu,T^{(-2,0)+\lambda}_\Omega\right](1)\cdot  P_{Q,\gamma-\lambda}\right)}_{L^p(Q)}^p.
\end{align*}
But every coefficient $|m_{Q,\gamma}|$ is bounded by $C\norm{f}_{W^{n-1,\infty}(Q)}$ by \rf{eqMGammaBounded} and all the derivatives of $P_{Q,\gamma}$ are uniformly bounded in $\Omega$. Therefore, we have that
\begin{align*}
\circled{3}
	& \lesssim \norm{f}_{W^{n-1,\infty}(\Omega)}^p\sum_{Q\in\mathcal{W}}\sum_{0\leq|\lambda|<n}  \norm{\left[\mu,T^{(-2,0)+\lambda}_\Omega\right]1}_{W^{n+1,p}(Q)}^p.
\end{align*}
Using the Sobolev Embedding Theorem, we get
\begin{align*}
\circled{3}
	& \lesssim \norm{f}_{W^{n,p}(\Omega)}^p\left(\sum_{0<|\lambda|<n}  \norm{\left[\mu,T^{(-2,0)+\lambda}_\Omega\right]1}_{W^{n+1,p}(\Omega)}^p + \sum_{Q\in\mathcal{W}}  \norm{\left[\mu,T^{(-2,0)}_\Omega\right]1}_{W^{n+1,p}(Q)}^p\right).
\end{align*}
Note that if $\lambda>\vec 0$, then the operator $T^{(-2,0)+\lambda}_\Omega$ has homogeneity $-2+\lambda_1+\lambda_2> - 2$ and, therefore, by Theorem \ref{theoGeometricPGtr2}, $T^{(-2,0)+\lambda}_\Omega: W^{n,p}(\Omega) \to W^{n+1,p}(\Omega)$ is bounded and, since $p>2$ and $W^{n+1,p}(\Omega)$ is a multiplicative algebra, we have that $\norm{\mu T^{(-2,0)+\lambda}_\Omega 1}_{W^{n+1,p}(\Omega)}^p +\norm{T^{(-2,0)+\lambda}_\Omega \mu}_{W^{n+1,p}(\Omega)}^p \lesssim_{n,p,\Omega} \norm{\mu}_{W^{n+1,p}(\Omega)}^p$. Therefore,
\begin{align*}
\circled{3}
	& \lesssim  \left(\norm{\mu}_{W^{n+1,p}(\Omega)}^p+  \norm{[\mu,\Beurling_\Omega](1)}_{W^{n+1,p}(\Omega)}^p\right) \norm{f}_{W^{n,p}(\Omega)}^p,
\end{align*}
so we have reduced the proof of Lemma \ref{lemCompactness} to the following claim.

\begin{claim}\label{claimCommutatorWnplus1p}
Let $2<p<\infty$, $n\in \N$. Given a bounded Lipschitz domain $\Omega$ with parameterizations in $B^{n+1-1/p}_{p,p}$ and a function $\mu\in C^\infty_c(\C)$, then $[\mu, \Beurling_\Omega](1)\in W^{n+1,p}(\Omega)$. 
\end{claim}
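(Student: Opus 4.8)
\medskip
\noindent\emph{Proof plan for Claim~\ref{claimCommutatorWnplus1p}.}
The plan is to exploit the extra order of cancellation in the commutator: its kernel $K_\mu(z,w):=\frac{\mu(z)-\mu(w)}{(z-w)^2}$ has size $\lesssim\norm{\nabla\mu}_{L^\infty}|z-w|^{-1}$ rather than $|z-w|^{-2}$. Since $\mu\in C^\infty_c(\C)$, for $z\in\Omega$ the difference of the two principal values collapses to an absolutely convergent integral,
$$[\mu,\Beurling_\Omega](1)(z)=\mu(z)\,\Beurling\chi_\Omega(z)-\Beurling(\mu\chi_\Omega)(z)=\frac{-1}{\pi}\int_\Omega\frac{\mu(z)-\mu(w)}{(z-w)^2}\,dm(w),$$
so the real task is to show that the operator with the order $-1$ kernel $K_\mu$, applied to $\chi_\Omega$, lands in $W^{n+1,p}(\Omega)$. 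I would do this by Taylor-expanding $\mu$ at $z$ to turn $K_\mu$ into a finite combination of convolution kernels covered by Theorem~\ref{theoGeometricPGtr2}, plus a genuinely smooth remainder. Writing $\mu(w)=\sum_{|\beta|\le n+1}\frac{D^\beta\mu(z)}{\beta!}(w-z)^\beta+R(z,w)$, the $\beta=\vec0$ term cancels $\mu(z)$, and since $\frac{(w-z)^\beta}{(z-w)^2}=(-1)^{|\beta|}K^{(\beta_1-2,\beta_2)}(z-w)$ one obtains, as functions on $\Omega$,
$$[\mu,\Beurling_\Omega](1)=\frac1\pi\sum_{1\le|\beta|\le n+1}\frac{(-1)^{|\beta|}}{\beta!}\,D^\beta\mu\cdot T^{(\beta_1-2,\beta_2)}_\Omega(1)\;+\;\mathcal R,\qquad \mathcal R(z):=\frac1\pi\int_\Omega\frac{R(z,w)}{(z-w)^2}\,dm(w).$$

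For the finite sum: each exponent $\gamma=(\beta_1-2,\beta_2)$ satisfies $\gamma_1+\gamma_2=|\beta|-2\ge-1$ and $\gamma\neq(-1,-1)$ (as $\beta_2\ge0$), so Theorem~\ref{theoGeometricPGtr2}---whose hypothesis on $\Omega$ is exactly the one assumed in the claim---gives $T^{(\beta_1-2,\beta_2)}_\Omega(1)\in W^{n+|\beta|,p}(\Omega)\subset W^{n+1,p}(\Omega)$ (using that $1\in W^{n,p}(\Omega)$ and $|\beta|\ge1$). Since $D^\beta\mu\in C^\infty(\overline\Omega)\subset W^{n+1,p}(\Omega)$ and $W^{n+1,p}(\Omega)$ is a multiplicative algebra by Theorem~\ref{theoRunstSickel} (here $p>2=d$), each product, hence the whole finite sum, lies in $W^{n+1,p}(\Omega)$.

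It remains to see $\mathcal R\in W^{n+1,p}(\Omega)$, and the point is that the remainder kernel is regular. From the integral form $R(z,w)=(n+2)\sum_{|\beta|=n+2}\frac{(w-z)^\beta}{\beta!}\int_0^1(1-t)^{n+1}(D^\beta\mu)(z+t(w-z))\,dt$, differentiating in $z$ (the $t$-integral passes through, $\mu$ being smooth with compact support) yields $|D^\alpha_zR(z,w)|\lesssim_{\mu,n,\diam\Omega}|z-w|^{\,n+2-|\alpha|}$ for $|\alpha|\le n+1$; combined with $|D^\lambda_z(z-w)^{-2}|\lesssim_{|\lambda|}|z-w|^{-2-|\lambda|}$ and the Leibniz rule this gives $|D^\alpha_z(R(z,w)(z-w)^{-2})|\lesssim_{\mu,n,\Omega}|z-w|^{\,n-|\alpha|}$ for $|\alpha|\le n+1$. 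As $n-|\alpha|\ge-1>-2$, these kernels are in $L^1_{loc}(\R^2)$ with $\int_\Omega|z-w|^{\,n-|\alpha|}\,dm(w)\lesssim_{n,\diam\Omega}1$ uniformly in $z$, so differentiating under the integral sign gives $\mathcal R\in W^{n+1,\infty}(\Omega)\subset W^{n+1,p}(\Omega)$ ($\Omega$ being bounded). Together with the previous paragraph this proves the claim, and hence completes the proof of Lemma~\ref{lemCompactness}.

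I expect the only real work to be bookkeeping in the last step: establishing the uniform pointwise bounds on $D^\alpha_zR$, and justifying the differentiation under the integral sign in the borderline order $|\alpha|=n+1$, where the kernel has size $|z-w|^{-1}$---there one differentiates $\frac1\pi\int_{\Omega\setminus B(z,\varepsilon)}$ and lets $\varepsilon\to0$, the contribution of $\partial B(z,\varepsilon)$ being $O(\varepsilon)$ because the lower-order $z$-derivatives of the kernel are no worse than $|z-w|^{-1}$. One also has to check that every exponent $(\beta_1-2,\beta_2)$ produced by the expansion lies in the range permitted by Theorem~\ref{theoGeometricPGtr2}, which it does since $\beta_2\ge0$.
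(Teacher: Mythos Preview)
Your argument is correct and is a genuinely cleaner route than the one in the paper. The paper proceeds by a Whitney decomposition, replacing $\mu$ on each cube by an approximating polynomial $\mathbf{P}^{n+2}_{3Q}\mu$ and splitting $\norm{\nabla^{n+1}[\mu,\Beurling_\Omega](1)}_{L^p}^p$ into a local part, a non-local part (handled by the chain inequality and Lemma~\ref{lemTwoFoldedFunctionGuay}), and a polynomial part; in the polynomial part the commutator $[P_{Q,\gamma},\Beurling_\Omega](1)$ is expanded exactly into the operators $T^{(-2,0)+\lambda}_\Omega(1)$ with $\lambda>\vec 0$, which is the local analogue of your identity. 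You instead Taylor-expand $\mu$ globally at the evaluation point $z$ and land immediately on the same family $T^{(\beta_1-2,\beta_2)}_\Omega(1)$ plus a single remainder $\mathcal R$; since $\mu\in C^\infty_c$, the remainder bounds $|D^\alpha_z(R(z,w)(z-w)^{-2})|\lesssim|z-w|^{\,n-|\alpha|}$ are uniform and the Whitney machinery is unnecessary. Both proofs rest on the same key input, Theorem~\ref{theoGeometricPGtr2}, for the terms with $\gamma_1+\gamma_2\ge -1$. Your approach buys brevity; the paper's buys uniformity with the surrounding arguments (\circled{1}--\circled{3}), which are forced to use Whitney cubes because there the quantity being localized is $f$, not the smooth $\mu$.

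For the write-up, your justification of $D^\alpha\mathcal R=\int_\Omega D^\alpha_z\bigl(R(z,w)(z-w)^{-2}\bigr)\,dm(w)$ at the top order $|\alpha|=n+1$ is cleanest done distributionally rather than through $\varepsilon$-truncations: fix $w$, observe that $G(\cdot,w):=R(\cdot,w)(z-w)^{-2}\in W^{n+1,1}(\Omega)$ with the pointwise derivatives as weak derivatives (the boundary term on $\partial B(w,\varepsilon)$ in the integration by parts is $O(\varepsilon^{\,n+1-|\beta|})$ at each step $|\beta|\le n$), then apply Fubini to $\int_\Omega D^\alpha\phi(z)\,\mathcal R(z)\,dm(z)$ using that $|D^\alpha_zG(z,w)|\lesssim|z-w|^{-1}\in L^1(\Omega\times\Omega)$. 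This avoids the moving-domain issue in your sketch and gives $\mathcal R\in W^{n+1,\infty}(\Omega)$ directly.
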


We know that  $[\mu, \Beurling_\Omega](1)=\mu \Beurling_\Omega (1)- \Beurling_\Omega (\mu) \in W^{n,p}(\Omega)$. We want to prove that $\nabla^{n+1}[\mu, \Beurling_\Omega]1\in L^p$. To do so, we split the norm in the same spirit of \rf{eqCommutatorF3Terms}, but chopping $\mu$ instead of $f$:
\begin{align*}
\norm{\nabla^{n+1}[\mu, \Beurling_\Omega](1)}_{L^p(\Omega)}^p
			& \lesssim_p \sum_{Q\in\mathcal{W}}\norm{\nabla^{n+1}\left[\left(\mu-\mathbf{P}^{n+2}_{3Q} \mu\right)\varphi_Q, \Beurling_\Omega\right](1)}_{L^p(Q)}^p\\
	& \quad +\sum_{Q\in\mathcal{W}}\norm{\nabla^{n+1}\left[\left(\mu-\mathbf{P}^{n+2}_{3Q} \mu\right)(\chi_\Omega-\varphi_Q), \Beurling_\Omega\right](1)}_{L^p(Q)}^p \\
	& \quad + \sum_{Q\in\mathcal{W}}\norm{\nabla^{n+1}\left[\mathbf{P}^{n+2}_{3Q} \mu, \Beurling_\Omega\right](1)}_{L^p(Q)}^p =: \circled{4}+\circled{5}+\circled{6}.
\end{align*}

First we consider $\circled{4}$. Since $\left(\mu-\mathbf{P}^{n+2}_{3Q} \mu\right)\varphi_Q \in C^\infty_c$, by \rf{eqCommutesForSmooth} we have that
\begin{align*}
\sum_{Q}\norm{\nabla^{n+1}\left[\left(\mu-\mathbf{P}^{n+2}_{3Q} \mu\right)\varphi_Q, \Beurling \right]\chi_\Omega}_{L^p(\C)}^p 
	& \lesssim_p \sum_Q \norm{\nabla^{n+1}\left(\partial \left(\left(\mu-\mathbf{P}^{n+2}_{3Q} \mu\right)\varphi_Q\right) \cdot \Cauchy \chi_\Omega\right)}_{L^p(2Q)}^p\\
	& \quad + \sum_Q \norm{\nabla^{n+1}\left(\overline \partial \left(\left(\mu-\mathbf{P}^{n+2}_{3Q} \mu\right)\varphi_Q\right) \cdot \Cauchy \chi_\Omega\right)}_{L^p(2Q)}^p
\end{align*}
and, using Leibniz' rule \rf{eqLeibniz}, H\"older's inequality, and the finite overlapping of double Whitney cubes,
\begin{align}\label{eqCommutator1Leibniz}
\circled{4}
	& \lesssim_p \sum_{j=0}^{n+1} \left(\sup_{Q\in\mathcal{W}}\norm{\nabla^{j+1}\left(\left(\mu-\mathbf{P}^{n+2}_{3Q} \mu\right)\varphi_Q\right)}_{L^\infty(2Q)}^p\right) \cdot\norm{\nabla^{n+1-j}\Cauchy \chi_\Omega}_{L^p(\Omega)}^p.
\end{align}

To bound $\circled{4}$ it remains to see that $\sup_{Q\in\mathcal{W}}\norm{\nabla^{j+1}\left(\left(\mu-\mathbf{P}^{n+2}_{3Q} \mu\right)\varphi_Q\right)}_{L^\infty(2Q)}^p<\infty$. The Poincar\'e inequality \rf{eqPoincare} leads to 
\begin{align}\label{eqCommutator1Poincare}
\norm{\nabla^{j+1}\left(\left(\mu-\mathbf{P}^{n+2}_{3Q} \mu\right)\varphi_Q\right)}_{L^\infty(2Q)}^p
			& \lesssim \norm{\nabla^{n+3}\mu}_{L^{\infty}(3Q)}^p.
\end{align}
Thus, the bounds \rf{eqCommutator1Leibniz} and \rf{eqCommutator1Poincare} yield 
\begin{equation*}
\circled{4}\leq C_{p,n,\diam\Omega}\norm{\nabla^{n+3}\mu}_{L^{\infty}(\Omega)}^p \norm{\Cauchy \chi_\Omega}_{W^{n+1,p}(\Omega)}^p,
\end{equation*}
which is finite by Theorem \ref{theoGeometricPGtr2}.

Next we face $\circled{5}$. Note that for a given Whitney cube $Q$, if $z\in Q$, then $\chi_\Omega(z)-\varphi_Q(z)=0$, so 
$$\circled{5}=\sum_{Q\in\mathcal{W}}\norm{\nabla^{n+1} \Beurling\left(\left(\mu-\mathbf{P}^{n+2}_{3Q} \mu\right)(\chi_\Omega-\varphi_Q)\right)}_{L^p(Q)}^p .$$
Moreover, for $z\in Q\in\mathcal{W}$, we have 
$$\partial^{n+1} \Beurling\left(\left(\mu-\mathbf{P}^{n+2}_{3Q} \mu\right)(\chi_\Omega-\varphi_Q)\right)(z)=c_n \int_{\Omega\setminus \frac32Q} \frac{\left(\mu(w)-\mathbf{P}^{n+2}_{3Q} \mu(w)\right)(1-\varphi_Q(w))}{(z-w)^{3+n}}dm(w).$$
Since $\overline{\partial} \Beurling \left(\left(\mu-\mathbf{P}^{n+2}_{3Q} \mu\right)(\chi_\Omega-\varphi_Q)\right)(z)=0$, only $\partial^{n+1}$ is non zero in the $(n+1)$-th gradient, so
$$\left|\nabla^{n+1} \Beurling\left(\left(\mu-\mathbf{P}^{n+2}_{3Q} \mu\right)(\chi_\Omega-\varphi_Q)\right)(z)\right|\lesssim \sum_{S\in\mathcal{W}}   \frac{1}{\Dist(Q,S)^{3+n}}\norm{\mu-\mathbf{P}^{n+2}_{3Q} \mu}_{L^1(S)}.$$
Consider an admissible chain $[S,Q]$. By \rf{eqChain} we have that
$$\norm{\mu-\mathbf{P}^{n+2}_{3Q} \mu}_{L^1(S)}\lesssim \sum_{P\in[S,Q]}\frac{\ell(S)^2 \Dist(P,S)^{n+2}}{\ell(P)}\norm{\nabla^{n+3} \mu}_{L^1(5P)}.$$
Combining all these facts with the expression of the norm by duality and Lemma \ref{lemTwoFoldedFunctionGuay}, we get
\begin{align*}
\circled{5}^{\frac1p}
	& \lesssim \sup_{g\in L^{p'} (\Omega): \norm{g}_{p'}\leq1} \sum_Q \int_Q g\, dm \sum_{S\in\mathcal{W}}   \frac{1}{\Dist(Q,S)^{3+n}}\sum_{P\in[S,Q]}\frac{\ell(S)^2 \Dist(P,S)^{n+2}}{\ell(P)}\norm{\nabla^{n+3} \mu}_{L^1(5P)}\\
	& \lesssim \diam (\Omega)^2 \sup_{g\in L^{p'} (\Omega): \norm{g}_{p'}\leq1} \sum_Q \sum_{S}   \sum_{P\in[S,Q]}\frac{\ell(S)^2\norm{\nabla^{n+3} \mu}_{L^1(5P)} \norm{g}_{L^1(Q)}}{\ell(P)\Dist(Q,S)^3} 
	 \lesssim \norm{\nabla^{n+3} \mu}_{L^p(\Omega)}.
\end{align*}

Finally we focus on 
$$\circled{6}=\sum_{Q\in\mathcal{W}}\norm{\nabla^{n+1}\left[\mathbf{P}^{n+2}_{3Q} \mu, \Beurling_\Omega\right](1)}_{L^p(Q)}^p .$$
 Consider first a monomial $P_{Q,\gamma}(z)=(z-z_Q)^ \gamma $ for a multiindex $\gamma\in \N^2$. Then, as we did in \rf{eqCommutatorBinomial}, we use the binomial expression $P_{Q,\gamma}(w)=\sum_{\lambda\leq\gamma} (-1)^{|\lambda|}{\gamma \choose \lambda} (z-w)^\lambda (z-z_Q)^{\gamma-\lambda}$ to deduce that
$$-\pi \Beurling_\Omega P_{Q,\gamma}(z)=T^{(-2,0)}_\Omega P_{Q,\gamma}(z)=\sum_{\vec{0}\leq\lambda\leq \gamma} (-1)^{|\lambda|}{\gamma \choose \lambda} T^{(-2,0)+\lambda}_\Omega(1)(z) (z-z_Q)^{\gamma-\lambda}.$$
Note that the term for $\lambda=\vec{0}$ in the right-hand side of this expression is $T^{(-2,0)}_\Omega (1)(z) P_{Q,\gamma}(z)$, so it cancels out in the commutator:
\begin{equation}\label{eqCommutatorPolyCancels}
-\pi [P_{Q,\gamma}, \Beurling_\Omega ](1)(z)=\sum_{\vec{0}<\lambda\leq \gamma} (-1)^{|\lambda|}{\gamma \choose \lambda} T^{(-2,0)+\lambda}_\Omega(1)(z) P_{Q,\gamma-\lambda}(z).
\end{equation}

Now, writting $\mathbf{P}^{n+2}_{3Q}\mu(z)=\sum_{|\gamma|\leq n+2}m_{Q,\gamma}P_{Q,\gamma}(z)$ we have that
\begin{align*}
\circled{6}
	& = \sum_{Q\in\mathcal{W}} \norm{\nabla^{n+1} [\mathbf{P}^{n+2}_{3Q}\mu, \Beurling_\Omega ](1)}_{L^p(Q)}^p\leq \sum_{Q\in\mathcal{W}}\sum_{\gamma\leq n+2} |m_{Q,\gamma}|^p \norm{\nabla^{n+1} [P_{Q,\gamma}, \Beurling_\Omega ](1)}_{L^p(Q)}^p,
\end{align*}
so using \rf{eqMGammaBounded} and \rf{eqCommutatorPolyCancels} together with Leibniz' rule \rf{eqLeibniz}, we get
\begin{align}\label{eqInequality6Final}
\circled{6}
\nonumber	& \lesssim \norm{\mu}_{W^{n+2,\infty}}^p \sum_{Q\in\mathcal{W}}\sum_{\gamma\leq n+2}\sum_{\vec{0}<\lambda\leq \gamma}  \sum_{j=0}^{n+1}\norm{\nabla^{j} T^{(-2,0)+\lambda}_\Omega(1)}_{L^p(Q)}^p\norm{ \nabla^{n+1-j}P_{Q,\gamma-\lambda}}_{L^\infty(Q)}^p\\
			& \leq C_{n,p,\Omega} \norm{\mu}_{W^{n+2,\infty}}^p \sum_{\vec{0}<\lambda: |\lambda|\leq n+2} \norm{T^{(-2,0)+\lambda}_\Omega (1)}_{W^{n+1,p}(\Omega)}^p.
\end{align}
In the last sum we have that $T^{(-2,0)+\lambda}_\Omega (1) \in W^{n+1,p}(\Omega)$ for all $\lambda >\vec{0}$ by  Theorem \ref{theoGeometricPGtr2} because the operators $T^{(-2,0)+\lambda}$ have homogeneity greater than $-2$. Thus, the right-hand side of \rf{eqInequality6Final} is finite.
\end{proof}

\subsection{Some technical details}\label{secTechnical}
Given $\vec{m}=(m_1,m_2,m_3)\in \N^3$, let us define the line integral 
\begin{equation}\label{eqKernelVectorM}
K_{\vec{m}}(z,\xi):=\int_{\partial\Omega} \frac{(\overline{w-\xi})^{m_3}}{(z-w)^{m_1}\,(w-\xi)^{m_2}}\, dw
 \end{equation}
for all $z, \xi \in \Omega$, where the path integral is oriented counterclockwise.

Given a $j$ times differentiable function $f$, we will write 
$$P^{j}_z(f)(\xi)=\sum_{|\vec{i}|\leq j}\frac{D^{\vec{i}}f(z)}{\vec{i}!} (\xi-z)^{\vec{i}}$$
for its $j$-th degree Taylor polynomial centered in the point $z$.

Mateu, Orobitg and Verdera study the kernel $K_{(2,m+1,m)}(z,\xi)$ for $m\in\N$ in \cite[Lemma 6]{MateuOrobitgVerdera} assuming the boundary of the domain $\Omega$ to be in $C^{1,\varepsilon}$ for $\varepsilon<1$. They prove the size inequality 
$$|K_{(2,m+1,m)}(z,\xi)|\lesssim \frac{1}{|z-\xi|^{2-\varepsilon}}$$
and a smoothness inequality in the same spirit. In \cite{CruzMateuOrobitg}, when dealing with the compactness of the operator $\mathcal{R}_mf =\chi_\Omega \Beurling\left(\chi_{\Omega^c} \Beurling^{m-1}(\chi_\Omega f)\right)$ on $W^{s,p}(\Omega)$ for $0<s<1$,  this is used to prove that the Beltrami coefficient $\mu \in W^{s,p}(\Omega)$ implies the principal solution of $\overline \partial f =\mu \partial f$ being in $W^{s+1,p}(\Omega)$ only for $s<\varepsilon$. These bounds are not enough for us in this form and, moreover, we will consider $m_1>2$ (this comes from differenciating the kernel of $\mathcal{R}_m$, something that we have to do in order to study the classical Sobolev spaces). Nevertheless, their argument can be adapted to the case of the boundary being in the space ${B}^{n+1-1/p}_{p,p}\subset C^{n,1-2/p}$ to get Proposition \ref{propoKernelExpression} below, which will be used to prove Lemma \ref{lemCompactnessDeath}. The proof has the same basic structure as in \cite{CruzMateuOrobitg}, but it is more sophisticated and makes use, among other tools, of a lemma of combinatorial type that is proven later on.

We will use some auxiliary functions.
\begin{definition}\label{defHm3xi}
Let us define
\begin{equation*}
H_{m_3,\xi}(w):=\frac{1}{2\pi i}\int_{\partial\Omega}\frac{(\overline{\tau-\xi})^{m_3}}{\tau-w}\, d\tau \mbox{\,\,\,\, for every }w,\xi\notin\partial\Omega,
\end{equation*}
and
\begin{equation}\label{eqhm3}
h_{m_3}(z):=\int_{\partial\Omega}\frac{(\overline{\tau-z})^{m_3}}{\tau-z} \,d\tau =2\pi iH_{m_3,z}(z) \mbox{\,\,\,\, for every }z\in\Omega.
\end{equation}
\end{definition}

\begin{proposition}\label{propoKernelExpression}
Let $\Omega$ be a bounded Lipschitz domain, and let $\vec{m}=(m_1,m_2,m_3)\in \N^3$ with $m_1\geq 3$, $m_2,m_3\geq1$ and $m_2 \leq m_1+m_3-2$. Then, the weak derivatives of order $m_3$ of $h_{m_3}$ in $\Omega$ are
\begin{equation}\label{eqDerivativesBeurlings}
\partial^{j}\overline\partial^{m_3-j}h_{m_3}=c_{m_3,j}\Beurling^{j}\chi_\Omega \mbox{, \,\,\,\, for }0\leq j\leq m_3.
\end{equation} 
Moreover, for every pair $z, \xi \in \Omega$ with $z\neq \xi$, we have that
\begin{equation}\label{eqExpressionTaylorAndMore}
K_{\vec{m}}(z,\xi)=c_{\vec{m}}{\partial^{m_1-2}}\Beurling\chi_\Omega (z) \frac{(\overline{\xi-z})^{m_3-1}}{(\xi-z)^{m_2}}+\sum_{j\leq m_2-1} \frac{c_{\vec{m},j} R_{m_1+m_3-3,j}^{m_3}(z,\xi)}{(\xi-z)^{m_2+m_1-1-j}},
\end{equation}
where
\begin{equation}\label{eqTaylorError}
R_{M,j}^{m_3}(z,\xi):=\partial^jh_{m_3}(\xi)-P^{M-j}_z(\partial^j h_{m_3})(\xi)
\end{equation}
is the Taylor error term of order $M-j$ for the function $\partial^j h_{m_3}$.
\end{proposition}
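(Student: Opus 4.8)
The plan is to treat the two assertions in turn; throughout, everything is governed by $h_{m_3}$ and its derivatives.

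First I would prove \rf{eqDerivativesBeurlings}. Since $h_{m_3}$ is a line integral over $\partial\Omega$ whose integrand is smooth in $z$ for $z\in\Omega$, the function $h_{m_3}$ is smooth on $\Omega$, so its weak derivatives are the classical ones and one may differentiate under the integral. As $\overline\partial_z$ annihilates $(w-z)^{-1}$ and lowers the power of the antiholomorphic factor, this gives the recursion $\overline\partial h_{m_3}=-m_3\,h_{m_3-1}$, with $h_0\equiv 2\pi i$ by Cauchy's formula; iterating, $\overline\partial^{m_3-j}h_{m_3}=(-1)^{m_3-j}\frac{m_3!}{j!}h_j$. It then remains to identify $\partial^j h_j(z)=j!\int_{\partial\Omega}(\overline{w-z})^j(w-z)^{-j-1}\,dw$: applying Green's formula \rf{eqGreen} on $\Omega\setminus\overline{B_\varepsilon(z)}$ and letting $\varepsilon\to0$ converts this line integral into a principal value area integral (the circular contribution vanishes by the angular cancellation $\int_0^{2\pi}e^{-2ij\theta}\,d\theta=0$, and $\overline\partial_w$ of the integrand equals $j\,(\overline{w-z})^{j-1}(w-z)^{-j-1}$ away from $w=z$, the distributional term being killed by the vanishing of $(\overline{w-z})^j$ there). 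Comparing with the formula for $\Beurling^j$ in Example \ref{exBeurlingIterate} yields $\partial^j h_j=c\,\Beurling^j\chi_\Omega$, and combining with the recursion gives \rf{eqDerivativesBeurlings}.

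For \rf{eqExpressionTaylorAndMore}, the first step is to factor out the $z$--dependence: using $(z-w)^{-m_1}=\frac{(-1)^{m_1-1}}{(m_1-1)!}\partial_z^{m_1-1}(z-w)^{-1}$ and differentiating under the line integral, $K_{\vec m}(z,\xi)=\frac{(-1)^{m_1-1}}{(m_1-1)!}\partial_z^{m_1-1}K_{(1,m_2,m_3)}(z,\xi)$. A partial fraction decomposition of $\frac{1}{(z-w)(w-\xi)^{m_2}}$ in the variable $w$, together with $\int_{\partial\Omega}(\overline{w-\xi})^{m_3}(z-w)^{-1}\,dw=-2\pi i\,H_{m_3,\xi}(z)$ and $\int_{\partial\Omega}(\overline{w-\xi})^{m_3}(w-\xi)^{-b}\,dw=\frac{1}{(b-1)!}\partial_\xi^{b-1}h_{m_3}(\xi)$, gives
$$K_{(1,m_2,m_3)}(z,\xi)=\frac{-2\pi i\,H_{m_3,\xi}(z)}{(z-\xi)^{m_2}}+\sum_{b=1}^{m_2}\frac{\partial_\xi^{b-1}h_{m_3}(\xi)}{(b-1)!\,(z-\xi)^{m_2-b+1}}.$$
Expanding $(\overline{\tau-\xi})^{m_3}$ binomially around $\tau=z$ in the definition of $H_{m_3,\xi}$ and invoking the recursion from the first part produces the closed form $2\pi i\,H_{m_3,\xi}(z)=\sum_{i_2=0}^{m_3}\frac{(\overline{\xi-z})^{i_2}}{i_2!}\overline\partial^{i_2}h_{m_3}(z)$, which is exactly the $\partial$--free part of the Taylor polynomial of $h_{m_3}$ centered at $z$.

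Finally I would apply $\partial_z^{m_1-1}$ and reorganize. Since $\overline{\xi-z}$ is $\partial_z$--constant, the Leibniz rule expresses $K_{\vec m}(z,\xi)$ as a sum of explicit binomial coefficients times $\partial^{i_1}\overline\partial^{i_2}h_{m_3}(z)\,(\overline{\xi-z})^{i_2}(\xi-z)^{-(m_1+m_2-1-i_1)}$, with $0\le i_1\le m_1-1$ and $0\le i_2\le m_3$, plus the terms $\partial_\xi^{j}h_{m_3}(\xi)\,(\xi-z)^{-(m_1+m_2-1-j)}$ coming from the finite sum above. Now: the two terms with $i_2=m_3$ and $i_1\in\{m_1-2,m_1-1\}$ vanish because $\overline\partial^{m_3}h_{m_3}$ is constant and $m_1\ge3$; the term $(i_1,i_2)=(m_1-1,m_3-1)$ equals, by \rf{eqDerivativesBeurlings} applied with $j=1$, a nonzero multiple of $\partial^{m_1-2}\Beurling\chi_\Omega(z)$, and multiplied by $(\overline{\xi-z})^{m_3-1}(\xi-z)^{-m_2}$ this is the leading term of \rf{eqExpressionTaylorAndMore}; and all the remaining terms (those with $i_1+i_2\le m_1+m_3-3$), together with the $\partial_\xi^{j}h_{m_3}(\xi)$ terms, reassemble into $\sum_{j\le m_2-1}c_{\vec m,j}\,R^{m_3}_{m_1+m_3-3,j}(z,\xi)(\xi-z)^{-(m_1+m_2-1-j)}$, on recalling $R^{m_3}_{M,j}(z,\xi)=\partial_\xi^j h_{m_3}(\xi)-P^{M-j}_z(\partial^j h_{m_3})(\xi)$ and $D^{\vec i}\partial^j h_{m_3}=\partial^{i_1+j}\overline\partial^{i_2}h_{m_3}$. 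This last step is the combinatorial heart of the proof: after matching denominators it reduces to the identity $\binom{m_1+m_2-i_1-2}{m_1-i_1-1}=\sum_{j=0}^{\min(i_1,m_2-1)}(-1)^j\binom{i_1}{j}\binom{m_1+m_2-j-2}{m_2-j-1}$, a specialization of the classical $\sum_j(-1)^j\binom{n}{j}\binom{A-j}{B-j}=\binom{A-n}{B}$; the hypothesis $m_2\le m_1+m_3-2$ is precisely what keeps the Taylor polynomials $P^{m_1+m_3-3-j}_z$ of non-negative order for every $j\le m_2-1$. I expect this bookkeeping, namely sorting the $\partial^{i_1}\overline\partial^{i_2}h_{m_3}$ terms into those that build the Taylor polynomials, the ones that vanish, and the single residual $\Beurling$ term, to be the main obstacle; it is presumably the combinatorial lemma alluded to in the text. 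The only genuinely analytic points are the excision arguments legitimizing the Green and principal value manipulations at $w=z$ and $w=\xi$, which are routine.
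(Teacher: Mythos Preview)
Your proof of \rf{eqDerivativesBeurlings} is essentially the same as the paper's (Remark~\ref{remDerivativesVeryNice}): differentiate under the line integral, and convert the result into an area integral via Green's formula and angular cancellation.

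For \rf{eqExpressionTaylorAndMore}, however, you take a genuinely different and more elementary route to the intermediate expression. The paper replaces $(\overline{w-\xi})^{m_3}$ by $H_{m_3,\xi}^-(w)$ via the Plemelj jump formula (which requires the $L^2(\partial\Omega)$ boundedness of the Cauchy transform on Lipschitz graphs) and then applies the Residue Theorem to the resulting meromorphic integrand, picking up residues at $z$ and $\xi$. You instead write $(z-w)^{-m_1}=\tfrac{(-1)^{m_1-1}}{(m_1-1)!}\partial_z^{m_1-1}(z-w)^{-1}$, decompose $\tfrac{1}{(z-w)(w-\xi)^{m_2}}$ by elementary partial fractions in $w$, and integrate term by term; this is purely algebraic and avoids both Plemelj and any boundary $L^2$ theory. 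The two routes land on the same expression (the paper's \rf{eqSolvingH}), and from that point on your combinatorial endgame coincides with the paper's: your identity $\sum_j(-1)^j\binom{n}{j}\binom{A-j}{B-j}=\binom{A-n}{B}$ is exactly Claim~\ref{claimCombinatorial} after the substitution $\binom{m_1+m_2-2-j}{m_2-1-j}=\binom{m_1+m_2-2-j}{m_1-1}$. One point worth making explicit in your write-up (the paper does this carefully) is that the reassembly into Taylor remainders requires ``phantom'' terms with $i_1>m_1-1$ or $i_2>m_3$, which are harmless because the corresponding binomial coefficient vanishes or $D^\alpha h_{m_3}\equiv 0$; this is implicit in your combinatorial identity but should be said.
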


We begin by noting some remarkable properties of these functions. 
\begin{lemma}[{see \cite[p. 143]{Verdera}}]
Let $\Omega$ be  a bounded Lipschitz domain. Given $\xi\notin\partial \Omega$ and $w\in\partial\Omega$, if we write $H_{m_3,\xi}^-(w)$ for the interior non-tangential limit of $H_{m_3,\xi}(\zeta)$ when $\zeta\to w$ and $H_{m_3,\xi}^+(w)$ for the exterior one, we have the Plemelj formula
\begin{equation}\label{eqPlemelj}
(\overline{w-\xi})^{m_3}=H_{m_3,\xi}^-(w)-H_{m_3,\xi}^+(w).
\end{equation}
 \end{lemma}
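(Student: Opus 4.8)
The identity \rf{eqPlemelj} is the classical Sokhotski--Plemelj jump relation for a Cauchy-type integral on a rectifiable curve, specialised to the H\"older (indeed $C^\infty$) density $g(\tau):=(\overline{\tau-\xi})^{m_3}$, which is a polynomial in $\tau$ and $\overline\tau$. The plan is to fix $w_0\in\partial\Omega$ and $\xi\notin\partial\Omega$, show directly that the interior and exterior nontangential limits $H_{m_3,\xi}^{\pm}(w_0)$ exist, and compute their difference. For $w\notin\partial\Omega$ write
\begin{equation*}
2\pi i\,H_{m_3,\xi}(w)=g(w_0)\int_{\partial\Omega}\frac{d\tau}{\tau-w}+\int_{\partial\Omega}\frac{g(\tau)-g(w_0)}{\tau-w}\,d\tau=:\mathrm{I}(w)+\mathrm{II}(w).
\end{equation*}

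For $\mathrm{I}$, the Cauchy integral formula (equivalently the argument principle, or Green's formula \rf{eqGreen}) gives $\frac{1}{2\pi i}\int_{\partial\Omega}\frac{d\tau}{\tau-w}=1$ when $w\in\Omega$ and $=0$ when $w\notin\overline\Omega$, since $\partial\Omega$ carries the positive orientation; hence the interior nontangential limit of $\mathrm{I}$ at $w_0$ equals $2\pi i\,g(w_0)$ and the exterior one equals $0$, so $\mathrm{I}$ contributes exactly $g(w_0)=(\overline{w_0-\xi})^{m_3}$ to the jump of $H_{m_3,\xi}$. For $\mathrm{II}$, factor the numerator: with $a=\overline{\tau-\xi}$, $b=\overline{w_0-\xi}$ one has $a^{m_3}-b^{m_3}=\overline{\tau-w_0}\sum_{k=0}^{m_3-1}a^k b^{m_3-1-k}=:\overline{\tau-w_0}\,Q(\tau)$, where $Q$ is again a polynomial in $\overline\tau$ and therefore bounded on the compact set $\partial\Omega$. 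Thus $\bigl|\tfrac{g(\tau)-g(w_0)}{\tau-w}\bigr|=|Q(\tau)|\,\tfrac{|\tau-w_0|}{|\tau-w|}$.

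The main point is then the uniform comparability $|\tau-w_0|\lesssim|\tau-w|$ for $\tau\in\partial\Omega$ and $w$ in a fixed nontangential approach region with vertex $w_0$: in such a region $\mathrm{dist}(w,\partial\Omega)\gtrsim|w-w_0|$, so $|\tau-w|\ge\mathrm{dist}(w,\partial\Omega)\gtrsim|w-w_0|$, and combining this with $|\tau-w_0|\le|\tau-w|+|w-w_0|$ yields $|\tau-w_0|\lesssim|\tau-w|$ in all cases. Consequently the integrand of $\mathrm{II}(w)$ is dominated uniformly in $w$ by the constant $\|Q\|_{L^\infty(\partial\Omega)}$, which is integrable since $\partial\Omega$ has finite length; as $w\to w_0$ nontangentially from either side the integrand converges pointwise (for $\tau\neq w_0$) to $\tfrac{g(\tau)-g(w_0)}{\tau-w_0}$, itself bounded by $\|Q\|_\infty$, so dominated convergence shows $\mathrm{II}(w)\to\int_{\partial\Omega}\tfrac{g(\tau)-g(w_0)}{\tau-w_0}\,d\tau$, the \emph{same} value from inside and from outside. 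Hence $\mathrm{II}$ contributes nothing to the jump, the limits $H_{m_3,\xi}^{\pm}(w_0)$ exist, and subtracting gives $H_{m_3,\xi}^{-}(w_0)-H_{m_3,\xi}^{+}(w_0)=g(w_0)=(\overline{w_0-\xi})^{m_3}$, which is \rf{eqPlemelj}. The only nonroutine ingredient is the nontangential distance comparison used above, which is standard for Lipschitz domains; the remainder is the classical jump computation, and the reader may also consult \cite[p.~143]{Verdera}.
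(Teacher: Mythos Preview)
Your proof is correct. The paper does not actually prove this lemma: it is stated with the attribution ``see \cite[p.~143]{Verdera}'' and no argument is given, so there is no ``paper's own proof'' to compare against beyond that citation. What you have written is precisely the classical Sokhotski--Plemelj jump computation, carried out cleanly for the smooth density $g(\tau)=(\overline{\tau-\xi})^{m_3}$: splitting off the constant $g(w_0)$ isolates the index integral (which jumps by $1$ across $\partial\Omega$), and the H\"older---here Lipschitz---cancellation in the remainder, together with the nontangential comparability $|\tau-w_0|\lesssim|\tau-w|$, makes the second integral continuous across the boundary by dominated convergence. All of this is valid on a bounded Lipschitz domain, and the argument is self-contained where the paper simply defers to the reference.
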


\begin{remark}\label{remDerivativesVeryNice}
Given $\vec{j}=(j_1,j_2)$ with $j_2\leq m_3$, taking partial derivatives on \rf{eqhm3} we get
$$D^{\vec{j}}h_{m_3}(z)=\partial^{j_1}\overline\partial^{j_2}h_{m_3}(z)=\frac{m_3! j_1!}{(m_3-j_2)!}(-1)^{j_2}\int_{\partial\Omega}\frac{(\overline{\tau-z})^{m_3-j_2}}{(\tau-z)^{1+j_1}} \, d\tau \mbox{\,\,\,\, for every }z\in\Omega$$
and, in particular, $h_{m_3}$ is infinitely many times differentiable in $\Omega$. Therefore, by Green's formula \rf{eqGreen} and the cancellation of the integrand (see \cite[(3.2)]{PratsPlanarDomains}), for $j>0$ we have
$$D^{(j,m_3-j)}h_{m_3}(z)= c_{m_3,j}\int_{\partial\Omega}\frac{(\overline{\tau-z})^{j}}{(\tau-z)^{1+j}} \, d\tau=c_{m_3,j} \int_{\Omega\setminus B(z,\varepsilon)}\frac{(\overline{w-z})^{j-1}}{(w-z)^{j+1}} dm(w)=c_{m_3,j}\Beurling^{j}\chi_\Omega(z)$$
for $\varepsilon <\dist(z,\partial\Omega)$ and, in case $j=0$, by the Residue Theorem
$$\overline\partial^{m_3}h_{m_3}(z)=c_{m_3}\int_{\partial\Omega}\frac{1}{\tau-z} \, d\tau=c_{m_3}2\pi i \chi_\Omega(z),$$
proving \rf{eqDerivativesBeurlings}.
\end{remark}

\begin{remark}
We can also relate the derivatives of both $h_{m_3}(z)$ and $H_{m_3,\xi}(z)$ for any pair $z,\xi \in \Omega$. By Definition \ref{defHm3xi} and the previous remark, we have that
\begin{align*}
2\pi i H_{m_3,\xi}(z)
	& =\sum_{l=0}^{m_3}\int_{\partial\Omega} {m_3 \choose l} \frac{(\overline{\tau-z})^{m_3-l}(\overline{z-\xi})^{l}}{\tau-z}\, d\tau\\
	& =\sum_{l=0}^{m_3} \frac{m_3!}{(m_3-l)! l!} \overline\partial^l h_{m_3}(z)\frac{(m_3-l)!}{m_3!} (-1)^l  (\overline{\xi-z})^{l} (-1)^l,
 \end{align*}
 that is,
\begin{align}\label{eqRelateHm3Andhm3Derivatives}
2\pi i \partial^j H_{m_3,\xi}(z)
	& =\sum_{l=0}^{m_3} \frac{1}{l!} D^{(j,l)} h_{m_3}(z)   (\overline{\xi-z})^{l}.
 \end{align}
 \end{remark}
 
\begin{proof}[Proof of Proposition \ref{propoKernelExpression}]
Since \rf{eqDerivativesBeurlings} is shown in Remark \ref{remDerivativesVeryNice}, it remains to prove \rf{eqExpressionTaylorAndMore}.

Consider $z,\xi\in \Omega$. Then $\frac{H_{m_3,\xi}(w)}{(z-w)^{m_1}\,(w-\xi)^{m_2}}$ decays at $\infty$ as $\frac{1}{|w|^{m_1+m_2+1}}$ and it is holomorphic in $\Omega^c$. Thus, by Cauchy's theorem and a  limiting argument we have that
$$K_{\vec{m}}(z,\xi)=\int_{\partial\Omega} \frac{(\overline{w-\xi})^{m_3}}{(z-w)^{m_1}\,(w-\xi)^{m_2}}\, dw=\int_{\partial\Omega} \frac{(\overline{w-\xi})^{m_3}+H^+_{m_3,\xi}(w)}{(z-w)^{m_1}\,(w-\xi)^{m_2}}\, dw,$$
 and using \rf{eqPlemelj}, 
$$K_{\vec{m}}(z,\xi)=(-1)^{m_1}\int_{\partial\Omega} \frac{H_{m_3,\xi}^-(w)}{(w-z)^{m_1}\,(w-\xi)^{m_2}}\, dw.$$

Note that  $H_{m_3,\xi}(w)$ is holomorphic in $\Omega$, implying that the integrand above is meromorphic in $\Omega$ with poles in $z$ and $\xi$. Moreover, $H_{m_3,\xi}^-\in L^2(\partial\Omega)$ by the boundedness of the Cauchy transform in $L^2(\Gamma)$ on a Lipschitz graph $\Gamma$ (see \cite{Verdera}, for instance). Thus, combining the Dominated Convergence Theorem and the Residue Theorem, we get
\begin{align*}
(-1)^{m_1}K_{\vec{m}}(z,\xi)
	& =2\pi i\left\{\frac{1}{(m_1-1)!}\partial^{m_1-1}\left[\frac{H_{m_3,\xi}(\cdot)}{(\cdot-\xi)^{m_2}}\right](z)+\frac{1}{(m_2-1)!}\partial^{m_2-1}\left[\frac{H_{m_3,\xi}(\cdot)}{(\cdot-z)^{m_1}}\right](\xi)\right\}.
\end{align*}
Therefore,
\begin{align*}
\frac{(-1)^{m_1}}{2\pi i}K_{\vec{m}}(z,\xi)
	& = \frac{1}{(m_1-1)!}\sum_{\substack{j_1,j_2\geq 0\\j_1+j_2=m_1-1}}\frac{(m_1-1)!}{j_1! j_2!} \frac{\partial^{j_2}H_{m_3,\xi}(z)}{(z-\xi)^{m_2+j_1}}(-1)^{j_1}\frac{(m_2+j_1-1)!}{(m_2-1)!}\\
	& \quad + \frac{1}{(m_2-1)!}\sum_{\substack{j_1,j_2\geq 0\\j_1+j_2=m_2-1}}\frac{(m_2-1)!}{j_1! j_2!} \frac{\partial^{j_2}H_{m_3,\xi}(\xi)}{(\xi-z)^{m_1+j_1}}(-1)^{j_1}\frac{(m_1+j_1-1)!}{(m_1-1)!} .
\end{align*}

Simplifying and using \rf{eqRelateHm3Andhm3Derivatives} on the first sum of the right-hand side and  \rf{eqhm3} on the second one, we get 
\begin{align}\label{eqSolvingH}
(-1)^{m_1+m_2}K_{\vec{m}}(z,\xi)
\nonumber	& = \sum_{\substack{j_1,j_2\geq 0\\j_1+j_2=m_1-1}} {m_2+j_1-1 \choose m_2-1}\frac{1}{j_2!} \frac{1}{(\xi-z)^{m_2+j_1}}\sum_{l=0}^{m_3} \frac{1}{l!} D^{(j_2,l)} h_{m_3}(z)   (\overline{\xi-z})^{l}\\
			& \quad +\sum_{\substack{j_1,j_2\geq 0\\j_1+j_2=m_2-1}}{m_1+j_1-1 \choose m_1-1}\frac{1}{j_2!} \frac{\partial^{j_2}h_{m_3}(\xi)}{(\xi-z)^{m_1+j_1}}(-1)^{j_2+1}.
\end{align}
The key idea for the rest of the proof is that the first term in the right-hand side of \rf{eqSolvingH} contains the Taylor expansion of the functions in the second one. 

Let $m_2-1\leq M\leq m_1+m_3-2$ (later on we will actually use the value $M=m_1+m_3-3$). Then, using the Taylor approximating polynomial of each $\partial^{j_2}h_{m_3}$ and multiplying by $(\xi-z)^{m_1+m_2-1}$ we get
\begin{align*}
-K_{\vec{m}}(z,\xi)(z-\xi)^{m_1+m_2-1}
	& = \sum_{j=0}^{m_1-1}{m_2+m_1-2-j \choose m_2-1}\frac{1}{j!} \sum_{l=0}^{m_3} \frac{1}{l!} D^{(j,l)} h_{m_3}(z)   ({\xi-z})^{(j,l)}\\
	& \quad - \sum_{j=0}^{m_2-1}{m_1+m_2-2-j \choose m_1-1}\frac{(-1)^{j}}{j!} (\xi-z)^{j} R_{M,j}^{m_3}(z,\xi)\\
	& \quad - \sum_{j=0}^{m_2-1}{m_1+m_2-2-j \choose m_1-1}\frac{(-1)^{j}}{j!}  \sum_{|\vec{i}|\leq M-j}\frac{D^{\vec{i}}\partial^{j}h_{m_3}(z)}{\vec{i}} (\xi-z)^{\vec{i}+(j,0)} .
\end{align*}

To simplify notation, let us define the error
\begin{equation}\label{eqErrorDefinition}
E_M=-K_{\vec{m}}(z,\xi)(z-\xi)^{m_1+m_2-1}+\sum_{j=0}^{m_2-1} {m_1+m_2-2 -j \choose m_1-1}\frac{(-1)^j}{j !} (\xi-z)^{j }R_{M,j}^{m_3}(z,\xi).
\end{equation}
Then,
\begin{align*}
E_M
	& = \sum_{\substack{\alpha \geq \vec{0}\\ \alpha\leq (m_1-1,m_3)}}{m_1+m_2-2-\alpha_1 \choose m_2-1}\frac{D^{\alpha} h_{m_3}(z) }{\alpha!}   ({\xi-z})^{\alpha}\\
	& \quad - \sum_{\substack{\alpha \geq \vec{0}\\ |\alpha|\leq M}}\sum_{0\leq j\leq \min\{m_2-1,\alpha_1\}}{m_1+m_2-2-j \choose m_1-1}\frac{(-1)^{j}}{j!} \frac{D^{\alpha} h_{m_3}(z)}{(\alpha_1-j)!\alpha_2!} (\xi-z)^{\alpha} .
\end{align*}
 Note that if $\alpha_2>m_3$, we have that 
 \begin{equation}\label{eqAlphaM3}
D^\alpha h_{m_3}(z)=0
\end{equation}
(apply \rf{eqDerivativesBeurlings} with $j=0$). The same happens for the case $\alpha=(\alpha_1,m_3)$ with $\alpha_1>0$. On the other hand, if $\alpha_1>m_1-1$, then ${m_1+m_2-2-\alpha_1 \choose m_2-1}=0$. By the same token, if $j>m_2-1$, ${m_1+m_2-2-j \choose m_1-1}=0$. Thus, we can write
\begin{multline}
E_M
\nonumber	 = \sum_{|\alpha|\leq m_1+m_3-2} \frac{D^{\alpha} h_{m_3}(z) }{\alpha!}   ({\xi-z})^{\alpha} \\
			 \quad \cdot \left[{m_1+m_2-2-\alpha_1 \choose m_2-1}  - \chi_{|\alpha|\leq M} \sum_{j\leq \alpha_1}(-1)^j {m_1+m_2-2-j \choose m_1-1} {\alpha_1 \choose j} \right].
\end{multline}
Note that we have added many null terms in the previous expression, but now the proof of the proposition is reduced to Claim \ref{claimCombinatorial} below which implies that
\begin{align*}
E_M
	& = \sum_{M<|\alpha|\leq m_1+m_3-2}  {m_1+m_2-2-\alpha_1 \choose m_2-1}  \frac{D^{\alpha} h_{m_3}(z) }{\alpha!}   ({\xi-z})^{\alpha}.
\end{align*}
Taking $M=m_1+m_3-3$ in this expression, only the terms with $|\alpha|=m_1+m_3-2$ remain and, arguing as before, if $\alpha_1>m_1-1$ then ${m_1+m_2-2-\alpha_1 \choose m_2-1} =0$  and 
if $\alpha_2\geq m_3$ then \rf{eqAlphaM3} holds (in case of equality, we can use that $|\alpha|>M\geq m_3$ because we assume that $m_1\geq 3$, granting that a derivative of the characteristic function is taken here). Summing up, by \rf{eqDerivativesBeurlings} we have that
$$E_{m_1+m_3-3}=\frac{D^{(m_1-1,m_3-1)} h_{m_3}(z) }{(m_1-1)!(m_3-1)!} ({\xi-z})^{(m_1-1,m_3-1)}= c_{\vec{m}} \partial^{m_1-2} \Beurling\chi_\Omega (z)  ({\xi-z})^{(m_1-1,m_3-1)}.$$
By \rf{eqErrorDefinition} this implies \rf{eqExpressionTaylorAndMore}.
\end{proof}

\begin{claim}\label{claimCombinatorial}For any natural numbers $m_1$, $m_2$ and $\alpha_1$ we have that
$${m_1+m_2-2-\alpha_1 \choose m_2-1}  = \sum_{j=0}^{\alpha_1}(-1)^j{\alpha_1 \choose j} {m_2+m_1-2-j \choose m_1-1} .$$
\end{claim}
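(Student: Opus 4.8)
The plan is to prove Claim~\ref{claimCombinatorial} by a generating-function (``snake oil'') computation. Write $[x^k]P$ for the coefficient of $x^k$ in a power series $P$. First I would observe that, in the regime in which the claim is actually used inside the proof of Proposition~\ref{propoKernelExpression} — namely $\alpha_1\le m_1-1$, so that $m_1+m_2-2-j\ge m_2-1\ge 0$ for every $0\le j\le\alpha_1$ — one has
$${m_1+m_2-2-j\choose m_1-1}=[x^{m_1-1}](1+x)^{m_1+m_2-2-j}\qquad\text{for }0\le j\le\alpha_1,$$
since for $j\le m_2-1$ both sides are the ordinary binomial coefficient, while for $j>m_2-1$ both vanish (the polynomial $(1+x)^{m_1+m_2-2-j}$ then has degree below $m_1-1$).

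Next I would substitute this into the right-hand side of the claim, pull the coefficient extraction outside the finite sum, and recognize a binomial expansion using $1-\tfrac{1}{1+x}=\tfrac{x}{1+x}$:
$$\sum_{j=0}^{\alpha_1}(-1)^j{\alpha_1\choose j}{m_1+m_2-2-j\choose m_1-1}=[x^{m_1-1}]\left\{(1+x)^{m_1+m_2-2}\Bigl(1-\tfrac{1}{1+x}\Bigr)^{\!\alpha_1}\right\}=[x^{m_1-1}]\Bigl(x^{\alpha_1}(1+x)^{m_1+m_2-2-\alpha_1}\Bigr).$$
The last quantity equals $[x^{m_1-1-\alpha_1}](1+x)^{m_1+m_2-2-\alpha_1}={m_1+m_2-2-\alpha_1\choose m_1-1-\alpha_1}$, and since $m_1-1-\alpha_1$ and $m_2-1$ are nonnegative and sum to $m_1+m_2-2-\alpha_1$, the symmetry ${N\choose k}={N\choose N-k}$ rewrites this as ${m_1+m_2-2-\alpha_1\choose m_2-1}$, which is the left-hand side.

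The computation itself is routine; the part that needs care — rather than the part that is genuinely hard — is the bookkeeping forced by the convention ${a\choose b}=0$ unless $0\le b\le a$. Concretely I would check that the exponents $m_1+m_2-2-j$ and $m_1+m_2-2-\alpha_1$ stay nonnegative, so that the powers of $(1+x)$ are honest polynomials and coefficient extraction commutes with the finite sum; that the terms one tacitly adds or discards are indeed zero; and that the closing appeal to the symmetry rule is legitimate. All of this holds in the range $\alpha_1\le m_1-1$ in which Claim~\ref{claimCombinatorial} is invoked. As a self-contained alternative one could argue by induction on $\alpha_1$: the base case $\alpha_1=0$ is precisely the symmetry of binomial coefficients, and the inductive step follows by splitting ${\alpha_1+1\choose j}={\alpha_1\choose j}+{\alpha_1\choose j-1}$, reindexing, and applying Pascal's rule once more on the right-hand side; this works but produces the analogous identity with $m_2$ lowered to $m_2-1$, so it requires a slightly more delicate double induction, which is why I prefer the generating-function route.
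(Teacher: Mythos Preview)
Your generating-function argument is correct and complete in the range $\alpha_1\le m_1-1$ (and in fact your computation works verbatim for the wider range $\alpha_1\le m_1+m_2-2$, since you only need the exponents $m_1+m_2-2-j$ to stay nonnegative). The paper takes a different route: it runs a single induction not on $\alpha_1$ alone but on an auxiliary parameter $\kappa_1$ ranging from $0$ to $\alpha_1$, with $\kappa_2=m_1-1-\alpha_1+\kappa_1$ and $\kappa_3=m_1+m_2-2-\alpha_1+\kappa_1$ moving in lockstep; the inductive step
\[
\sum_{i=0}^{\kappa_1}(-1)^i{\kappa_1\choose i}{\kappa_3-i\choose\kappa_2}=\sum_{j=0}^{\kappa_1+1}(-1)^j{\kappa_1+1\choose j}{\kappa_3+1-j\choose\kappa_2+1}
\]
is proved by two applications of Pascal's rule (one on the top entry, one to recombine ${\kappa_1\choose j}+{\kappa_1\choose j-1}$). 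This avoids the ``double induction'' you anticipated: by incrementing $\kappa_2$ and $\kappa_3$ together with $\kappa_1$, the paper keeps a single chain of identities. Your approach is shorter and makes the mechanism transparent as a coefficient extraction; the paper's is more elementary but requires spotting the right coupled parameters. Both proofs tacitly need the binomial arguments to stay nonnegative so that Pascal's rule (respectively, the polynomial interpretation of $(1+x)^N$) applies under the paper's convention $\binom{a}{b}=0$ unless $0\le b\le a$; the literal statement ``for any natural numbers'' is therefore a slight overstatement in either proof, but the restricted range is all that Proposition~\ref{propoKernelExpression} needs.
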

\begin{proof}
We have the trivial identity
$${m_1+m_2-2-\alpha_1 \choose m_2-1}={m_1+m_2-2-\alpha_1 \choose m_1-1-\alpha_1}  =\sum_{i= 0}^0(-1)^i {0\choose i}{m_1+m_2-2-\alpha_1-i\choose m_1-1-\alpha_1}.$$
Let $\kappa_1, \kappa_2, \kappa_3\in \Z$ with $\kappa_1\geq0$. We have that 
\begin{align*}
\sum_{i=0}^{\kappa_1}(-1)^i {\kappa_1 \choose i}{\kappa_3-i \choose \kappa_2}
	& = \sum_{i=0}^{\kappa_1}(-1)^i \left[{\kappa_1 \choose i}{\kappa_3+1-i \choose \kappa_2+1}-{\kappa_1 \choose i}{\kappa_3-i \choose \kappa_2+1}\right]\\
	& = \sum_{j=0}^{\kappa_1+1}(-1)^j\left[{\kappa_1 \choose j}{\kappa_3+1-j \choose \kappa_2+1}+{\kappa_1 \choose j-1}{\kappa_3+1-j \choose \kappa_2+1}\right]\\
	& = \sum_{j=0}^{\kappa_1+1}(-1)^j{\kappa_1+1 \choose j}{\kappa_3+1-j \choose \kappa_2+1}.
\end{align*}

Arguing by induction we get that
$$\sum_{i=0}^{0}(-1)^i{0 \choose i} {m_1+m_2-2-\alpha_1-i\choose m_1-1-\alpha_1}=\dots=  \sum_{j=0}^{ \alpha_1}(-1)^j{\alpha_1 \choose j} {m_2+m_1-2-j \choose m_1-1} .$$
\end{proof}


\begin{lemma}\label{lemTaylor}
Let $z, \xi$ be two points in an extension domain $\Omega\subset \R^d$, $M\geq 1$ a natural number, $p>d$ and $f\in W^{M+1,p}(\Omega)$. Then, writing $\sigma:={\sigma_{d,p}}=1-\frac{d}p$, the Taylor error term satisfies the estimate 
$$\left|f(\xi)-P^{M}_zf(\xi)\right|\leq C \norm{f}_{W^{M+1,p}(\Omega)} |z-\xi|^{M+\sigma}.$$
\end{lemma}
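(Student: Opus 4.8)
The statement is essentially a quantitative version of the classical fact that $W^{M+1,p}$ embeds into $C^{M,\sigma}$ when $p>d$, localized along the segment joining $z$ and $\xi$ (or, when $\Omega$ fails to be convex, along a chain replacing the segment). First I would reduce to the case $M=0$ applied to the derivatives of $f$: the Taylor error $f(\xi)-P^M_z f(\xi)$ can be written via the integral form of Taylor's remainder, and a standard manipulation shows it is controlled by $\sup_{|\beta|=M}\bigl|D^\beta f(\xi)-D^\beta f(z)\bigr|$ times $|z-\xi|^M$ up to lower-order terms that are themselves of the desired form; since each $D^\beta f$ with $|\beta|=M$ lies in $W^{1,p}(\Omega)$, it suffices to prove the oscillation bound $|g(\xi)-g(z)|\le C\norm{g}_{W^{1,p}(\Omega)}|z-\xi|^\sigma$ for $g\in W^{1,p}(\Omega)$, $p>d$.

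For that oscillation bound I would invoke the Sobolev Embedding Theorem quoted in \rf{eqSobolevEmbedding}: since $\Omega$ is an extension domain (hence a Lipschitz domain after extension, or one works directly with the extension $Eg\in W^{1,p}(\R^d)$), we have $g\in C^{0,\sigma}(\overline\Omega)$ with $\norm{g}_{C^{0,\sigma}(\overline\Omega)}\le C_\Omega\norm{g}_{W^{1,p}(\Omega)}$, and the Hölder seminorm is exactly the constant in $|g(\xi)-g(z)|\le C_\Omega\norm{g}_{W^{1,p}(\Omega)}|z-\xi|^\sigma$. Combining this with the Taylor remainder reduction from the first step, and bounding $\norm{D^\beta f}_{W^{1,p}(\Omega)}\le\norm{f}_{W^{M+1,p}(\Omega)}$ for $|\beta|=M$, gives the claimed inequality with a constant depending only on $d$, $p$, $M$, and the domain.

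The only genuine subtlety is the bookkeeping in the Taylor reduction: one must check that all the intermediate remainder terms $D^\beta f(\xi)-P^{M-|\beta|}_z(D^\beta f)(\xi)$ for $0<|\beta|<M$ are absorbed, which is a routine induction on $M$ (the $M$-th order remainder is the first-order remainder of an $(M-1)$-th order object with an extra factor $|z-\xi|$, and one iterates). So I expect no serious obstacle here; the statement is a soft consequence of \rf{eqSobolevEmbedding} together with elementary Taylor calculus, and the role of the lemma in the paper is simply to have the exponent $\sigma=1-d/p$ displayed explicitly for later use in estimating the remainder terms $R^{m_3}_{M,j}$ appearing in Proposition \ref{propoKernelExpression}.
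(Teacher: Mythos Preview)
Your approach is correct and follows the same overall strategy as the paper: extend $f$ to $\R^d$, use the Sobolev embedding $W^{M+1,p}\hookrightarrow C^{M,\sigma}$ from \rf{eqSobolevEmbedding}, and then bound the Taylor remainder of a $C^{M,\sigma}$ function. The difference lies only in how that last step is carried out.

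The paper works in $d=1$ (leaving the general case to the reader via restriction to paths), defines $F_t(u)=\frac{f(t)-P^M_u f(t)}{(t-u)^M}$, writes $F_t(u)=\lim_{\tau\to t}\bigl(F_t(u)-F_t(\tau)\bigr)$ and splits this into three pieces; after applying the Lagrange form of the $(M-1)$-th order remainder and some algebra these collapse to differences $f^{(M)}(c_1)-f^{(M)}(t)$ and $f^{(M)}(u)-f^{(M)}(t)$, bounded by the H\"older seminorm. Your route---via the integral remainder of order $M-1$, subtracting the top-order Taylor term to obtain an integral of $D^\alpha f\bigl(z+t(\xi-z)\bigr)-D^\alpha f(z)$ with $|\alpha|=M$, then applying the $C^{0,\sigma}$ bound on $D^\alpha f$---is the standard textbook argument and arguably cleaner: it handles all dimensions at once and avoids the limiting computation. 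One minor imprecision in your sketch: the remainder is controlled by the H\"older oscillation of $D^\beta f$ \emph{along the whole segment} from $z$ to $\xi$ (after extension), not merely by the endpoint difference $|D^\beta f(\xi)-D^\beta f(z)|$; but since you immediately invoke the full H\"older seminorm this does not affect the conclusion, and no separate induction on $M$ is actually needed.
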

\begin{proof}
Let us assume that $0\in\Omega$. Using the extension $E:W^{M+s,p}(\Omega) \to W^{M+s,p}_0(B(0,2 \, \diam(\Omega)))$ and the Sobolev Embedding Theorem, it suffices to prove the estimate for $f\in C^{M,{\sigma}}(\R^d)$. We will prove only the case $d=1$ leaving to the reader the generalization, which can be obtained by using the one-dimensional result over appropriate paths. In the real case, we define
$$F_t(u):=\frac{f(t)-P^M_u f(t)}{(t-u)^M}$$
for any $u \neq t\in \R$. We want to see that $|F_t(u)|\leq C \norm{f}_{C^{M, {\sigma}}} |u-t|^{\sigma}$ for $t\neq u$.
Note that the $M$-differentiability of $f$ implies that $\lim_{\tau\to t}F_t(\tau)= 0$. Thus, decomposing $P^M_u f(t)=P^{M-1}_u f(t)+\frac{1}{M!}f^{(M)}(u)(t-u)^M$, we have that
\begin{align}\label{eqBreakTaylor}
F_t(u) =\lim_{\tau\to t} F_t(u)-F_t(\tau)
\nonumber	& = \lim_{\tau\to t} \frac{\left(f(t)-P^{M-1}_u f(t)\right)-\left(f(t)-P^{M-1}_\tau f(t)\right)}{(t-u)^M}\\
\nonumber	& \quad +\lim_{\tau\to t} \left( f(t)-P^{M-1}_\tau f(t)\right)\left(\frac{1}{(t-u)^M}-\frac{1}{(t-\tau)^M}\right)\\
			& \quad +\lim_{\tau\to t} \frac{1}{M!} \left(-f^{(M)}(u)+f^{(M)}(\tau)\right)=\circled{I}+\circled{II}+\circled{III}.
\end{align}	
The first term in \rf{eqBreakTaylor} is 
$$\circled{I}=\frac{\left(f(t)-P^{M-1}_u f(t)\right)}{(t-u)^M}$$
and, using the mean value form of the remainder term of the Taylor polynomial, there exists a point $c_1\in (u,t)$ such that 
$$\circled{I}=\frac{f^{(M)}(c_1)}{M!}.$$

The second term in \rf{eqBreakTaylor} is
\begin{align*}
\circled{II}
	& = \lim_{\tau\to t} \left( f(t)-P^{M-1}_\tau f(t)\right)\left(\frac{(t-\tau)^M-(t-u)^M}{(t-u)^M(t-\tau)^M}\right)\\
	& = \lim_{\tau\to t} \left(f(t)-P^{M-1}_\tau f(t)\right)(u-\tau)\left(\sum_{j=1}^M\frac{1}{(t-u)^j (t-\tau)^{M+1-j}}\right)\\
	&=  \lim_{\tau\to t}\frac{u-\tau}{t-u} \left(\sum_{j=1}^M\frac{ f(t)-P^{M-1}_\tau f(t)}{(t-u)^{j-1} (t-\tau)^{M+1-j}}\right)= - \sum_{j=1}^M \lim_{\tau\to t}\frac{ f(t)-P^{M-1}_\tau f(t)}{(t-u)^{j-1} (t-\tau)^{M+1-j}}.
\end{align*}
Applying the Taylor Theorem, only the term $j=1$ has a non-null limit in the last sum, with
\begin{align*}
\circled{II}
	&=  - \frac{f^{(M)}(t)}{M!},
\end{align*}
so 
$$|F_t(u)|\leq \left|\frac{f^{(M)}(c_1)}{M!}-\frac{f^{(M)}(t)}{M!}\right|+\frac{1}{M!} \lim_{\tau\to t}\left|f^{(M)}(u)-f^{(M)}(\tau)\right|\leq \frac{2}{M!} \norm{f}_{C^{M, {\sigma}}} |u-t|^{{\sigma}}.$$
\end{proof}

Recall that in \rf{eqTaylorError} we defined the Taylor error terms
\begin{equation*}
R_{M,j}^{m_3}(z,\xi):=\partial^j h_{m_3}(\xi)-P^{M-j}_z(\partial^j h_{m_3})(\xi)
\end{equation*}
for $M, j,m_3\in \N$ and $z,\xi\in\Omega$. Next we give bounds on the size of these terms.

\begin{lemma}\label{lemKernelBounds}
Consider a real number $p>2$ and naturals $n, m\in \N$ and let $\Omega\subset \C$ be a bounded Lipschitz domain with parameterizations of the boundary in $B^{n+1-1/p}_{p,p}$. Writing ${\sigma_{p}}:=1-\frac2p$, then for $j\leq m$ we have that
\begin{equation}\label{eqBoundRmnjm+1}
|R_{m+n,j}^{m+1}(z,\xi)|\leq C_{\Omega,n,m} |z-\xi|^{m+n-j + {\sigma_{p}}}
\end{equation}
and, if $z_1,z_2,\xi\in\Omega$ with $|z_1-\xi|>\frac32|z_1-z_2|$, then
\begin{equation}\label{eqBoundRmnjm}
|R_{m+n-1,j}^{m}(z_1,\xi)-R_{m+n-1,j}^{m}(z_2,\xi)|\leq C_{\Omega,n,m} |z_1-z_2|^{\sigma_{p}} |z_1-\xi|^{m+n-j-1}.
\end{equation}
\end{lemma}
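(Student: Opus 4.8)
The plan is to reduce both estimates to the Taylor-error bound of Lemma \ref{lemTaylor} applied to the functions $\partial^j h_m$ (resp. $\partial^j h_{m+1}$), using the crucial observation from Remark \ref{remDerivativesVeryNice} and \rf{eqDerivativesBeurlings} that high-order derivatives of $h_{m_3}$ are (constants times) iterates of the Beurling transform of $\chi_\Omega$, which by Theorem \ref{theoGeometricPGtr2} (together with the hypothesis that the parameterizations lie in $B^{n+1-1/p}_{p,p}$, equivalently $N\in B^{n-1/p}_{p,p}(\partial\Omega)$) belong to $W^{n,p}(\Omega)$. For \rf{eqBoundRmnjm+1}: since $R^{m+1}_{m+n,j}(z,\xi)$ is the Taylor error of order $m+n-j$ of the function $\partial^j h_{m+1}$, Lemma \ref{lemTaylor} with $M=m+n-j$ gives $|R^{m+1}_{m+n,j}(z,\xi)|\lesssim \norm{\partial^j h_{m+1}}_{W^{m+n-j+1,p}(\Omega)}|z-\xi|^{m+n-j+\sigma_p}$, so the whole point is to check that $\partial^j h_{m+1}\in W^{m+n-j+1,p}(\Omega)$, i.e. that $h_{m+1}\in W^{m+n+1,p}(\Omega)$. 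First I would verify this: by Remark \ref{remDerivativesVeryNice}, for any multiindex $\beta$ with $|\beta|\geq m+1$ (so that at least one $\overline\partial$ hits, after using $\overline\partial^{m+1}h_{m+1}=c\,\chi_\Omega$, no—rather one uses that once $|\beta|\geq$ the $\overline\partial$-order reaches $m+1$) the derivative $D^\beta h_{m+1}$ is a constant multiple of $\Beurling^{k}\chi_\Omega$ for suitable $k\leq |\beta|$, which by Theorem \ref{theoGeometricPGtr2} lies in $W^{n,p}(\Omega)$; combined with the fact that $h_{m+1}$ itself is bounded on $\overline\Omega$ (it extends continuously, being a Cauchy-type integral of a bounded density on a Lipschitz curve), one concludes $h_{m+1}\in W^{m+n+1,p}(\Omega)$ with norm controlled by $\norm{N}_{B^{n-1/p}_{p,p}(\partial\Omega)}$ and the Lipschitz character. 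One has to be slightly careful near $|\beta|\leq m$, where $D^\beta h_{m+1}$ is given by the integral formula in Remark \ref{remDerivativesVeryNice} but is not yet a Beurling iterate; however these low-order derivatives are continuous up to $\overline\Omega$ by the same Cauchy-integral reasoning, so the only $L^p$-integrability constraint comes from the top orders, which are handled as above. This proves \rf{eqBoundRmnjm+1}.

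For \rf{eqBoundRmnjm}, the quantity $R^m_{m+n-1,j}(z_1,\xi)-R^m_{m+n-1,j}(z_2,\xi)$ is a difference of two Taylor errors of the function $g:=\partial^j h_m$ of order $M:=m+n-1-j$, centered at $z_1$ and at $z_2$ respectively, evaluated at the same point $\xi$, under the assumption $|z_1-\xi|>\tfrac32|z_1-z_2|$ (so $\xi$ is ``far'' compared to the spacing of the centers, in particular $|z_2-\xi|\approx|z_1-\xi|$). I would expand each Taylor remainder in its integral (or mean-value) form and write the difference as a telescoping combination: $g(\xi)-P^M_{z_1}g(\xi) - \big(g(\xi)-P^M_{z_2}g(\xi)\big) = P^M_{z_2}g(\xi)-P^M_{z_1}g(\xi)$, and then re-expand $P^M_{z_2}g$ around $z_1$, so that the leading terms cancel against $P^M_{z_1}g$ and what remains is controlled by the modulus of continuity of the top derivatives $\nabla^{M}g=\nabla^{m+n-1-j}\partial^j h_m=\nabla^{m+n-1}h_m$ in $C^{0,\sigma_p}$. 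Since $h_m\in W^{m+n,p}(\Omega)$ by the same argument as above (now $\partial^m\overline\partial^{\,0}$-type top derivatives are Beurling iterates in $W^{n-?}$—one checks $h_m\in W^{m+n,p}$ suffices because $m$ derivatives land before reaching the ``$\chi_\Omega$'' regime plus $n$ more of $\Beurling^{\,\cdot}\chi_\Omega\in W^{n,p}$), the Sobolev embedding $W^{n,p}(\Omega)\hookrightarrow C^{0,1-2/p}(\overline\Omega)$ \rf{eqSobolevEmbedding} gives $\nabla^{m+n-1}h_m\in C^{0,\sigma_p}(\overline\Omega)$. Then a standard estimate on the difference of Taylor polynomials of a $C^{M,\sigma_p}$ function at two nearby centers $z_1,z_2$, evaluated at a point $\xi$ with $|\xi-z_1|\gtrsim|z_1-z_2|$, yields a bound of the form $\norm{g}_{C^{M,\sigma_p}}\,|z_1-z_2|^{\sigma_p}\,|z_1-\xi|^{M}$, which is exactly \rf{eqBoundRmnjm}. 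Here one uses that each term in the expansion of $P^M_{z_2}g$ around $z_1$ beyond the $P^M_{z_1}g$ part comes with at least one factor $|z_1-z_2|$ and the corresponding powers of $|z_1-\xi|$ combine to $|z_1-\xi|^M$ thanks to the far-away assumption.

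The main obstacle I anticipate is the bookkeeping for \rf{eqBoundRmnjm}: handling the difference of two Taylor polynomials at different centers requires carefully tracking which combinations of derivatives cancel and showing that every surviving term carries the full factor $|z_1-z_2|^{\sigma_p}$ rather than $|z_1-z_2|^1$ (the $\sigma_p$ exponent, not an integer, comes precisely from applying Lemma \ref{lemTaylor}/the Hölder modulus of continuity to the \emph{top} derivative $\nabla^{m+n-1}h_m$, while all lower derivatives are genuinely Lipschitz). A clean way to organize this is: fix the multi-index bookkeeping so that $R^m_{m+n-1,j}(z_1,\xi)-R^m_{m+n-1,j}(z_2,\xi)$ is rewritten, via the integral remainder formula for $g$ expanded at $z_1$, as an integral over the segment $[z_1,\xi]$ of $(\nabla^{M+1}g)$ against a kernel, minus the analogous object at $z_2$, and then estimate the difference of these two integrals by splitting into (i) the difference of the integration paths/kernels, handled by Lipschitz continuity of the lower-order data and the far-away hypothesis, and (ii) a remaining term where the $C^{0,\sigma_p}$-modulus of $\nabla^{m+n-1}h_m$ enters through a difference $\nabla^{m+n-1}h_m(\zeta_1)-\nabla^{m+n-1}h_m(\zeta_2)$ with $|\zeta_1-\zeta_2|\lesssim|z_1-z_2|$. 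A secondary nuisance is justifying the regularity claims $h_m\in W^{m+n,p}(\Omega)$ and $h_{m+1}\in W^{m+n+1,p}(\Omega)$ at the low-order derivative levels, but this is routine given Remark \ref{remDerivativesVeryNice} and the continuity of Cauchy-type integrals of bounded densities on Lipschitz curves up to the closure.
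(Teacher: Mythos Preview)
Your proposal is correct and follows essentially the same route as the paper. For \rf{eqBoundRmnjm+1} your argument is identical to the paper's: establish $\partial^j h_{m+1}\in W^{m+n-j+1,p}(\Omega)$ via \rf{eqDerivativesBeurlings} and Theorem~\ref{theoGeometricPGtr2}, then apply Lemma~\ref{lemTaylor}.

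For \rf{eqBoundRmnjm} your primary idea (``re-expand $P^M_{z_2}g$ around $z_1$ so that leading terms cancel against $P^M_{z_1}g$'') is exactly what the paper does, but the paper executes it more cleanly and avoids the integral-remainder/path-splitting backup you sketch in your ``main obstacle'' paragraph. Concretely, the paper uses the binomial expansion $(\xi-z_2)^{\vec{j}}=\sum_{\vec{i}\leq\vec{j}}{\vec{j}\choose\vec{i}}(z_1-z_2)^{\vec{j}-\vec{i}}(\xi-z_1)^{\vec{i}}$ to obtain the exact algebraic identity
\[
P_{z_1}^{M}f(\xi)-P_{z_2}^{M}f(\xi)=\sum_{|\vec{i}|\leq M}\frac{(\xi-z_1)^{\vec{i}}}{\vec{i}!}\Bigl(D^{\vec{i}}f(z_1)-P_{z_2}^{M-|\vec{i}|}D^{\vec{i}}f(z_1)\Bigr),
\]
and then applies Lemma~\ref{lemTaylor} to \emph{each} bracket (not only the top one), yielding $|\xi-z_1|^{i}\,|z_1-z_2|^{M-i+\sigma_p}$ for every $i\leq M$; the separation hypothesis $|z_1-z_2|\lesssim|z_1-\xi|$ then converts every term to $|\xi-z_1|^{M}|z_1-z_2|^{\sigma_p}$. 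This sidesteps the bookkeeping you anticipated: there is no need to distinguish ``top'' from ``lower'' derivatives, no mean-value or integral remainder form, and no path-splitting.
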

\begin{proof}
Recall that $\Beurling^k\chi_\Omega \in W^{n,p}({\Omega})$ for every $k$ by Theorem \ref{theoGeometricPGtr2}. Thus,  by \rf{eqDerivativesBeurlings} we have that $\nabla^{m+1}h_{m+1}\in W^{n,p}({\Omega})$ and, since $h_{m+1}$ is bounded in $\Omega$ as well (take absolute values in \rf{eqhm3}), we have that $\partial^j h_{m+1}\in W^{n+m+1-j,p}({\Omega})$ for $0\leq j\leq m+n$. By Lemma \ref{lemTaylor}, it follows that
$$|R_{m+n,j}^{m+1}(z,\xi)|\leq C \norm{\partial^j h_{m+1}}_{W^{m+n-j+1,p}(\Omega)} |z-\xi|^{m+n-j + {\sigma_{p}}}.$$

The second inequality is obtained by the same procedure as \cite[Lemma 7]{MateuOrobitgVerdera}. We quote it here for the sake of completeness. Assume that $z_1,z_2,\xi\in\Omega$ with $|z_1-\xi|>\frac32|z_1-z_2|$. Then
$$R_{m+n-1,j}^{m}(z_1,\xi)-R_{m+n-1,j}^{m}(z_2,\xi)=P_{z_1}^{m+n-1-j}\partial^j h_m(\xi)-P_{z_2}^{m+n-1-j}\partial^j h_m(\xi).$$
But for a natural number $M$ and a function $f\in C^{M,{\sigma_{p}}}(\overline\Omega)$ one has that
\begin{align*}
P_{z_1}^{M}f(\xi)-P_{z_2}^{M}f(\xi)
	& =\sum_{|\vec{i}|\leq M}\frac{D^{\vec{i}}f(z_1)}{\vec{i}!}(\xi-z_1)^{\vec{i}}-\sum_{|\vec{j}|\leq M}\frac{D^{\vec{j}}f(z_2)}{\vec{j}!}(\xi-z_2)^{\vec{j}}.
\end{align*}
Since $(\xi-z_2)^{\vec{j}}=\sum_{\vec{i}\leq\vec{j}}{\vec{j} \choose \vec{i}}(z_1-z_2)^{\vec{j}-\vec{i}}(\xi-z_1)^{\vec{i}}$, one can write
\begin{align*}
P_{z_1}^{M}f(\xi)-P_{z_2}^{M}f(\xi)
	& =\sum_{|\vec{i}|\leq M}\frac{D^{\vec{i}}f(z_1)}{\vec{i}!}(\xi-z_1)^{\vec{i}}-\sum_{|\vec{j}|\leq M}\frac{D^{\vec{j}}f(z_2)}{\vec{j}!}\sum_{\vec{i}\leq\vec{j}}{\vec{j} \choose \vec{i}}(z_1-z_2)^{\vec{j}-\vec{i}}(\xi-z_1)^{\vec{i}}\\
	& =\sum_{|\vec{i}|\leq M}\frac{(\xi-z_1)^{\vec{i}}}{\vec{i}!}\left({D^{\vec{i}}f(z_1)} -\sum_{\substack{|\vec{j}|\leq M\\\vec{i}\leq\vec{j}}}\frac{D^{\vec{j}}f(z_2)}{(\vec{j}-\vec{i})}(z_1-z_2)^{\vec{j}-\vec{i}}\right)\\
	& =\sum_{|\vec{i}|\leq M}\frac{(\xi-z_1)^{\vec{i}}}{\vec{i}!}\left({D^{\vec{i}}f(z_1)} -P_{z_2}^{M-|\vec{i}|}D^{\vec{i}}f(z_1)\right).
\end{align*}
Therefore, Lemma \ref{lemTaylor} may be applied to obtain
\begin{align*}
|P_{z_1}^{M}f(\xi)-P_{z_2}^{M}f(\xi)|
	& \lesssim \sum_{i\leq M} |\xi-z_1|^{i} \norm{f}_{W^{M+1,p}(\Omega)} |z_1-z_2|^{M-i+{\sigma_{p}}}\\
	& \lesssim |\xi-z_1|^{M} |z_1-z_2|^{{\sigma_{p}}}  \norm{f}_{W^{M+1,p}(\Omega)}.
\end{align*}

\end{proof}

We finish this section with a short lemma that we will also use in the proof of Lemma \ref{lemCompactnessDeath}. We say that a function is radial if $f(re^{i\theta})=f(r)$ for every $\theta \in (0,2\pi)$.
\begin{lemma}\label{lemBeurlingBump}
Let $f$ be a radial function in $L^2(\C)$ such that $f|_\DDD\equiv 1$. Then, for every $m \geq 1$,
\begin{equation*}
\Beurling^m f (z)=0\mbox{\quad\quad for }z\in \DDD.
\end{equation*}
\end{lemma}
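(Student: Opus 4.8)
The plan is to exploit the fact that the Beurling transform $\Beurling=\frac{-1}{\pi}T^{(-2,0)}$ acts diagonally on the homogeneous decomposition of radial functions. Concretely, I would first reduce to the case of a single ``annular layer'': write $f=\chi_{\DDD}+g$ where $g$ is radial, supported in $\{|z|\ge 1\}$ (up to a null set), and in $L^2$. Since $\Beurling$ is linear and bounded on $L^2$, it suffices to show $\Beurling^m(\chi_{\DDD})(z)=0$ and $\Beurling^m g(z)=0$ for $z\in\DDD$. The first is the classical computation $\Beurling\chi_{\DDD}(z)=0$ for $|z|<1$ (the kernel $(z-w)^{-2}$ integrates to zero over the disk by symmetry, e.g. via Green's formula as in \rf{eqGreen}, or since $\chi_\DDD=\overline\partial(\bar z\chi_\DDD)$ up to boundary terms and $\Beurling\overline\partial u=\partial u$), and then $\Beurling^m\chi_\DDD(z)=\Beurling^{m-1}(0)=0$ inside $\DDD$ because $\Beurling^{m-1}$ of a function vanishing on $\DDD$ still vanishes there — wait, that last step needs the annular-layer argument too, so really the heart of the matter is:

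\medskip
\noindent\emph{Key step.} If $g\in L^2(\C)$ is radial and supported in $\{|w|\ge 1\}$, then $\Beurling g(z)=0$ for every $z\in\DDD$. For such $g$ and $z$ with $|z|<1$, using the identity $\frac{1}{(z-w)^2}=\partial_z\frac{-1}{z-w}=\sum_{k\ge 1} k\,\bar{?}$ — more precisely, expand $\frac{1}{(z-w)^2}=\frac{1}{w^2}\frac{1}{(1-z/w)^2}=\frac{1}{w^2}\sum_{k\ge 0}(k+1)(z/w)^k$, valid since $|z|<1\le|w|$. Then
\[
\Beurling g(z)=\frac{-1}{\pi}\int_{|w|\ge 1} g(w)\sum_{k\ge 0}(k+1)\frac{z^k}{w^{k+2}}\,dm(w)
=\frac{-1}{\pi}\sum_{k\ge 0}(k+1)z^k\int_{|w|\ge 1}\frac{g(w)}{w^{k+2}}\,dm(w).
\]
Writing $w=\rho e^{i\theta}$ and using that $g(w)=g(\rho)$ depends only on $\rho$, each coefficient integral factors as $\big(\int_1^\infty g(\rho)\rho^{-(k+2)}\rho\,d\rho\big)\big(\int_0^{2\pi}e^{-i(k+2)\theta}\,d\theta\big)=0$ for every $k\ge 0$, since $k+2\ge 1$. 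Hence $\Beurling g\equiv 0$ on $\DDD$. (One must check the interchange of sum and integral; this is justified by dominated convergence on compact subsets $|z|\le r<1$, using $g\in L^2$ and $\sum(k+1)r^k<\infty$ together with $|w|^{-(k+2)}\le 1$.)

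\medskip
\noindent\emph{Conclusion by induction on $m$.} For $m=1$, decompose $f=\chi_\DDD+g$ as above; $\Beurling\chi_\DDD=0$ on $\DDD$ and $\Beurling g=0$ on $\DDD$ by the key step, so $\Beurling f=0$ on $\DDD$. For the inductive step, note $\Beurling f\in L^2(\C)$, and we have just shown it vanishes on $\DDD$; however $\Beurling f$ need not be radial, so the key step does not directly apply to $\Beurling^{m-1}(\Beurling f)$. To fix this, I would instead prove the sharper statement by induction: \emph{for every radial $f\in L^2$ with $f|_\DDD\equiv 1$, the function $\Beurling^m f$ is of the form $\bar z^{m}\cdot(\text{radial }L^2)$ plus something vanishing on $\DDD$} — more cleanly, observe that $\Beurling$ maps functions of the type $e^{ij\theta}a(\rho)$ to functions of the type $e^{i(j-2)\theta}\tilde a(\rho)$ (angular frequency drops by $2$), so $\Beurling^m$ of a radial ($j=0$) function is supported in angular frequency $-2m$. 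Expanding $\Beurling^m f(z)$ for $|z|<1$ against the layer $g$ using $\Beurling^m=\frac{(-1)^m m}{\pi}T^{(-m-1,m-1)}$ (Example \ref{exBeurlingIterate}), the kernel is $\frac{\bar{(z-w)}^{m-1}}{(z-w)^{m+1}}$; expanding both $(z-w)^{-m-1}$ and $\overline{(z-w)}^{m-1}$ in powers of $z/w,\bar z/\bar w$ produces terms $z^a\bar z^b w^{-(m+1)-a}\bar w^{(m-1)-b}$, and the $\theta$-integral $\int_0^{2\pi}\rho^{\text{stuff}}e^{i[-( m+1)-a+(m-1)-b]\theta}\,d\theta$ vanishes unless $-(m+1)-a+(m-1)-b=0$, i.e. $a+b=-2$, which is impossible for $a,b\ge 0$. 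So again every coefficient vanishes, giving $\Beurling^m f(z)=0$ for $z\in\DDD$; the $\chi_\DDD$ part is handled identically (or by the same expansion with $g$ replaced by $\chi_\DDD$, where $w$ ranges over $|w|<1$ — but then one expands in $w/z$ instead, and the angular integral $\int e^{i[\dots]\theta}d\theta$ again forces an impossible index relation).

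\medskip
\noindent\emph{Main obstacle.} The genuinely delicate point is the convergence/interchange of summation and integration in the kernel expansion, since $g$ is merely $L^2$ (not compactly supported), so the tail $|w|\to\infty$ must be controlled: here one splits $\{|w|\ge 1\}=\{1\le|w|\le 2|z|^{-1}\}\cup\{|w|>2|z|^{-1}\}$ — but that is awkward; cleaner is to fix $|z|\le r<1$, note the series $\sum_k (k+1)(|z|/|w|)^k\le\sum_k(k+1)r^k=(1-r)^{-2}$ converges uniformly in $w$, and dominate the integrand by $(1-r)^{-2}|g(w)||w|^{-2}\in L^1(\{|w|\ge 1\})$ (by Cauchy–Schwarz, since $|w|^{-2}\in L^2(\{|w|\ge1\})$). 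Then Fubini applies. The rest is the bookkeeping of angular frequencies, which is elementary once set up correctly.
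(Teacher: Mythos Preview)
Your angular--integration approach is different from the paper's and works cleanly for the outer layer $g=f-\chi_\DDD$, but the treatment of $\chi_\DDD$ has a real gap. When you write ``the $\chi_\DDD$ part is handled identically (or by the same expansion with $g$ replaced by $\chi_\DDD$, where $w$ ranges over $|w|<1$ --- but then one expands in $w/z$ instead\dots)'', neither option actually goes through: for $z\in\DDD$ and $w\in\DDD$ you have no control on $|z/w|$ or $|w/z|$, so no geometric series converges on the whole region, and more fundamentally the integral is a genuine principal value with a singularity at $w=z$ that the series expansion cannot see. (There is also a harmless sign slip in your angular exponent for the $g$-part: the power of $e^{i\theta}$ coming from $w^{-(m+1)-a}\bar w^{(m-1)-b}$ is $-(m+1)-a-\big((m-1)-b\big)=-2m-a+b$, not $-2-a-b$; since $0\le b\le m-1$ this is still never zero, so your conclusion survives.)

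The cleanest repair, staying within your framework, is to first use the explicit formula $\Beurling\chi_\DDD=-z^{-2}\chi_{\DDD^c}$ (which you already cite as classical) and then run your angular argument inductively on functions of the form $e^{ij\theta}a(\rho)$ supported in $\{\rho\ge 1\}$: the same expansion gives angular exponent $j-2-a$ for a single $\Beurling$, which is nonzero for all $a\ge 0$ as long as $j\le 1$, and each application of $\Beurling$ lowers $j$ by $2$ while preserving the support condition. This is exactly the ``angular frequency drops by $2$'' observation you mention but do not exploit. The paper takes a more algebraic route: it shows that the linear span $\Phi=\bigoplus_{j+i\le -2,\,i\ge 0}\langle z^j\bar z^i\chi_{\DDD^c}\rangle$ is stable under $\Beurling$ (via the identity $\Beurling(\bar\partial F)=\partial F$ applied to an explicit antiderivative) and that $\Beurling\chi_\DDD\in\Phi$, then passes to general radial $f$ by approximation with characteristic functions of disks. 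Both arguments are short; yours is more computational and avoids the approximation step at the cost of justifying the sum--integral interchange, while the paper's is more structural.
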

\begin{proof}
Consider first the characteristic function $\chi_{\DDD}$. Then, 
$$\Beurling \chi_{\DDD} (z) =  - z^{-2} \chi_{\DDD^c},$$
(see \cite[(4.25)]{AstalaIwaniecMartin}).
That is, $\Beurling \chi_{\DDD}$ vanishes in $\DDD$ and coincides with the holomorphic function $z^{-2}$ in $\DDD^c$. 

Next, consider a function defined as $\varphi(z) = z^{j} \bar{z}^{i} \chi_{\DDD^c}(z)$, 
with $j+i\leq -2$ and $i\geq 0$. Let $F(z)=\frac1{i+1}\left(z^{j} \bar{z}^{i+1} - z^{j-i-1} \right) \chi_{\DDD^c}(z)$. It is clear that $F\in W^{1,p}(\C)$ for $p>2$. As a consequence $\Beurling\varphi=\Beurling(\bar\partial F)=\partial F$, that is,
$$\Beurling \varphi (z) = \left(\frac{j}{i+1} z^{j-1}\bar{z}^{i+1} - \frac{j-i-1}{i+1} z^{j-i-2}\right) \chi_{\DDD^c}(z).$$
Thus, the direct sum (understood as a vector space of linear combinations with finitely many non-null coefficients) of the spans of monomials like the ones introduced above 
$$\Phi :=  \bigoplus_{j+i \leq -2: \,i\geq 0}  \langle z^{j} \bar{z}^{i}  \chi_{\DDD^c}(z)\rangle$$
is stable under $\Beurling$, and clearly $\Beurling \chi_\DDD \in \Phi$. The lemma is proven for $f=\chi_\DDD$.

In general, every radial function $f\in L^2(\C)$ such that $f|_\DDD\equiv 1$ can be approximated by finite linear combinations of characteristic functions of concentric circles. The lemma follows combining those facts.\end{proof}

\subsection{Compactness of $\mathcal{R}_m$}\label{secCompactnessDeath}

\begin{proof}[Proof of Lemma \ref{lemCompactnessDeath}]
Recall that we want to prove that $\mathcal{R}_m: f \mapsto \chi_\Omega \Beurling\left(\chi_{\Omega^c} \Beurling^{m-1}(\chi_\Omega f)\right)$ is a compact operator in $W^{n,p}(\Omega)$. 

Since $\mathcal{R}_m f$ is holomorphic in $\Omega$, it is enough to see that $\mathcal{T}_m:=\partial^n \mathcal{R}_m:W^{n,p}(\Omega)\to L^p(\Omega)$ is a compact operator.

Indeed, we have that $\mathcal{R}_m$ is bounded in $W^{n,p}(\Omega)$ by \rf{eqAmIsBounded} and, thus, since the inclusion $W^{n,p}(\Omega)\hookrightarrow W^{n-1,p}(\Omega)$ is compact for any extension domain (see \cite[4.3.2/Remark 1]{TriebelTheory}), we have that $\mathcal{R}_m:W^{n,p}(\Omega)\to W^{n-1,p}(\Omega)$ is compact. That is, given a bounded sequence $\{f_j\}_{j}\subset W^{n,p}(\Omega)$, there exists a subsequence $\{f_{j_k}\}_k$ and a function $g\in W^{n-1,p}(\Omega)$ such that $\mathcal{R}_m f_{j_k}\to g$ in $W^{n-1,p}(\Omega)$. If $\mathcal{T}_m:W^{n,p}(\Omega)\to L^p(\Omega)$ was a compact operator, then there would be a subsubsequence $\{f_{j_{k_i}}\}_i$ and a function $g_n$  such that  $\mathcal{T}_m f_{j_{k_i}}\to g_n$ in $L^p(\Omega)$. It is immediate to see that $g_n$ is the weak derivative $\partial ^n g$ in $\Omega$. Therefore, if $\mathcal{T}_m$ is compact then $\mathcal{R}_m$ is compact as well.

We will prove that  $\mathcal{T}_m$ is compact. Let $f\in W^{n,p}(\Omega)$. Consider a partition of the unity $\{\psi_Q\}_{Q\in\mathcal{W}}$ such that $\supp \, \psi_Q\subset \frac{11}{10}Q$ and $|\nabla^{j}\psi_Q | \lesssim \ell(Q)^{-j}$ for every Whitney cube $Q$ and every $0\leq j\leq n$. 

For every $i\in\N$, let $\Omega_i:=\bigcup_{Q:\ell(Q)>2^{-i}} \supp(\psi_Q)$. We can define a finite partition of the unity $\{\psi_Q^i\}_{Q\in\mathcal{W}}$ such that 
\begin{itemize}
\item If $\ell(Q)>2^{-i}$ then $\psi_Q^i=\psi_Q$.
\item If $\ell(Q)=2^{-i}$ then $\supp \, \psi_Q^i \subset \mathbf{Sh}(Q)$ (see Definition \ref{defShadow}), $|\nabla^{j}\psi_Q^i | \lesssim \ell(Q)^{-j}$ and $\supp\, \psi_Q^i\cap \Omega_i\subset 2Q$.
\item If $\ell(Q)<2^{-i}$ then $\psi_Q^i \equiv 0$. 
\end{itemize}
Indeed, to obtain $\psi_Q^i$ for cubes $Q$ with $\ell(Q)=2^{-i}$, choosing a convenient $\rho>1$ one can consider a partition associated to $\{U_Q\}_{\ell(Q)=2^{-i}}$ for the collection of open sets 
$$U_Q=2Q \cup \{x: \dist(x,Q)<\rho Q,\mbox{ with either } x\in \Omega^c \mbox{ or } x\in S\in\mathcal{W} \mbox{ with } \ell(S)<2^{-i} \} \subset \Sh(Q)$$
 and then multiply each of these  functions times $\left(\chi_\Omega-\sum_{\ell(Q)>2^{-i}}\psi_Q\right)$.

Then, writing $f_Q=\fint_Q f \, dm$ for the mean of $f$ in  $Q$ and $\left(\mathcal{T}_m (f-f_Q) \right)_{Q}=\fint_{Q} \mathcal{T}_m (f-f_Q)\, dm$, we can define 
\begin{align*}
\mathcal{T}_m^i f (z)
	& =\sum_{Q\in \mathcal{W}: \ell(Q)>2^{-i}} \mathcal{T}_m (f) (z) \psi_Q(z)  +\sum_{Q\in \mathcal{W}: \ell(Q)= 2^{-i}} \left(\mathcal{T}_m (f-f_Q) \right)_{Q} \psi_Q^i (z).
\end{align*}

We will prove the following two claims.
\begin{claim}\label{claimCompactApproximation}
For every $i\in\N$, the operator $\mathcal{T}_m^i: W^{n,p}(\Omega) \to L^p(\Omega)$ is compact.
\end{claim}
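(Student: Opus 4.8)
\textbf{Plan of proof of Claim \ref{claimCompactApproximation}.}
The idea is that for a \emph{fixed} $i$ there are only finitely many Whitney cubes of side-length $\geq 2^{-i}$, all of them lying at a definite distance from $\partial\Omega$, so $\mathcal{T}_m^i$ decomposes into a piece that gains a derivative inside $\Omega$ and a finite-rank piece. First I would record that $\mathcal{T}_m=\partial^n\mathcal{R}_m$ is bounded from $W^{n,p}(\Omega)$ to $L^p(\Omega)$: indeed $\mathcal{R}_m$ is bounded in $W^{n,p}(\Omega)$ by \rf{eqAmIsBounded}, and since $\mathcal{R}_m f$ is holomorphic in $\Omega$ one has $\|\mathcal{T}_m f\|_{L^p(\Omega)}\lesssim\|\mathcal{R}_m f\|_{W^{n,p}(\Omega)}\lesssim\|f\|_{W^{n,p}(\Omega)}$. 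Writing $\eta_i:=\sum_{Q:\ell(Q)>2^{-i}}\psi_Q$, its support is contained in $\Omega_i$, it satisfies $\|\nabla^j\eta_i\|_{L^\infty}\lesssim_i1$, and $\dist(\supp\eta_i,\partial\Omega)\geq\delta_i$ for some $\delta_i>0$ depending on $i$ and the Whitney character. Then I would split
$$\mathcal{T}_m^i=\mathcal{T}_m^{i,1}+\mathcal{T}_m^{i,2},\qquad \mathcal{T}_m^{i,1}f:=\eta_i\cdot\mathcal{T}_m f,\qquad \mathcal{T}_m^{i,2}f:=\sum_{Q:\ell(Q)=2^{-i}}\bigl(\mathcal{T}_m(f-f_Q)\bigr)_Q\,\psi_Q^i .$$

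For $\mathcal{T}_m^{i,1}$ I would argue that it actually maps $W^{n,p}(\Omega)$ boundedly into $W^{1,p}(\Omega)$, after which compactness follows from the Rellich compactness of $W^{1,p}(\Omega)\hookrightarrow L^p(\Omega)$ for bounded extension domains. Since $\mathcal{R}_m f$ is holomorphic in $\Omega$ and $\|\mathcal{R}_m f\|_{L^p(\Omega)}\lesssim\|f\|_{W^{n,p}(\Omega)}$, the interior Cauchy estimate applied on balls $B(z,\delta_i/2)\subset\Omega$ with $z\in\supp\eta_i$ gives $\|\partial^n\mathcal{R}_m f\|_{L^\infty(\supp\eta_i)}+\|\partial^{n+1}\mathcal{R}_m f\|_{L^\infty(\supp\eta_i)}\lesssim_i\|\mathcal{R}_m f\|_{L^p(\Omega)}\lesssim_i\|f\|_{W^{n,p}(\Omega)}$; combining this with the Leibniz rule and $\|\nabla\eta_i\|_{L^\infty}\lesssim_i1$ bounds $\|\eta_i\,\mathcal{T}_m f\|_{W^{1,p}(\Omega)}$ by $C_i\|f\|_{W^{n,p}(\Omega)}$. (Alternatively one can write the explicit kernel $\eta_i(z)\,\chi_{\Omega^c}(w)\,(z-w)^{-n-2}$ of $\mathcal{T}_m^{i,1}$, note it is bounded and continuous in $z$ because $|z-w|\geq\delta_i$ on its support, and obtain compactness from $L^p(\mathbb C)$ into $C(\overline{\Omega_i})$ via Arzel\`a--Ascoli, precomposed with the bounded map $f\mapsto\Beurling^{m-1}(\chi_\Omega f)$.)

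For $\mathcal{T}_m^{i,2}$ I would use that there are only finitely many Whitney cubes $Q$ with $\ell(Q)=2^{-i}$, so its range lies in the finite-dimensional span of $\{\psi_Q^i:\ell(Q)=2^{-i}\}$; hence it suffices to check it is bounded, i.e. that each coefficient functional $f\mapsto\bigl(\mathcal{T}_m(f-f_Q)\bigr)_Q=\fint_Q\mathcal{T}_m f\,dm-f_Q\fint_Q\mathcal{T}_m(1)\,dm$ is bounded on $W^{n,p}(\Omega)$. This is immediate from $\bigl|\fint_Q\mathcal{T}_m f\,dm\bigr|\leq|Q|^{-1/p}\|\mathcal{T}_m f\|_{L^p(\Omega)}\lesssim_i\|f\|_{W^{n,p}(\Omega)}$, the analogous bound $|f_Q|\lesssim_i\|f\|_{W^{n,p}(\Omega)}$, and the fact that $\fint_Q\mathcal{T}_m(1)\,dm$ is a fixed finite constant (here $1\in W^{n,p}(\Omega)$ since $\Omega$ is bounded). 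Thus $\mathcal{T}_m^{i,2}$ is a bounded finite-rank, hence compact, operator, and $\mathcal{T}_m^i=\mathcal{T}_m^{i,1}+\mathcal{T}_m^{i,2}$ is compact. I do not expect a serious obstacle: the only points needing care are the holomorphy of $\mathcal{R}_m f$ in $\Omega$ (already noted in the excerpt) and the interior estimate for $\mathcal{T}_m^{i,1}$, both routine once one observes that $\supp\eta_i$ is a compact subset of $\Omega$.
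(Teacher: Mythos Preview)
Your proof is correct and reaches the same conclusion as the paper, but via a genuinely different decomposition. The paper does not split $\mathcal{T}_m^i$ into two pieces; instead it shows directly that the whole operator $\mathcal{T}_m^i$ maps $W^{n,p}(\Omega)$ boundedly into $W^{1,p}(\Omega)$ and then invokes the compact Rellich embedding. Concretely, it applies the Leibniz rule to $\nabla\mathcal{T}_m^i f$ and reduces matters to the estimate $\|\nabla\mathcal{T}_m f\|_{L^p(\Omega_i)}\lesssim_i\|f\|_{L^p(\Omega)}$, which it obtains by writing the explicit kernel $|z-w|^{-(n+3)}$ of $\nabla\partial^n\Beurling$ acting on $g=\chi_{\Omega^c}\Beurling^{m-1}(\chi_\Omega f)$ and observing that on $\Omega_i$ this is convolution with an $L^1$ kernel (Young's inequality), since $\dist(\Omega_i,\Omega^c)\gtrsim 2^{-i}$.

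Your route replaces this kernel computation by two cleaner observations: for $\mathcal{T}_m^{i,1}$ you exploit the holomorphy of $\mathcal{R}_m f$ in $\Omega$ and interior Cauchy estimates on the compact set $\supp\eta_i$, and for $\mathcal{T}_m^{i,2}$ you note that it has finite rank because there are only finitely many Whitney cubes of side-length $2^{-i}$ in a bounded domain. Both arguments are shorter than the paper's and avoid the explicit kernel estimate; the finite-rank observation in particular sidesteps the Jensen-type bound the paper uses for the averaged terms. Conversely, the paper's argument is more uniform (one estimate handles everything) and does not rely on the holomorphy of $\mathcal{R}_m f$ beyond what is already stated. Either approach is entirely adequate here since only compactness, not a quantitative bound, is needed.
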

\begin{claim}\label{claimSmallDifference}
The norm of the error operator $\mathcal{E}^i:=\mathcal{T}_m-\mathcal{T}_m^i: W^{n,p}(\Omega) \to L^p(\Omega)$ tends to zero as $i$ tends to  infinity.
\end{claim}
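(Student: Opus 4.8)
The plan is to bound $\norm{\mathcal E^i f}_{L^p(\Omega)}$ by $\varepsilon_i\norm{f}_{W^{n,p}(\Omega)}$ with $\varepsilon_i\to 0$. First, since $\Beurling^0=\mathrm{Id}$ forces $\chi_{\Omega^c}(\chi_\Omega f)\equiv 0$, we have $\mathcal R_1=0$ and may assume $m\geq 2$. Because $\sum_Q\psi_Q^i\equiv\chi_\Omega$, $\psi_Q^i=\psi_Q$ for $\ell(Q)>2^{-i}$ and $\psi_Q^i\equiv 0$ for $\ell(Q)<2^{-i}$, only the bottom cubes $\mathcal W_i:=\{Q\in\mathcal W:\ell(Q)=2^{-i}\}$ contribute, and using $(\mathcal T_m(f-f_Q))_Q=(\mathcal T_m f)_Q-f_Q(\mathcal T_m\chi_\Omega)_Q$ one splits
\begin{equation*}
\mathcal E^i f=\underbrace{\sum_{Q\in\mathcal W_i}\bigl(\mathcal T_m f-(\mathcal T_m f)_Q\bigr)\psi_Q^i}_{\mathcal A^i f}+\underbrace{\sum_{Q\in\mathcal W_i}f_Q\,(\mathcal T_m\chi_\Omega)_Q\,\psi_Q^i}_{\mathcal C^i f}.
\end{equation*}
Since $\supp\psi_Q^i\subset\Sh(Q)$ with $\diam\Sh(Q)\lesssim\ell(Q)=2^{-i}$, a Whitney cube lies in at most boundedly many shadows of a fixed generation, so $\{\supp\psi_Q^i\}_{Q\in\mathcal W_i}$ has bounded overlap and is contained in the collar $\Omega^*_i:=\{z\in\Omega:\dist(z,\partial\Omega)\lesssim 2^{-i}\}$, whose measure tends to $0$. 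Also $\mathcal T_m\chi_\Omega=\partial^n\mathcal R_m\chi_\Omega\in L^p(\Omega)$ by \rf{eqAmIsBounded}, so $\norm{\mathcal T_m\chi_\Omega}_{L^p(\Omega^*_i)}\to 0$ by absolute continuity of the integral. The mean part is then immediate: $np>2$ gives $W^{n,p}(\Omega)\hookrightarrow L^\infty(\Omega)$, so $|f_Q|\lesssim\norm{f}_{W^{n,p}(\Omega)}$, and with $|\supp\psi_Q^i|\lesssim|\Sh(Q)|\approx|Q|$ and $|(\mathcal T_m\chi_\Omega)_Q|^p\leq\fint_Q|\mathcal T_m\chi_\Omega|^p$ we get $\norm{\mathcal C^i f}_{L^p(\Omega)}^p\lesssim\norm{f}_{W^{n,p}(\Omega)}^p\sum_{Q\in\mathcal W_i}\int_Q|\mathcal T_m\chi_\Omega|^p\lesssim\norm{f}_{W^{n,p}(\Omega)}^p\norm{\mathcal T_m\chi_\Omega}_{L^p(\Omega^*_i)}^p\to 0$.

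The real work is $\mathcal A^i$, and here I would use the kernel identity for $\mathcal T_m=\partial^n\mathcal R_m$: unfolding $\mathcal R_m f(z)=\Beurling\bigl(\chi_{\Omega^c}\Beurling^{m-1}(\chi_\Omega f)\bigr)(z)$, converting the $\Omega^c$-area integral to a contour integral on $\partial\Omega$ via a $\overline w$-antiderivative and Green's formula \rf{eqGreen}, and differentiating $n$ times in $z$, one obtains $\mathcal T_m f(z)=c\int_\Omega K_{(n+2,m,m-1)}(z,\xi)f(\xi)\,dm(\xi)$. The triple $(n+2,m,m-1)$ meets the hypotheses of Proposition \ref{propoKernelExpression} ($m_1=n+2\geq 3$, $m_3=m-1\geq1$, $m_2=m\leq m_1+m_3-2=n+m-1$), so by \rf{eqExpressionTaylorAndMore} we may write $\mathcal T_m=\partial^n\Beurling\chi_\Omega\cdot\mathcal B_m+\mathcal S$, where $\mathcal B_m$ is a constant multiple of $(\Beurling^{m-1})_\Omega$ — bounded on $W^{n,p}(\Omega)$ by Theorem \ref{theoGeometricPGtr2} — and $\mathcal S$ carries the Taylor-error kernel $\sum_{j\leq m-1}c_j\,R^{m-1}_{n+m-2,j}(z,\xi)(\xi-z)^{-(n+m+1-j)}$, interpreted through $\mathcal S=\mathcal T_m-\partial^n\Beurling\chi_\Omega\cdot\mathcal B_m$. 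The contribution of the first summand to $\mathcal A^i$ is controlled exactly like $\mathcal C^i$: $\mathcal B_m f\in W^{n,p}(\Omega)\hookrightarrow L^\infty(\Omega)$ with $\norm{\mathcal B_m f}_{L^\infty(\Omega)}\lesssim\norm{f}_{W^{n,p}(\Omega)}$, so both $\partial^n\Beurling\chi_\Omega(z)\,\mathcal B_m f(z)$ and its $Q$-average are dominated by $C\norm{f}_{W^{n,p}(\Omega)}\,|\partial^n\Beurling\chi_\Omega(z)|$, and summing with bounded overlap leaves $\lesssim\norm{f}_{W^{n,p}(\Omega)}^p\norm{\partial^n\Beurling\chi_\Omega}_{L^p(\Omega^*_i)}^p\to 0$ (using $\Beurling\chi_\Omega\in W^{n,p}(\Omega)$ by Theorem \ref{theoGeometricPGtr2}). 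What remains is to show that $\mathcal S f$ has a modulus of continuity $\omega(2^{-i})\norm{f}_{W^{n,p}(\Omega)}$ on balls of radius $\approx2^{-i}$ with $\omega(t)\to 0$ (morally $\omega(t)=t^{\sigma_p}$, $\sigma_p:=1-2/p$), for then $\norm{\mathcal S f(\cdot)-(\mathcal S f)_Q}_{L^p(\supp\psi_Q^i)}\lesssim\omega(2^{-i})\norm{f}_{W^{n,p}(\Omega)}|Q|^{1/p}$, and summing over $Q\in\mathcal W_i$ with bounded overlap gives $\norm{\mathcal A^i f}_{L^p(\Omega)}\lesssim\omega(2^{-i})\norm{f}_{W^{n,p}(\Omega)}\,|\Omega|^{1/p}\to 0$, which closes the proof of Claim \ref{claimSmallDifference}.

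Establishing this smoothness of $\mathcal S$ is the main obstacle, and the reason for the technical Section \ref{secTechnical}. The naive kernel of $\mathcal S$ has size $\approx|z-\xi|^{-(3-\sigma_p)}$, which is non-integrable in $\R^2$, so it only makes sense after its singular part is cancelled against $\partial^n\Beurling\chi_\Omega$ — the bookkeeping behind that cancellation being precisely the combinatorial identity of Claim \ref{claimCombinatorial} and the manipulations preceding it. Concretely I would follow the three-term scheme of the proof of Lemma \ref{lemCompactness}: around each $Q$ split $f=\mathbf P^{n-1}_{3Q}f+(f-\mathbf P^{n-1}_{3Q}f)\varphi_Q+(f-\mathbf P^{n-1}_{3Q}f)(\chi_\Omega-\varphi_Q)$. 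For the last piece the cutoff vanishes near $z$, so the $\mathcal S$-kernel is evaluated away from its singularity and the $z$-smoothness estimate \rf{eqBoundRmnjm} of Lemma \ref{lemKernelBounds}, together with the chain bound \rf{eqChain} and Lemma \ref{lemTwoFoldedFunctionGuay}, produces the gain $(2^{-i})^{\sigma_p}$; the compactly supported piece is handled by the Cauchy-transform trick as in \rf{eqCommutesForSmooth}; the polynomial $\mathbf P^{n-1}_{3Q}f$ is expanded binomially, its positive-degree monomials reducing to operators $T^\gamma_\Omega$ of homogeneity $>-2$ that are bounded on $W^{n,p}(\Omega)$ by Theorem \ref{theoGeometricPGtr2}, while its constant term is treated with Lemma \ref{lemBeurlingBump} — since $\mathcal R_m$ reflects across $\partial\Omega$ and Beurling iterates of a radial bump equal to $1$ near the reflection region vanish there, the constant contributes nothing harmful. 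The hardest case is when $z\in\supp\psi_Q^i$ lies deep in the collar below $Q$, in some $S\leq Q$ with $\ell(S)\ll\ell(Q)$: there one runs the chain argument from $S$ rather than inside $3Q$, and must track carefully the interplay between the position of $z$, the approximating polynomial of $f$, and the kernel singularity, using Lemma \ref{lemTaylor} and both estimates of Lemma \ref{lemKernelBounds}; once this is done the sum over $Q\in\mathcal W_i$ behaves as above.
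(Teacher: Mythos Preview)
Your opening reduction is correct and matches the paper: the decomposition $\mathcal E^i f=\mathcal A^i f+\mathcal C^i f$ is exactly the paper's splitting into $\mathcal E^i_1$ and $\mathcal E^i_2$ (up to reorganization), and your treatment of $\mathcal C^i$ via $\mathcal T_m\chi_\Omega\in L^p$ small on the collar is the same as the paper's \rf{eqBoundE0}. The kernel identity you derive, with indices $(n+2,m,m-1)$, also satisfies the hypotheses of Proposition~\ref{propoKernelExpression} and gives the same structural decomposition of $K_{\vec m}$ that the paper exploits.

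The gap is in your handling of $\mathcal A^i$ via the operator-level split $\mathcal T_m=\partial^n\Beurling\chi_\Omega\cdot\mathcal B_m+\mathcal S$. You want a uniform oscillation bound $|\mathcal S f(z_1)-\mathcal S f(z_2)|\lesssim |z_1-z_2|^{\sigma_p}\norm{f}_{W^{n,p}}$ for $z_1,z_2$ in $\Sh(Q)$, but this is not attainable with $f$ itself. The far-field kernel difference is $\lesssim |z_1-z_2|^{\sigma_p}|z_1-\xi|^{-3}$, and integrating this against $|f(\xi)|$ over $\{|\xi-z_1|>\ell(S)\}$ (where $S\ni z_1$ is the small Whitney cube, possibly with $\ell(S)\ll\ell(Q)=2^{-i}$) produces a factor $\ell(S)^{-1}\norm{f}_{L^\infty}$; after taking $L^p$ over $2S$ and summing over $S\in\SH(Q)$ you pick up $\sum_S\ell(S)^{2-p}$, which diverges for $p>2$. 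Subtracting $\mathbf P^{n-1}_{3Q}f$ does not cure this, because the polynomial is centered at the \emph{parent} scale $\ell(Q)$, not at the local scale $\ell(S)$: for $\xi$ at distance $\approx\ell(S)$ from $z$ you get no smallness from $f-\mathbf P^{n-1}_{3Q}f$. Likewise, the Cauchy-transform identity \rf{eqCommutesForSmooth} is specific to commutators $[\varphi,\Beurling]$ and does not transfer to the operator $\mathcal S$; it cannot treat the compactly supported piece as you suggest.

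What the paper does differently, and what you are missing, is the subtraction of the \emph{local} mean $f_S$ (with $S$ the small Whitney cube containing $z$) \emph{before} invoking the kernel decomposition. The paper first reduces $\mathcal A^i$ to the modified error $\mathcal E^i_0$, which involves $\mathcal T_m(f-f_S)(z)-(\mathcal T_m(f-f_Q))_Q$ for $z\in 2S$ and $S\in\SH(Q)$, and then uses admissible chains to telescope the $Q$-average down to neighbors of $S$ (estimates \rf{eqBreakEi0fAlmost}--\rf{eqSumQSP}). Only after this reduction does the local/non-local split take place, and crucially the bump $\varphi_S$ is centered at the \emph{small} cube $S$, not at $Q$. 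For the local piece one integrates by parts in $\xi$ to shift to the kernel $K_{\vec m_0}$ acting on $\overline\partial[(f-f_L)\varphi_S]$; for the non-local piece the far-field estimate \rf{eqBoundNonlocalRmnjm} is integrated against $|f(\xi)-f_L|$, and the chain bound \rf{eqChain} together with Lemma~\ref{lemTwoFoldedFunctionGuay} closes the sum (term $\squared{8.2.3}$). The constant-term contribution is handled by Lemma~\ref{lemBeurlingBump} as you anticipated (term $\squared{8.1.1}$). In short: your strategy needs the local subtraction $f-f_S$ and the chain telescoping to be inserted \emph{before} you appeal to the kernel decomposition of Proposition~\ref{propoKernelExpression}, and the localization must happen at the scale of $S$, not of $Q$.
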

Then the compactness of $\mathcal{T}_m$ is a well-known consequence of the previous two claims (see \cite[Theorem 4.11]{Schechter}). Putting all together, this proves Lemma \ref{lemCompactnessDeath}.
\end{proof}

\begin{proof}[Proof of Claim \ref{claimCompactApproximation}]
We will prove that the operator $\mathcal{T}_m^i:W^{n,p}(\Omega)\to W^{1,p}(\Omega)$ is bounded. As before, since $\Omega$ is an extension domain, the embedding $W^{1,p}(\Omega)\hookrightarrow L^p(\Omega)$ is compact. Therefore we will deduce the compactness of $\mathcal{T}_m^i:W^{n,p}(\Omega)\to L^p(\Omega)$. Note that the specific value of the operator norm $\norm{\mathcal{T}_m^i}_{W^{n,p}(\Omega)\to W^{1,p}(\Omega)}$ is not important for our argument, since we only care about compactness.

Consider a fixed $i\in\N$ and $f\in W^{n,p}(\Omega)$. For every $z\in \Omega$ and every first order derivative $D$, since $\mathcal{T}_m f$ is analytic on $\Omega$, we can use the Leibniz rule \rf{eqLeibniz} to get
\begin{align*}
D \mathcal{T}_m^i f= \sum_{Q: \ell(Q)>2^{-i}} D\mathcal{T}_m (f) \psi_Q +\sum_{Q:\ell(Q)>2^{-i}} \mathcal{T}_m (f) D \psi_Q +\sum_{ Q:\ell(Q)= 2^{-i}} \left(\mathcal{T}_m (f-f_Q) \right)_{Q} D\psi_Q^i .
\end{align*}
By Jensen's inequality $\left|\mathcal{T}_m (f-f_Q) \right|_{Q}\leq \norm{\mathcal{T}_m (f-f_Q) }_{L^p(Q)}\ell(Q)^{-2/p}$, so 
\begin{align}\label{eqJensenInOperatorCompact}
|\nabla \mathcal{T}_m^i f(z)|
\nonumber	& \leq \sum_{Q:\ell(Q)>2^{-i}}\chi_{\frac{11}{10}Q}(z) |\nabla \mathcal{T}_m f (z)|+ \sum_{Q:\ell(Q)>2^{-i}} |\nabla \psi_Q(z)||\mathcal{T}_m f (z)|\\
	& \quad +\sum_{Q:\ell(Q)=2^{-i}} |\nabla \psi_Q^i(z)| \norm{\mathcal{T}_m (f-f_Q) }_{L^p(Q)}(2^{-i})^{-2/p}.
\end{align}
Using the finite overlapping of the double Whitney cubes and the fact that  $|\nabla \psi_Q^i(z)| \lesssim 2^{i}$ for every Whitney cube $Q$,  we can conclude that 
$$\norm{\nabla \mathcal{T}_m^i  f}_{L^p(\Omega)}^p \lesssim_{i,p}  \norm{\nabla \mathcal{T}_m f }_{L^p(\Omega_i)}^p+\norm{ \mathcal{T}_m f }_{L^{p}(\Omega_i)}^p + \sum_{Q:\ell(Q)=2^{-i}}\left(\norm{\mathcal{T}_m f }_{L^p(Q)}^p + |f_Q|^p\norm{\mathcal{T}_m 1}_{L^p(Q)}^p \right).$$
By the Sobolev Embedding Theorem 
\begin{align}\label{eqSobolevfQ}
|f_Q|\leq \norm{f}_{L^\infty(\Omega)}\lesssim_{\Omega,p} \norm{f}_{W^{1,p}(\Omega)}.
\end{align}
Thus, since $\mathcal{T}_m:W^{n,p}(\Omega)\to L^p(\Omega)$ is bounded, we have that
\begin{equation}\label{eqBoundTBreak}
\norm{\nabla \mathcal{T}_m^i f}_{L^p(\Omega)} \lesssim_{p,i,\Omega} \norm{\nabla \mathcal{T}_m f }_{L^p(\Omega_i)}+ \norm{f}_{W^{n,p}(\Omega)}.
\end{equation}

To see that $\norm{\nabla \mathcal{T}_m f }_{L^p(\Omega_i)}\lesssim_i\norm{f}_{W^{n,p}(\Omega)}$, note that $\nabla\mathcal{T}_mf=\nabla \partial^{n} \Beurling\left(\chi_{\Omega^c} \Beurling^{m-1}(\chi_\Omega f)\right)$. We have that $\Beurling^{m-1} :L^p(\Omega)\to L^p(\Omega^c)$ is bounded trivially, and for $z\in \Omega_i$ and $g\in L^p$ supported in $\Omega^c$ we have that
$$\left|\nabla\partial^{n} \Beurling g(z)\right|\lesssim \int_{|z-w|>2^{-i}} \frac{1}{|z-w|^{n+3}} g(w)\, dm(w).$$
This is the convolution of $g$ with an $L^1$ kernel, so Young's inequality \rf{eqYoung} tells us that
$$\norm{\nabla \partial^{n} \Beurling g}_{L^p(\Omega_i)}\leq C_i \norm{g}_{L^p},$$
proving that
\begin{equation}\label{eqBoundTPartBad}
\norm{\nabla \mathcal{T}_mf}_{L^p(\Omega_i)}\lesssim_i \norm{\Beurling^{m-1}(\chi_\Omega f)}_{L^p(\Omega^c)}\lesssim \norm{f}_{L^{p}(\Omega)}\lesssim \norm{f}_{W^{n,p}(\Omega)}.
\end{equation}

 Combining \rf{eqBoundTBreak} and \rf{eqBoundTPartBad}, we have seen that $\norm{\nabla\mathcal{T}_m^i f}_{L^p(\Omega)}\lesssim \norm{f}_{W^{n,p}(\Omega)}$. The reader can use Jensen's inequality as in \rf{eqJensenInOperatorCompact} to check that $\norm{\mathcal{T}_m^i f}_{L^p(\Omega)}\lesssim \norm{f}_{W^{n,p}(\Omega)}$ as well. This, proves that the operator $\mathcal{T}_m^i:W^{n,p}(\Omega)\to W^{1,p}(\Omega)$ is bounded and, therefore, composing with the compact inclusion, the operator $\mathcal{T}_m^i:W^{n,p}(\Omega)\to L^p(\Omega)$ is compact.
\end{proof}

\begin{proof}[Proof of Claim \ref{claimSmallDifference}]
We want to see that the error operator 
$$\mathcal{E}^i  =\mathcal{T}_m-\mathcal{T}_m^i $$
satisfies that $\norm{\mathcal{E}^i}_{W^{n,p}(\Omega) \to L^p(\Omega)}$ tends to zero as $i$ tends to  infinity.

 Recall that $\Omega_i=\bigcup_{Q:\ell(Q)>2^{-i}} \supp(\psi_Q)$. We define the modified error operator $\mathcal{E}^i_0$ acting in $f\in W^{n,p}(\Omega)$ as
$$\mathcal{E}^i_0 f (z):= \chi_{\Omega\setminus\Omega_{i-1}}(z) \sum_{Q: \ell(Q)= 2^{-i}} \sum_{\substack{S: \ell(S)\leq 2^{-i}\\S\subset \mathbf{Sh}(Q)}} \left|\mathcal{T}_m(f-f_S)(z) - \left(\mathcal{T}_m (f-f_Q) \right)_{Q} \right|  \chi_{2S}(z)$$
for every $z\in\Omega$.
The first step will be proving that
\begin{equation}\label{eqObjectiveError}
\norm{\mathcal{E}^i f}_{L^p(\Omega)} \lesssim \norm{\mathcal{E}^i_0 f}_{L^p(\Omega)} + C_i \norm{f}_{W^{1,p}(\Omega)},
\end{equation}
with $C_i\xrightarrow{i\to\infty} 0$.

Note that $\mathcal{T}_m 1=\mathcal{T}_m \chi_\Omega$ because $\mathcal{T}_m f=\partial^n\chi_\Omega \Beurling\left(\chi_{\Omega^c} \Beurling^{m-1}(\chi_\Omega f)\right)$. Let us write 
\begin{align*}
\mathcal{T}_m f (z)
	& = \sum_{S\in\mathcal{W}:\ell(S)> 2^{-i}}  \mathcal{T}_m (f) (z) \psi_S (z)+ \sum_{S\in\mathcal{W}:\ell(S)\leq 2^{-i}}\left( f_S \mathcal{T}_m (1) (z)  + \mathcal{T}_m (f-f_S) (z) \right) \psi_S (z)
\end{align*}
for $z\in\Omega$.  Recall that
\begin{align*}
\mathcal{T}_m^i f (z)
	& = \sum_{Q\in \mathcal{W}: \ell(Q)>2^{-i}} \mathcal{T}_m (f) (z) \psi_Q(z)  +\sum_{Q\in \mathcal{W}: \ell(Q)= 2^{-i}} \left(\mathcal{T}_m (f-f_Q) \right)_{Q} \psi_Q^i (z).
\end{align*}
Thus, for the error operator $\mathcal{E}^i$ we have the expression
\begin{align}\label{eqBreakError}
\mathcal{E}^i f (z)
\nonumber	& =\mathcal{T}_mf(z)-\mathcal{T}_m^if(z)= \sum_{S:\ell(S)\leq 2^{-i}} f_S \mathcal{T}_m (1) (z) \psi_S(z) \\
\nonumber	& \quad + \left(\sum_{S: \ell(S)\leq 2^{-i}} \mathcal{T}_m(f-f_S)(z) \psi_S(z) -\sum_{Q: \ell(Q)= 2^{-i}}  \left(\mathcal{T}_m (f-f_Q) \right)_{Q}  \psi_Q^i (z)\right)\\
	& = \mathcal{E}^i_1f(z)+\mathcal{E}^i_2f(z).
\end{align}

The first part is easy to bound using again \rf{eqSobolevfQ}. Indeed, we have that
\begin{align}\label{eqBoundE0}
\norm{\mathcal{E}^i_1f}_{L^p(\Omega)}^p 
 \lesssim_p \sum_{S:\ell(S)\leq 2^{-i}} |f_S|^p \norm{\mathcal{T}_m (1)}_{L^p(11/10S)}^p
 \lesssim_\Omega\norm{f}_{W^{1,p}(\Omega)}^p \norm{\mathcal{T}_m (1)}_{L^p(\Omega\setminus \Omega_{i-1})}^p,
\end{align}
where $\norm{\mathcal{T}_m (1)}_{L^p(\Omega\setminus \Omega_i)}^p \xrightarrow{i\to \infty} 0$.

To control $\mathcal{E}^i_2f$ in \rf{eqBreakError}, note that every $z \in \Omega$ satisfies
\begin{equation}\label{eqSumPsi}
\sum_{S:\ell(S)\leq 2^{-i}} \psi_S(z)=\sum_{Q:\ell(Q)=2^{-i}}\psi_Q^i(z)\leq 1,
\end{equation}
with equality when $z\notin \bigcup_{\ell(Q)>2^{-i}} \supp(\psi_Q)$, that is, when $z\in \Omega \setminus\Omega_i$. 
In this case
\begin{align}\label{eqEi1nonlocal}
\mathcal{E}^i_2 f(z) 
\nonumber	& = \sum_{S: \ell(S)\leq 2^{-i}} \mathcal{T}_m(f-f_S)(z) \psi_S(z)\sum_{Q: \ell(Q)= 2^{-i}} \psi_Q^i (z)\\
\nonumber	& \quad -\sum_{Q: \ell(Q)= 2^{-i}}  \left(\mathcal{T}_m (f-f_Q) \right)_{Q}  \psi_Q^i (z) \sum_{S: \ell(S)\leq 2^{-i}} \psi_S(z) \\
	& = \sum_{Q: \ell(Q)= 2^{-i}} \sum_{S: \ell(S)\leq 2^{-i}} \left(\mathcal{T}_m(f-f_S)(z) - \left(\mathcal{T}_m (f-f_Q) \right)_{Q} \right) \psi_S(z)\psi_Q^i (z) .
\end{align}
If, instead, $z\in \Omega_i=\bigcup_{Q:\ell(Q)>2^{-i}} \supp(\psi_Q)$ then there is a cube $S_0$ with $z\in \supp(\psi_{S_0})$ and $\ell(S_0)\geq 2^{-i+1}$. Therefore, any other cube $S$  with $\psi_S(z)\neq 0$ must be a neighbor of $S_0$ and, therefore, it has side-length $\ell(S)\geq 2^{-i}$  (see Section \ref{secNotation}). Therefore,
\begin{align*}
\mathcal{E}^i_2 f(z) 
	& = \sum_{S: \ell(S)= 2^{-i}} \mathcal{T}_m(f-f_S)(z) \psi_S(z) -\sum_{Q: \ell(Q)= 2^{-i}}  \left(\mathcal{T}_m (f-f_Q) \right)_{Q}  \psi_Q^i (z) \\
	& = \sum_{Q: \ell(Q)= 2^{-i}}\left( \mathcal{T}_m(f-f_Q)(z) \psi_Q(z) -\left(\mathcal{T}_m (f-f_Q) \right)_{Q}  \psi_Q^i (z)\right).
\end{align*}
Adding and subtracting $\mathcal{T}_m(f-f_Q)(z)\psi_Q^i (z)$ at each term of this sum, we get 
\begin{align}\label{eqEi1local}
\mathcal{E}^i_2 f(z) 
\nonumber	& = \sum_{Q: \ell(Q)= 2^{-i}}  \mathcal{T}_m(f-f_Q)(z) \left(\psi_Q(z)-\psi_Q^i (z) \right) \\
	& \quad + \sum_{Q: \ell(Q) = 2^{-i}}\left(\mathcal{T}_m(f-f_Q)(z) - \left(\mathcal{T}_m (f-f_Q)\right)_{Q}\right) \psi_Q^i(z).
\end{align}

Summing up, by \rf{eqEi1nonlocal} and \rf{eqEi1local} we have that
\begin{align*}
\mathcal{E}^i_2 f (z)
	& = 	\chi_{\Omega\setminus\Omega_i}(z) \sum_{Q: \ell(Q)= 2^{-i}} \sum_{S: \ell(S)\leq 2^{-i}} \left(\mathcal{T}_m(f-f_S)(z) - \left(\mathcal{T}_m (f-f_Q) \right)_{Q} \right) \psi_S(z)\psi_Q^i (z) \\
	& \quad +	\chi_{\Omega_i\setminus\Omega_{i-1}}(z) \sum_{Q: \ell(Q)= 2^{-i}} \left( \mathcal{T}_m(f-f_Q)(z) - \left(\mathcal{T}_m (f-f_Q) \right)_{Q} \right) \psi_Q^i (z)\\
	& \quad +	\chi_{\Omega_i\setminus\Omega_{i-1}}(z) \sum_{Q: \ell(Q)= 2^{-i}}  \mathcal{T}_m(f-f_Q)(z) \left(\psi_Q(z)-\psi_Q^i (z)\right).
\end{align*}
Since every cube $Q$ with $\ell(Q)= 2^{-i}$ satisfies that $\supp \, \psi_Q^i \subset \mathbf{Sh}(Q)$ and $\supp\, \psi_Q^i\cap \Omega_i\subset 2Q$, we get that
\begin{align}\label{eqBoundEi2well}
|\mathcal{E}^i_2 f (z)|
	& \lesssim \chi_{\Omega\setminus\Omega_{i-1}}(z) \sum_{Q: \ell(Q)= 2^{-i}} \sum_{\substack{S: \ell(S)\leq 2^{-i}\\S\subset \mathbf{Sh}(Q)}} \left|\mathcal{T}_m(f-f_S)(z) - \left(\mathcal{T}_m (f-f_Q) \right)_{Q} \right| \chi_{2S}(z)\\
\nonumber	& \quad  + \chi_{\Omega_i\setminus\Omega_{i-1}}(z) \left|\sum_{Q: \ell(Q)= 2^{-i}}  \mathcal{T}_m(f-f_Q)(z) \left(\psi_Q(z) - \psi_Q^i (z)\right)\right|.
\end{align}

The first term coincides with $\mathcal{E}^i_0 f$. For the last term, just note that whenever $z\in \Omega_i\setminus\Omega_{i-1}$, using the first equality in \rf{eqSumPsi} we have that 
$$\sum_{Q: \ell(Q)= 2^{-i}}  \mathcal{T}_m(f)(z) \left(\psi_Q^i (z)-\psi_Q(z)\right)=  \mathcal{T}_m(f)(z) \left(\sum_{Q: \ell(Q)= 2^{-i}}\psi_Q^i (z)-\sum_{Q: \ell(Q)= 2^{-i}}\psi_Q(z)\right)\equiv 0.$$
Thus, for $z\in \Omega_i\setminus\Omega_{i-1}$
\begin{align*}
\sum_{Q: \ell(Q)= 2^{-i}}  \mathcal{T}_m(f-f_Q)(z) \left(\psi_Q^i (z)-\psi_Q(z)\right)
	& =\sum_{Q: \ell(Q)= 2^{-i}}  -\mathcal{T}_m(f_Q)(z) \left(\psi_Q^i (z)-\psi_Q(z)\right),
\end{align*}
which can be bounded as $\mathcal{E}^i_1$ in \rf{eqBoundE0} by noting the finite overlap of supports of $\psi_Q^i$ for cubes $Q$ of size $2^{-i}$. This fact, together with \rf{eqBreakError}, \rf{eqBoundE0} and \rf{eqBoundEi2well} settles \rf{eqObjectiveError}, that is, 
\begin{equation*}
\norm{\mathcal{E}^i f}_{L^p(\Omega)} \lesssim \norm{\mathcal{E}^i_0 f}_{L^p(\Omega)} + C_{i,\Omega,n,p} \norm{f}_{W^{1,p}(\Omega)},
\end{equation*}
with $C_{i,\Omega,n,p}\xrightarrow{i\to\infty} 0$.

Recall that we defined the modified error term
$$\mathcal{E}^i_0 f (z)= \chi_{\Omega\setminus\Omega_{i-1}}(z) \sum_{Q: \ell(Q)= 2^{-i}} \sum_{\substack{S: \ell(S)\leq 2^{-i}\\S\subset \mathbf{Sh}(Q)}} \left|\mathcal{T}_m(f-f_S)(z) - \left(\mathcal{T}_m (f-f_Q) \right)_{Q} \right|  \chi_{2S}(z).$$ 
Next we prove that $\norm{\mathcal{E}^i_0 f}_{L^p(\Omega)} \lesssim  C_i \norm{f}_{W^{1,p}(\Omega)}$, with $C_i\xrightarrow{i\to\infty} 0$.

Arguing by duality, we have that
\begin{align}\label{eqSumBefore}
\norm{\mathcal{E}^i_0 f }_{L^p}
 = \sup_{g: \norm{g}_{p'}=1}  \int_{\Omega\setminus \Omega_{i-1}} \sum_{\substack{Q: \ell(Q)= 2^{-i}\\S: \ell(S)\leq 2^{-i}\\S\subset \mathbf{Sh}(Q)}} \left|\mathcal{T}_m(f-f_S)(z) - \left(\mathcal{T}_m (f-f_Q) \right)_{Q} \right|  \chi_{2S}(z)\, |g(z)| \, dm(z) .
\end{align}
First note for every pair of Whitney cubes $Q$ and $S$ with $S\subset \mathbf{Sh}(Q)$ and every point $z$, using an admissible chain $[S,Q)=[S,Q]\setminus \{Q\}$ we get that 
\begin{align*}
\mathcal{T}_m(f-f_S)(z) - \left(\mathcal{T}_m (f-f_Q) \right)_{Q} 
	& = \mathcal{T}_m(f-f_S)(z)-\left(\mathcal{T}_m (f-f_S) \right)_{S} \\
	& \quad + \sum_{P\in [S,Q)} \left(\mathcal{T}_m (f-f_P) \right)_{P} - \left(\mathcal{T}_m (f-f_{\mathcal{N}(P)}) \right)_{\mathcal{N}(P)},
\end{align*}
where $\mathcal{N}(P)$ stands for the ``next'' cube in the chain $[S,Q]$ (see Remark \ref{remChain}).
Note that the shadows of cubes of fixed side-length have finite overlapping since $|\mathbf{Sh}(Q)|\approx |Q|$ and, therefore, every Whitney cube $S$  appears less than $C$ times in the right-hand side of \rf{eqSumBefore}. Thus,
\begin{align}\label{eqBreakEi0fAlmost}
\norm{\mathcal{E}^i_0 f }_{L^p}
 			& \lesssim \sup_{g: \norm{g}_{p'}=1} \bigg(  \sum_{S: \ell(S)\leq 2^{-i}} \int_{2S}\left|\mathcal{T}_m(f-f_S)(z) - \left(\mathcal{T}_m (f-f_S) \right)_{S} \right| |g(z)| \, dm(z) \\
\nonumber 	& \quad +   \sum_{\substack{Q: \ell(Q)= 2^{-i}\\S: \ell(S)\leq 2^{-i}\\S\subset \mathbf{Sh}(Q)}}  \sum_{P\in [S,Q)} \left|\left(\mathcal{T}_m (f-f_P) \right)_{P} - \left(\mathcal{T}_m (f-f_{\mathcal{N}(P)})\right)_{\mathcal{N}(P)}\right|  \int_{2S} |g(z)| \, dm(z) \bigg).
\end{align}

All the cubes $P\in [S,Q]$ with  $S\in \SH(Q)$, satisfy that $\ell(P)\lesssim \Dist(Q,S)\approx \ell(Q)$ by Remark \ref{remChain} and Definition \ref{defShadow}. If we assume that $\ell(Q)=2^{-i}$ this implies that $\ell(P)\leq C2^{-i}$. Moreover, we have that 
\begin{align}\label{eqTheMeanIsCool}
\left|\left(\mathcal{T}_m (f-f_P) \right)_{P} - \left(\mathcal{T}_m (f-f_{\mathcal{N}(P)})\right)_{\mathcal{N}(P)}\right| 
& \leq \sum_{L\cap 2P\neq \emptyset}\fint_{P}\left|\mathcal{T}_m (f-f_{P})(z)-\left(\mathcal{T}_m (f-f_L) \right)_{L} \right|\, dm(z).
\end{align}
Finally, we observe that $P\in [S,Q]$ with $S\subset \mathbf{Sh}(Q)$ imply that $\Dist(P,S)\leq C\ell(P)$. Indeed, if $P\in [S,S_Q]$ then this comes from \rf{eqDPQClose}
and, if $P\in[Q_S,Q]$  by \rf{eqDPQClose} we have that $\ell(P)\approx\Dist(P,Q)\geq \ell(Q)$ and by \rf{eqDPQFar} $\ell(Q)\approx\Dist(Q,S)\approx\Dist(P,S)$. Thus, for a fixed $P$ with $\ell(P)\leq C2^{-i}$ and $g\in L^{p'}$, we have that
\begin{equation}\label{eqSumQSP}
\sum_{\substack{Q: \ell(Q)= 2^{-i}\\S:S\subset \mathbf{Sh}(Q)\\ P\in [S,Q]}} \int_{2S} |g(z)| \, dm(z) \lesssim C\sum_{S:\Dist(P,S)\leq C\ell(P)}  \int_{2S} |g(z)| \, dm(z) \lesssim \ell(P)^d \inf_{P} Mg. 
\end{equation}
Note that we again used that every cube $S$ appears less than $C$ times in the left-hand side. By \rf{eqBreakEi0fAlmost}, \rf{eqTheMeanIsCool} and applying \rf{eqSumQSP} after reordering, we get that
\begin{align*}
\norm{\mathcal{E}^i_0 f }_{L^p}
	& \lesssim \sup_{\norm{g}_{p'}=1}  \sum_{\substack{S:\ell(S)\leq C 2^{-i}\\L\cap 2S\neq \emptyset}}  \int_{2S}  \left|\left(\mathcal{T}_m(f-f_S)(z)-\left(\mathcal{T}_m (f-f_L) \right)_{L}\right) \left(|g(z)| +Mg(z)\right)\right| \, dm(z).
\end{align*}
Since $\norm{Mg}_{L^{p'}}\lesssim \norm{g}_{L^{p'}}\leq 1$,  we have that
\begin{equation*}
\norm{\mathcal{E}^i_0 f }_{L^p}
	 \lesssim \sup_{\norm{g}_{p'}=1}  \sum_{(S,L)\in \mathcal{W}_0} \int_{2S}  \left|\mathcal{T}_m(f-f_S)(z)-\left(\mathcal{T}_m (f-f_L) \right)_{L}\right|\left| g(z) \right| \, dm(z),
\end{equation*}
where $\mathcal{W}_0=\{(S,L):\ell(S)\leq C2^{-i}\mbox{ and } 2S\cap L\neq \emptyset\}$.

 For every cube $Q$, let $\varphi_Q$ be a radial bump function with $\chi_{10Q}\leq \varphi_Q\leq \chi_{20Q}$ and the usual bounds in their derivatives. Now we use these bump functions to separate the local and the non-local parts. In the local part we can neglect the cancellation and use the triangle inequality, but in the non-local part the smoothness of a certain kernel will be crucial, so we write
\begin{align}\label{eqBreakEi1}
\norm{\mathcal{E}^i_0 f }_{L^p}
\nonumber	& \lesssim\sup_{\norm{g}_{p'}=1}  \sum_{S:\ell(S)\leq C2^{-i}} \int_{2S}  \left|\mathcal{T}_m[(f-f_S)\varphi_S](z)\right| |g(z)| \, dm(z)\\
\nonumber	& \quad +\sup_{\norm{g}_{p'}=1} \sum_{(S,L)\in\mathcal{W}_0} \fint_{L}  \left|\mathcal{T}_m[(f-f_L)\varphi_S](\xi)\right|\, dm(\xi) \int_{2S} |g|\, dm\\
\nonumber	& \quad +\sup_{\norm{g}_{p'}=1}\sum_{(S,L)\in\mathcal{W}_0} \int_{2S}  \Big|\mathcal{T}_m[(f-f_S)(1 - \varphi_S)](z)-(\mathcal{T}_m [(f-f_L)(1 - \varphi_S)])_{L}\Big| |g(z)| dm(z) \\
			& = \squared{7'}+\squared{7''}+\squared{8}.
\end{align}
Since $\fint_{2S}|g|\, dm\lesssim \inf_{7S}Mg$ and $2L\subset 7S$, we have that 
 $$ \squared{7''} \lesssim \sup_{\norm{g}_{p'}=1} \sum_{(S,L)\in\mathcal{W}_0} \int_{2L}  \left|\mathcal{T}_m[(f-f_L)\varphi_S](\xi)\right| Mg(\xi) \, dm(\xi)  =\squared{7}.$$
Note that the inequality $|g|\leq Mg$ (which is valid almost everywhere for $g$ in $L^1_{loc}$) imply that $\squared{7'}\leq \squared{7}$ as well. 

First we take a look at $\squared{7}$. For any pair of neighbor Whitney cubes $S$ and $L$ and $z\in 2L$, using the definition of weak derivative and Fubini's Theorem we find that
\begin{align*}
\mathcal{T}_m[(f-f_L)\varphi_S](z)
	& =c_n\int_{\Omega^c}\frac{1}{(z-w)^{n+2}}\int_{20S} \frac{(\overline{w-\xi})^{m-1}}{(w-\xi)^{m+1}}(f(\xi)-f_L)\varphi_S(\xi)\, dm(\xi)\, dm(w)\\
	& =c_{n,m}\int_{\Omega^c}\frac{1}{(z-w)^{n+2}}\int_{20S} \frac{(\overline{w-\xi})^{m}}{(w-\xi)^{m+1}}\overline{\partial}[(f-f_L)\varphi_S](\xi)\, dm(\xi)\, dm(w)\\
	& =c_{n,m}\int_{20S} \left(\int_{\Omega^c} \frac{(\overline{w-\xi})^{m}}{(w-\xi)^{m+1}(z-w)^{n+2}} dm(w)\right) \overline{\partial}[(f-f_L)\varphi_S](\xi)\, dm(\xi).
\end{align*}
In the right-hand side above, we have that $\xi,z\in\Omega$. Therefore, we can use Green's Theorem in the integral on $\Omega^c$ and then \rf{eqKernelVectorM} to get 
\begin{align*}
\mathcal{T}_m[(f-f_L)\varphi_S](z)
	& =c_{n,m}\int_{20S} \left(\int_{\partial\Omega} \frac{(\overline{w-\xi})^{m+1}}{(w-\xi)^{m+1}(z-w)^{n+2}} dw\right) \overline{\partial}[(f-f_L)\varphi_S](\xi)\, dm(\xi)\\
	& =c_{n,m} \int_{20S} K_{\vec{m}_0}(z,\xi) \overline{\partial}[(f-f_L)\varphi_S](\xi)\, dm(\xi),
\end{align*}
where $\vec{m}_0:=(2+n,m+1,m+1)$.

Using Proposition \ref{propoKernelExpression} we have that
$$K_{\vec{m}_0}(z,\xi)=c_{m,n}{\partial^{n}}\Beurling\chi_\Omega (z) \frac{(\overline{\xi-z})^{m}}{(\xi-z)^{m+1}}+\sum_{j\leq m} \frac{c_{m,n,j} R_{m+n,j}^{m+1}(z,\xi)}{(\xi-z)^{m+n+2-j}}.$$
The first part is ${\partial^{n}}\Beurling\chi_\Omega (z)$ times the kernel of the operator $T^{(-m-1,m)}$ (see Theorem \ref{theoGeometricPGtr2}).
For the second part, we have that by Lemma \ref{lemKernelBounds} 
$$\frac{|R_{m+n,j}^{m+1}(z,\xi)|}{|\xi-z|^{m+n+2-j}}\lesssim\frac{1}{|\xi-z|^{2-{\sigma_{p}}}},$$
where ${\sigma_{p}}=1-\frac2p$. Thus, 
\begin{align}\label{eqBreak7}
\squared{7}
			& =\sup_{\norm{g}_{p'}=1}  \sum_{(S,L)\in\mathcal{W}_0} \int_{2L}  \left|\mathcal{T}_m[(f-f_L)\varphi_S](z)\right| Mg(z) \, dm(z)\\
\nonumber	& \lesssim\sup_{\norm{g}_{p'}=1}  \sum_{(S,L)\in\mathcal{W}_0} \int_{2L}  \left|{\partial^{n}}\Beurling\chi_\Omega (z) T^{(-m-1,m)}\left(\overline\partial[(f-f_L)\varphi_S]\right)(z)\right| Mg(z) \, dm(z)\\
\nonumber	& \quad +\sup_{\norm{g}_{p'}=1}  \sum_{(S,L)\in\mathcal{W}_0} \int_{2L}   \int_{20S} \frac{ \left|\overline{\partial}[(f-f_L)\varphi_S](\xi)\right|}{|\xi-z|^{2-{\sigma_{p}}}} \, dm(\xi) Mg(z) \, dm(z) = \squared{7.1}+\squared{7.2}.
\end{align}

In the first sum we use that in $W^{1,p}(\C)$ we have the identity $T^{(-m-1,m)}\circ \overline\partial= \overline\partial\circ T^{(-m-1,m)}=c_m \Beurling^m$ and, therefore, $T^{(-m-1,m)}\overline\partial[(f-f_L)\varphi_S]=c_m \Beurling^m [(f-f_L)\varphi_S]\in W^{1,p}\subset L^\infty$, so
\begin{align*}
\squared{7.1}
	& \lesssim\sup_{\norm{g}_{p'}=1}  \sum_{(S,L)\in\mathcal{W}_0} \int_{2L}  \left|{\partial^{n}}\Beurling\chi_\Omega (z) \right| Mg(z) \, dm(z)\norm{ \Beurling^m[(f-f_L)\varphi_S]}_{L^\infty}\\
	& \lesssim\sup_{\norm{g}_{p'}=1}  \sum_{(S,L)\in\mathcal{W}_0} \norm{\partial^n \Beurling \chi_\Omega}_{L^p(2L)}\norm{Mg}_{L^{p'}(2L)} \norm{\Beurling^m[(f-f_L)\varphi_S]}_{W^{1,p}(\C)}.
\end{align*}
By the boundedness of $\Beurling^m$ in $W^{1,p}(\C)$ we have that 
$$\norm{\Beurling^m[(f-f_L)\varphi_S]}_{W^{1,p}(\C)}\lesssim \norm{(f-f_L)\varphi_S}_{W^{1,p}(20S)}.$$
Moreover, the Poincar\'e inequality \rf{eqPoincare} allows us to deduce that
\begin{equation}\label{eqPoincare20S}
\norm{(f-f_L)\varphi_S}_{W^{1,p}(20S)} \lesssim  \norm{\nabla f}_{L^p(20S)}.
\end{equation}

On the other hand, there is a certain $i_0$ such that for $\ell(S)\leq C 2^{-i}$ and $L\cap 2S\neq \emptyset$, we have that $S,2L\subset \Omega\setminus \Omega_{i-i_0}$, and
$$\norm{\partial^n \Beurling \chi_\Omega}_{L^p(2L)}\leq \norm{\partial^n \Beurling \chi_\Omega}_{L^p(\Omega\setminus \Omega_{i-i_0})}.$$
Thus, by the H\"older inequality and the boundedness of the maximal operator in $L^{p'}$ we have that
\begin{align}\label{eq71End}
\squared{7.1}
\nonumber	& \lesssim \norm{\partial^n \Beurling \chi_\Omega}_{L^p(\Omega \setminus\Omega_{i-i_0})} \sup_{\norm{g}_{p'}=1}  \sum_{(S,L)\in\mathcal{W}_0}\norm{Mg}_{L^{p'}(2L)} \norm{\nabla f}_{L^p(20S)} \\
			& \leq C_{\Omega,i} \norm{\nabla f}_{L^p(\Omega)} \sup_{\norm{g}_{p'}=1} \norm{Mg}_{L^{p'}}\lesssim_{p} C_{\Omega,i} \norm{\nabla f}_{L^p(\Omega)},
\end{align}
with $C_{\Omega,i}\xrightarrow{i\to\infty}0$.

To bound the term \squared{7.2} in \rf{eqBreak7}, note that given two neighbor cubes $S$ and $L$ and a point $z\in 2L$, integrating on dyadic annuli we have that
$$\int_{20S} \frac{ \left|\overline{\partial}[(f-f_L)\varphi_S](\xi)\right|}{|\xi-z|^{2-{\sigma_{p}}}} \, dm(\xi) \lesssim M \left(\overline{\partial}[(f-f_L)\varphi_S]\right)(z)\ell(S)^{\sigma_{p}}.$$
Thus, 
\begin{align*}
\squared{7.2}
	& \lesssim\sup_{\norm{g}_{p'}=1}  \sum_{(S,L)\in\mathcal{W}_0} \int_{2L}  M \left(\overline{\partial}[(f-f_L)\varphi_S]\right)(z)\ell(S)^{\sigma_{p}} Mg(z) \, dm(z)\\
	& \lesssim 2^{-i{\sigma_{p}}} \sup_{\norm{g}_{p'}=1}  \sum_{(S,L)\in\mathcal{W}_0} \norm{M \left(\overline{\partial}[(f-f_L)\varphi_S]\right)}_{L^p(\Omega)}\norm{Mg}_{L^{p'}(2L)}
\end{align*}
and, by the boundedness of the maximal operator, \rf{eqPoincare20S} and the H\"older inequality, we get
\begin{align}\label{eq72End}
\squared{7.2}
	& \lesssim 2^{-i{\sigma_{p}}} \sup_{\norm{g}_{p'}=1}  \sum_{(S,L)\in\mathcal{W}_0} \norm{\overline{\partial}[(f-f_L)\varphi_S]}_{L^p(20S)}\norm{Mg}_{L^{p'}(2L)} \lesssim 2^{-i{\sigma_{p}}} \norm{\nabla f}_{L^p(\Omega)}.
\end{align}
By \rf{eqBreak7}, \rf{eq71End} and \rf{eq72End}, we have that
\begin{align}\label{eq7End}
\squared{7}
	& \lesssim C_{\Omega,i} \norm{\nabla f}_{L^p(\Omega)},
\end{align}
with $C_{\Omega,i}\xrightarrow{i\to\infty}0$.

Back to \rf{eqBreakEi1} it remains to bound
\begin{align*}
\squared{8}
	& = \sup_{\norm{g}_{p'}=1} \sum_{(S,L)\in\mathcal{W}_0} \int_{2S}  \left|\mathcal{T}_m[(f-f_S)(1 - \varphi_S)](z)-\left(\mathcal{T}_m [(f-f_L)(1- \varphi_S)] \right)_{L}\right| |g(z)| \, dm(z).
\end{align*}
Fix $g\geq 0$ such that $\norm{g}_{p'}=1$. Then we will prove that
$$\squared{8g}\leq C_{\Omega,i}\norm{f}_{W^{1,p}(\Omega)},$$
with $C_{\Omega,i}\xrightarrow{i\to\infty}0$, where
\begin{align*}
\squared{8g}
	& := \sum_{(S,L)\in\mathcal{W}_0} \int_{2S}  \fint_L \left|\mathcal{T}_m[(f-f_S)(1 - \varphi_S)](z)- \mathcal{T}_m [(f-f_L)(1 - \varphi_S)] (\zeta) \right| \,dm(\zeta) g(z) \, dm(z).
\end{align*}

First, we add and subtract $\mathcal{T}_m[(f-f_L)(1 - \varphi_S)](z)$ in each term of the last sum to get
\begin{align*}
\squared{8g}
	& \leq  \sum_{(S,L)\in\mathcal{W}_0} \int_{2S} \left|\mathcal{T}_m[(f_L-f_S)(1 - \varphi_S)](z) \right| \fint_L\,dm(\zeta) g(z) \, dm(z)\\
	& \quad+  \sum_{(S,L)\in\mathcal{W}_0} \int_{2S}  \fint_L \left|\mathcal{T}_m[(f-f_L)(1 - \varphi_S)](z)- \mathcal{T}_m [(f-f_L)(1 - \varphi_S)] (\zeta) \right| \,dm(\zeta) g(z) \, dm(z).
\end{align*}
For a given $z\in\Omega$,
$$\int_{\Omega^c}\int_{\Omega}\frac{|f(\xi)-f_L|}{|z-w|^{n+2}|w-\xi|^2} \,dm(\xi)\, dm(w)\lesssim \norm{f}_{L^\infty}\int_{\Omega^c}\frac{|\log(\dist(w,\Omega))|+O(1)}{|z-w|^{n+2}} \, dm(w),$$
which is finite since $\Omega$ is a Lipschitz domain (hint: compare the last integral above with the length of the boundary $\mathcal{H}^1(\partial\Omega)$ times the integral $\int_0^1|\log(t)| \, dt$).
Thus, we can use Fubini's Theorem and then Green's Theorem to state that
\begin{align*}
\mathcal{T}_m[(f-f_L)(1 - \varphi_S)](z)
	& = c_n\int_{\Omega^c}\frac{1}{(z-w)^{n+2}}\int_{\Omega} \frac{(\overline{w-\xi})^{m-1}}{(w-\xi)^{m+1}}(f(\xi)-f_L)(1-\varphi_S(\xi))\, dm(\xi)\, dm(w)\\
 	& =c_{n,m}\int_{\Omega} \left(\int_{\partial\Omega} \frac{(\overline{w-\xi})^{m}}{(w-\xi)^{m+1}(z-w)^{n+2}} dw\right) [(f-f_L)(1-\varphi_S)](\xi)\, dm(\xi)\\
	& =c_{n,m} \int_{\Omega} K_{\vec{m}_1}(z,\xi) [(f-f_L)(1-\varphi_S)](\xi)\, dm(\xi),
\end{align*}
where $\vec{m}_1:=(2+n,m+1,m)$. 
Arguing analogously, 
\begin{align*}
\mathcal{T}_m[(f_L-f_S)(1 - \varphi_S)](z)
	& =c_{n,m} (f_L-f_S)\int_{\Omega\setminus 10S} K_{\vec{m}_1}(z,\xi) (1-\varphi_S)(\xi)\, dm(\xi).
\end{align*}
Thus, we get that
\begin{align}\label{eqBreak8AB}
\squared{8g}
\nonumber	& \lesssim  \sum_{(S,L)\in\mathcal{W}_0}  |f_L-f_S| \int_{2S} \left|\int_{\Omega\setminus 10S} K_{\vec{m}_1}(z,\xi) [(1-\varphi_S)](\xi)\, dm(\xi) \right| g(z) \, dm(z)\\
\nonumber	& \quad+  \sum_{(S,L)\in\mathcal{W}_0} \int_{2S}  \fint_L \left|\int_{\Omega}(K_{\vec{m}_1}(z,\xi)- K_{\vec{m}_1}(\zeta,\xi)) [(f-f_L)(1-\varphi_S)](\xi)\, dm(\xi) \right| dm(\zeta) g(z) dm(z)\\
	& =\squared{8.1}+\squared{8.2}.
\end{align}

Recall that Proposition \ref{propoKernelExpression} states that for $z\in 2S$ and $\xi\in\Omega$, 
\begin{equation}\label{eqKernelExplicit}
K_{\vec{m}_1}(z,\xi)=c_{m,n}{\partial^{n}}\Beurling\chi_\Omega (z) \frac{(\overline{\xi-z})^{m-1}}{(\xi-z)^{m+1}}+\sum_{j\leq m} \frac{c_{m,n,j} R_{m+n-1,j}^{m}(z,\xi)}{(\xi-z)^{m+n+2-j}}
\end{equation}
and, for any $z,\xi\in\Omega$, by \rf{eqBoundRmnjm+1} we have that 
\begin{equation}\label{eqBoundLocalRmn1j}
\left|\frac{R_{m+n-1,j}^{m}(z,\xi)}{ (z-\xi)^{m+n+2-j }}\right|\leq C_{\Omega,n,m} \frac{1}{|z-\xi|^{ 3-{\sigma_{p}}}},
\end{equation}
where ${\sigma_{p}}=1-\frac2p$. Thus, by \rf{eqBoundRmnjm} and the identity $\frac{1}{a^j}-\frac{1}{b^j}=\frac{(b-a)\sum_{i=0}^{j-1}a^ib^{j-1-i}}{a^jb^j}$, when $z,\zeta \in 5S$ and $\xi\in\Omega\setminus 20S$ we have that
\begin{align}\label{eqBoundNonlocalRmnjm}
&\left| \frac{R_{m+n-1,j}^{m}(z,\xi)}{(\xi-z)^{m+n+2-j}}-\frac{R_{m+n-1,j}^{m}(\zeta,\xi)}{(\xi-\zeta)^{m+n+2-j}}\right|
\nonumber	\leq \left|R_{m+n-1,j}^{m}(z,\xi)\left(\frac{1}{(\xi-z)^{m+n+2-j}}-\frac{1}{(\xi-\zeta)^{m+n+2-j}}\right)\right|\\
	& \quad +\left|\frac{R_{m+n-1,j}^{m}(z,\xi)-R_{m+n-1,j}^{m}(\zeta,\xi)}{(\xi-\zeta)^{m+n+2-j}}\right|\lesssim_{\Omega,n,m} \frac{|z-\zeta| }{|z-\xi|^{4-{\sigma_{p}}}}+\frac{|z-\zeta|^{\sigma_{p}} }{|z-\xi|^{3}}\lesssim\frac{|z-\zeta|^{\sigma_{p}} }{|z-\xi|^{3}}.
\end{align}
Then, using that $\dist(2S,\supp(1-\varphi_S))>0$, we have that  $\int_{\Omega}\frac{(\overline{\xi-z})^{m-1}}{(\xi-z)^{m+1}} [(1-\varphi_S)](\xi)\, dm(\xi) =c_m\Beurling^m_\Omega [(1-\varphi_S)](z)$ for $z\in 2S$ and, by \rf{eqBreak8AB}, \rf{eqKernelExplicit} and \rf{eqBoundLocalRmn1j} we get that
\begin{align}\label{eqBreak81}
\squared{8.1}
\nonumber	& \lesssim  \sum_{(S,L)\in\mathcal{W}_0} |f_L-f_S|\int_{2S}  \left| \partial^n\Beurling \chi_\Omega(z)\Beurling^m_\Omega [(1-\varphi_S)](z) \right| g(z) \, dm(z)\\
\nonumber	& \quad +  \sum_{(S,L)\in\mathcal{W}_0} |f_L-f_S|\int_{2S} \int_{\Omega\setminus 10S} \frac{1}{|z-\xi|^{ 3-{\sigma_{p}}}} \, dm(\xi) g(z) \, dm(z)\\
	& =\squared{8.1.1}+ \squared{8.1.2}.
\end{align}
By the same token, using \rf{eqBreak8AB}, \rf{eqKernelExplicit} and \rf{eqBoundNonlocalRmnjm} we get 
\begin{align}\label{eqBreak82}
\squared{8.2}
\nonumber	& \lesssim \sum_{(S,L)\in\mathcal{W}_0} \int_{2S}  \fint_L  \left| \partial^n\Beurling \chi_\Omega(z)\Beurling^m_\Omega [(f-f_L)(1-\varphi_S)](z) \right| \,dm(\zeta) g(z) \, dm(z)\\
\nonumber	& \quad + \sum_{(S,L)\in\mathcal{W}_0} \int_{2S}  \fint_L  \left| \partial^n\Beurling \chi_\Omega(\zeta)\Beurling^m_\Omega [(f-f_L)(1-\varphi_S)](\zeta) \right|\,dm(\zeta) g(z) \, dm(z)\\
\nonumber	& \quad + \sum_{(S,L)\in\mathcal{W}_0} \int_{2S}  \fint_L\int_{\Omega\setminus 10S} \frac{|z-\zeta|^{\sigma_{p}}}{|z-\xi|^{ 3}} |f(\xi)-f_L|\, dm(\xi) \,dm(\zeta) g(z) \, dm(z)\\
	& =\squared{8.2.1}+ \squared{8.2.2}+\squared{8.2.3}.
\end{align}

We begin by the first term in the right-hand side of \rf{eqBreak81}, that is, 
\begin{equation*}
\squared{8.1.1}=\sum_{(S,L)\in\mathcal{W}_0} |f_L-f_S|\int_{2S}  \left| \partial^n\Beurling \chi_\Omega(z)\Beurling^m_\Omega [(1-\varphi_S)](z) \right| g(z) \, dm(z).
\end{equation*}
By the Poincar\'e and the Jensen inequalities, we have that
\begin{equation}\label{eqPoincareDoubleMean}
|f_L-f_S|\leq \frac{1}{\ell(L)^2}\int_{L} |f(\xi)-f_S|\, dm(\xi)\lesssim \frac{\ell(L)}{\ell(L)^2}\norm{\nabla f}_{L^1(5S)}\lesssim \ell(S)^{1-\frac{2}{p}} \norm{\nabla f}_{L^p(5S)}.
\end{equation}
On the other hand, applying Lemma \ref{lemBeurlingBump} to $\varphi_S$ (conveniently rescaled and translated), we have that
$\Beurling^m \varphi_S (z)=0$ for $z\in 2S$. Therefore, using \rf{eqPoincareDoubleMean} we have that
\begin{align}\label{eq811End}
\squared{8.1.1}
	&  \lesssim\norm{\nabla f}_{L^p(\Omega)} \sum_{S:\ell(S)\leq C2^{-i}} \ell(S)^{1-\frac{2}{p}}   \int_{2S}  \left| \partial^n\Beurling \chi_\Omega(z)\Beurling^m_\Omega \chi_\Omega (z) \right| g(z) \, dm(z)\\
\nonumber	& \lesssim 2^{-i\left(1-\frac{2}{p}\right)}\norm{\nabla f}_{L^p(\Omega)} \norm{g}_{L^{p'}(\Omega)}\norm{\partial^n\Beurling \chi_\Omega}_{L^p(\Omega)}\norm{\Beurling^m_\Omega \chi_\Omega}_{L^\infty(\Omega)}\lesssim_\Omega 2^{-i(1-\frac{2}{p})}\norm{\nabla f}_{L^p(\Omega)}.
\end{align}

Let us recall that the second term in the right-hand side of \rf{eqBreak81} is
\begin{align*}
\squared{8.1.2}
	& = \sum_{(S,L)\in\mathcal{W}_0} |f_L-f_S|\int_{2S} \int_{\Omega\setminus 10S} \frac{1}{|z-\xi|^{ 3-{\sigma_{p}}}} \, dm(\xi) g(z) \, dm(z)
\end{align*}
and, by \rf{eqPoincareDoubleMean},
\begin{align*}
\squared{8.1.2}
	& \lesssim \sum_{S:\ell(S)\leq C2^{-i}} \ell(S)^{1-\frac{2}{p}} \norm{\nabla f}_{L^p(5S)}  \frac{1}{\ell(S)^{1-{\sigma_{p}}}} \norm{g}_{L^1(2S)} \\
	& \lesssim \sum_{S:\ell(S)\leq C2^{-i}} \ell(S)^{{\sigma_{p}}-\frac{2}{p}+\frac{2}{p}} \norm{\nabla f}_{L^p(5S)}  \norm{g}_{L^{p'}(2S)}. 
\end{align*}
By H\"older's inequality, 
\begin{align*}
\squared{8.1.2}
	& \lesssim 2^{-i{\sigma_{p}}}  \norm{\nabla f}_{L^p(\Omega)}  \norm{g}_{L^{p'}(\Omega)}= 2^{-i{\sigma_{p}}}  \norm{\nabla f}_{L^p(\Omega)}.
\end{align*}
Using this fact together with \rf{eqBreak81} and \rf{eq811End}, we have that
\begin{align}\label{eq81End}
\squared{8.1}
	& \lesssim C_{\Omega,i}  \norm{\nabla f}_{L^p(\Omega)},
\end{align}
with $C_{\Omega,i}\xrightarrow{i\to\infty}0$.

Let us focus now on the first term in the right-hand side of \rf{eqBreak82}, that is,
\begin{align}\label{eq821}
\squared{8.2.1}
	& =\sum_{(S,L)\in\mathcal{W}_0} \int_{2S}   \left| \partial^n\Beurling \chi_\Omega(z)\right|\left|\Beurling^m_\Omega [(f-f_L)(1-\varphi_S)](z) \right| g(z) \, dm(z)\\
\nonumber	& \lesssim \sum_{(S,L)\in\mathcal{W}_0}\norm{g}_{L^{p'}(2S)} \norm{\partial^{n}\Beurling\chi_\Omega}_{L^p(2S)} \norm{\Beurling^{m}_\Omega[(f-f_L)(1-\varphi_S)]}_{L^\infty(2S)}.
\end{align}
 By the Sobolev Embedding Theorem and the boundedness of $\Beurling^m_\Omega$ in $W^{1,p}(\Omega)$ (granted by Theorem \ref{theoGeometricPGtr2}) we have that
\begin{align*}
\norm{\Beurling^{m}_\Omega[(f-f_L)(1-\varphi_S)]}_{L^\infty(\Omega)}
	& \leq \norm{\Beurling^{m}_\Omega[(f-f_L)(1-\varphi_S)]}_{W^{1,p}(\Omega)} \lesssim \norm{(f-f_L)(1-\varphi_S)}_{W^{1,p}(\Omega)}
\end{align*}
and, using Leibniz' rule, Poincar\'e's inequality and the Sobolev embedding Theorem, we get
\begin{align*}
\norm{\Beurling^{m}_\Omega[(f-f_L)(1-\varphi_S)]}_{L^\infty(\Omega)}
	& \leq \norm{\nabla f}_{L^p(\Omega)}+\frac{1}{\ell(S)}\norm{f-f_L}_{L^p(20S)}+\norm{f-f_L}_{L^p(\Omega)}\\
	& \lesssim_\Omega \norm{\nabla f}_{L^p(\Omega)}+\norm{\nabla f}_{L^p(20S)}+\norm{f}_{L^p(\Omega)}+\norm{f}_{L^\infty}\lesssim\norm{f}_{W^{1,p}(\Omega)}.
\end{align*}
Thus, by H\"older's inequality we have that
\begin{align}\label{eq821End}
\squared{8.2.1}
	& \lesssim  \norm{f}_{W^{1,p}(\Omega)}  \norm{g}_{L^{p'}(\Omega)} \norm{\partial^{n}\Beurling\chi_\Omega}_{L^p(\Omega\setminus\Omega_{i-i_0})}=  \norm{f}_{W^{1,p}(\Omega)}  \norm{\partial^{n}\Beurling\chi_\Omega}_{L^p(\Omega\setminus\Omega_{i-i_0})}.
\end{align}
Note that $\norm{\partial^{n}\Beurling\chi_\Omega}_{L^p(\Omega\setminus\Omega_{i-i_0})}\xrightarrow{i\to \infty}0 $.

The second term in \rf{eqBreak82}, that is,  
\begin{equation*}
\squared{8.2.2} = \sum_{(S,L)\in\mathcal{W}_0} \frac{1}{\ell(L)^2} \int_L  \left| \partial^n\Beurling \chi_\Omega(\zeta)\Beurling^m_\Omega [(f-f_L)(1-\varphi_S)](\zeta) \right|\,dm(\zeta)  \int_{2S} g(z) \, dm(z),
\end{equation*}
follows the same pattern. Since $S$ and $L$ in the sum above are neighbors, they have comparable side-length, and for $\zeta\in L$ we have that $ \int_{2S} g(z) \, dm(z)\lesssim \ell(L)^2 Mg(\zeta).$ Therefore, 
\begin{align*}
\squared{8.2.2} 
	& \lesssim \sum_{(S,L)\in\mathcal{W}_0}  \int_L  \left| \partial^n\Beurling \chi_\Omega(\zeta)\Beurling^m_\Omega [(f-f_L)(1-\varphi_S)](\zeta) \right|  Mg(\zeta)\,dm(\zeta)\\
	& \lesssim \sum_{{S:\ell(S)\leq C2^{-i}}}\norm{Mg}_{L^{p'}(5S)} \norm{\partial^{n}\Beurling\chi_\Omega}_{L^p(5S)} \norm{\Beurling^{m}_\Omega[(f-f_L)(1-\varphi_S)]}_{L^\infty(5S)}.
\end{align*}
The last expression coincides with the right-hand side of \rf{eq821} changing $g$ by $Mg$ and $2S$ by $5S$. Arguing analogously to that case, we get that 
\begin{align}\label{eq822End}
\squared{8.2.2}
	& \lesssim  \norm{f}_{W^{1,p}(\Omega)}  \norm{Mg}_{L^{p'}(\Omega)} \norm{\partial^{n}\Beurling\chi_\Omega}_{L^p(\Omega\setminus\Omega_{i-i_0})}\lesssim  \norm{f}_{W^{1,p}(\Omega)}  \norm{\partial^{n}\Beurling\chi_\Omega}_{L^p(\Omega\setminus\Omega_{i-i_0})}.
\end{align}

Finally, we consider 
$$\squared{8.2.3}=\sum_{(S,L)\in\mathcal{W}_0} \int_{2S}  \fint_L\int_{\Omega\setminus 10S} \frac{|z-\zeta|^{\sigma_{p}}}{|z-\xi|^{ 3}} |f(\xi)-f_L|\, dm(\xi) \,dm(\zeta) g(z) \, dm(z).$$
Note that for $z,\zeta\in 5S$ we have that $|z-\zeta|\lesssim \ell(S)$. Separating $\Omega\setminus 10S$ in Whitney cubes we get
\begin{align*}
\squared{8.2.3}
	& \lesssim \sum_{(S,L)\in\mathcal{W}_0} \int_{2S}g(z)dm(z)   \sum_{P\in\mathcal{W}}\frac{\ell(S)^{\sigma_{p}} }{\Dist(S,P)^{3}}\norm{f-f_L}_{L^1(P)}.
\end{align*}
Using the chain connecting two cubes $P$ and $L$, by \rf{eqChain} we get that
$$\norm{f-f_L}_{L^1(P)}\lesssim \sum_{Q\in[P,L]}\norm{\nabla f}_{L^1(5Q)}\frac{\ell(P)^2}{\ell(Q)}.$$
Thus,
\begin{align*}
\squared{8.2.3}
	& \lesssim 2^{-i{\sigma_{p}}} \sum_{L,P\in\mathcal{W}}\sum_{Q\in[P,L]}\frac{\ell(P)^2 \norm{\nabla f}_{L^1(5Q)} \norm{g}_{L^1(7L)}}{\ell(Q)\Dist(L,P)^{3}}.
\end{align*}
By Lemma \ref{lemTwoFoldedFunctionGuay} (with $\rho=1$), we get
\begin{equation}\label{eq823End}
\squared{8.2.3}  \lesssim 2^{-i{\sigma_{p}}}\norm{\nabla f}_{L^p(\Omega)},
\end{equation}
and Claim \ref{claimSmallDifference} is proven. Indeed, by \rf{eqBreak82}, \rf{eq821End}, \rf{eq822End} and \rf{eq823End}, we have that
\begin{equation*}
\squared{8.2} \lesssim C_{\Omega,i}  \norm{\nabla f}_{L^p(\Omega)}.
\end{equation*}
 This fact combined with \rf{eqBreak8AB} and \rf{eq81End} prove that
\begin{equation*}
\squared{8}\leq \sup_{\norm{g}_{p'}=1} \squared{8g} \lesssim\ C_{\Omega,i}  \norm{\nabla f}_{L^p(\Omega)}
\end{equation*}
and, together with \rf{eqObjectiveError}, \rf{eqBreakEi1} and \rf{eq7End}, gives
\begin{equation*}
\norm{\mathcal{E}^i f}_{L^p(\Omega)} \lesssim\ C_{\Omega,i}  \norm{f}_{W^{1,p}(\Omega)},
\end{equation*}
with $C_{\Omega,i}\xrightarrow{i\to\infty}0$.
\end{proof}

\renewcommand{\abstractname}{Acknowledgements}
\begin{abstract}
The author was funded by the European Research
Council under the European Union's Seventh Framework Programme (FP7/2007-2013) /
ERC Grant agreement 320501. Also,  partially supported by grants 2014-SGR-75 (Generalitat de Catalunya), MTM-2010-16232 and MTM-2013-44304-P (Spanish government) and by a FI-DGR grant from the Generalitat de Catalunya, (2014FI-B2 00107).

The author would like to thank Xavier Tolsa's advice on his Ph.D. thesis, which gave rise to this work, Cruz, Mateu, Orobitg and Verdera for their advice and interest, and the editor and the referee for their patient work and valuable comments.
\end{abstract}

\bibliography{../../../bibtex/Llibres}

\newcommand{\etalchar}[1]{$^{#1}$}
\begin{thebibliography}{CFM{\etalchar{+}}09}

\bibitem[AF03]{AdamsFournier}
Robert~A. Adams and John J.~F. Fournier.
\newblock {\em Sobolev spaces}.
\newblock Academic Press, 2\textsuperscript{nd} edition, 2003.

\bibitem[AIM09]{AstalaIwaniecMartin}
Kari Astala, Tadeusz Iwaniec, and Gaven Martin.
\newblock {\em Elliptic partial differential equations and quasiconformal
  mappings in the plane}, volume~48 of {\em Princeton Mathematical Series}.
\newblock Princeton University Press, 2009.

\bibitem[AIS01]{AstalaIwaniecSaksman}
Kari Astala, Tadeusz Iwaniec, and Eero Saksman.
\newblock Beltrami operators in the plane.
\newblock {\em Duke Math. J.}, 107(1):27--56, 2001.

\bibitem[Ast94]{Astala}
Kari Astala.
\newblock Area distortion of quasiconformal mappings.
\newblock {\em Acta Math.}, 173(1):37--60, 1994.

\bibitem[CF12]{CittiFerrari}
Giovanna Citti and Fausto Ferrari.
\newblock A sharp regularity result of solutions of a transmission problem.
\newblock {\em Proc. Amer. Math. Soc.}, 140(2):615--620, 2012.

\bibitem[CFM{\etalchar{+}}09]{ClopFaracoMateuOrobitgZhong}
Albert Clop, Daniel Faraco, Joan Mateu, Joan Orobitg, and Xiao Zhong.
\newblock Beltrami equations with coefficient in the {S}obolev space {$W^{1,
  p}$}.
\newblock {\em Publ. Mat.}, 53(1):197--230, 2009.

\bibitem[CFR10]{ClopFaracoRuiz}
Albert Clop, Daniel Faraco, and Alberto Ruiz.
\newblock Stability of {C}alder{\'o}n's inverse conductivity problem in the
  plane for discontinuous conductivities.
\newblock {\em Inverse Probl. Imaging}, 4(1):49--91, 2010.

\bibitem[CMO13]{CruzMateuOrobitg}
Victor Cruz, Joan Mateu, and Joan Orobitg.
\newblock Beltrami equation with coefficient in {S}obolev and {B}esov spaces.
\newblock {\em Canad. J. Math.}, 65(1):1217--1235, 2013.

\bibitem[CT12]{CruzTolsa}
Victor Cruz and Xavier Tolsa.
\newblock Smoothness of the {B}eurling transform in {L}ipschitz domains.
\newblock {\em J. Funct. Anal.}, 262(10):4423--4457, 2012.

\bibitem[Eva98]{Evans}
Lawrance~C. Evans.
\newblock {\em Partial differential equations}, volume~19 of {\em Graduate
  Studies in Mathematics}.
\newblock Oxford University Press, 1998.

\bibitem[Iwa92]{Iwaniec}
Tadeusz Iwaniec.
\newblock {$L^p$}-theory of quasiregular mappings.
\newblock In {\em Quasiconformal space mappings}, pages 39--64. Springer Berlin
  Heidelberg, 1992.

\bibitem[Jon81]{Jones}
Peter~W. Jones.
\newblock Quasiconformal mappings and extendability of functions in {S}obolev
  spaces.
\newblock {\em Acta Math.}, 147(1):71--88, 1981.

\bibitem[MOV09]{MateuOrobitgVerdera}
Joan Mateu, Joan Orobitg, and Joan Verdera.
\newblock Extra cancellation of even {C}alder{\'o}n--{Z}ygmund operators and
  quasiconformal mappings.
\newblock {\em J. Math. Pures Appl.}, 91(4):402--431, 2009.

\bibitem[Pra15]{PratsPlanarDomains}
Mart{\'\i} Prats.
\newblock Sobolev regularity of the {B}eurling transform on planar domains.
\newblock {\em arXiv: 1507.04334 [math.CA]}, 2015.

\bibitem[PT15]{PratsTolsa}
Mart{\'\i} Prats and Xavier Tolsa.
\newblock A {T(P)} theorem for {S}obolev spaces on domains.
\newblock {\em J. Funct. Anal.}, 268(10):2946--2989, 5 2015.

\bibitem[RS96]{RunstSickel}
Thomas Runst and Winfried Sickel.
\newblock {\em Sobolev spaces of fractional order, {N}emytskij operators, and
  nonlinear partial differential equations}, volume~3 of {\em De Gruyter series
  in nonlinear analysis and applications}.
\newblock Walter de Gruyter; Berlin; New York, 1996.

\bibitem[Sch02]{Schechter}
Martin Schechter.
\newblock {\em Principles of functional analysis}, volume~36 of {\em Graduate
  Studies in Mathematics}.
\newblock American Mathematical Society, 2\textsuperscript{nd} edition, 2002.

\bibitem[Ste70]{SteinPetit}
Elias~M. Stein.
\newblock {\em Singular integrals and differentiability properties of
  functions}, volume~30 of {\em Princeton Mathematical Series}.
\newblock Princeton University Press, 1970.

\bibitem[Tol13]{TolsaSharp}
Xavier Tolsa.
\newblock Regularity of {$C^1$} and {L}ipschitz domains in terms of the
  {B}eurling transform.
\newblock {\em J. Math. Pures Appl.}, 100(2):137--165, 2013.

\bibitem[Tri78]{TriebelInterpolation}
Hans Triebel.
\newblock {\em Interpolation theory, function spaces, differential operators},
  volume~18 of {\em North-Holland Mathematical Library}.
\newblock North-Holland, 1978.

\bibitem[Tri83]{TriebelTheory}
Hans Triebel.
\newblock {\em Theory of function spaces}.
\newblock Birkh{\"a}user, reprint (2010) edition, 1983.

\bibitem[Ver01]{Verdera}
Joan Verdera.
\newblock {$L^2$} boundedness of the {C}auchy integral and {M}enger curvature.
\newblock In {\em Harmonic Analysis and Boundary Value Problems}, volume 277 of
  {\em Contemporary mathematics}, pages 139--158. American Mathematical
  Society, 2001.

\end{thebibliography}
\end{document}